\newcommand{\lvt}{\left|\kern-1.35pt\left|\kern-1.3pt\left|}
\newcommand{\rvt}{\right|\kern-1.3pt\right|\kern-1.35pt\right|}
\newtheorem{thm}{Theorem}[section]
\newtheorem{prop}[thm]{Proposition}
\newtheorem{defn}[thm]{Definition}
\theoremstyle{remark}
 \def\la{{\langle}}
 \def\ra{{\rangle}}
 \def\d{\mathrm{d}}
 \def\e{{\mathrm e}}
 \def\sw{{\mathsf w}}
 \def\sJ{{\mathsf J}}
 \def\sL{{\mathsf L}}
 \def\sP{{\mathsf P}}
 \def\sQ{{\mathsf Q}}
 \def\sR{{\mathsf R}}
 \def\sS{{\mathsf S}}
 \def\sV{{\mathsf V}}
 \def\sU{{\mathsf U}}
 \def\sY{{\mathsf Y}}
 \def\a{{\alpha}}
 \def\b{{\beta}}
 \def\g{{\gamma}}
 \def\l{{\lambda}}
 \def\la{{\langle}}
 \def\ra{{\rangle}}
 \def\ab{{\mathbf a}}
 \def\bb{{\mathbf b}}
 \def\ib{{\mathbf i}}
 \def\jb{{\mathbf j}}
 \def\kb{{\mathbf k}}
 \def\mb{{\mathbf m}}
 \def\CV{{\mathcal V}}
 \def\BB{{\mathbb B}}
 \def\NN{{\mathbb N}}
 \def\PP{{\mathbb P}}
 \def\QQ{{\mathbb Q}}
 \def\RR{{\mathbb R}}
 \def\VV{{\mathbb V}}
 \def\TT{{\mathbb T}}
 \def\one{\mathbf{1}}
\def\lla{\langle{\kern-2.5pt}\langle} 
\def\rra{\rangle{\kern-2.5pt}\rangle}
\newcommand{\wh}{\widehat}
\def\f{\frac}
\begin{document}
 
\title{Monomial and Rodrigues orthogonal polynomials on the cone}

\author{Rab\.{I}a Akta\c{S}}
\address{Ankara University, Faculty of Science, Department of Mathematics, 06100, Tando\u{g}an Ankara, Turkey}
\email{raktas@science.ankara.edu.tr}
\author{Am\'ilcar Branquinho}
\address{Departamento de Matem\'atica, Universidade de Coimbra, 3001-454 Coimbra, Portugal}
\email{ajplb@mat.uc.pt}
\author{Ana Foulqui\'e-Moreno}
\address{Departamento de Matem\'atica, Universidade de Aveiro, 3810-193 Aveiro, Portugal}
\email{foulquie@ua.pt}
\author{Yuan~Xu}
\address{Department of Mathematics, University of Oregon, Eugene, 
OR 97403--1222, USA}
\email{yuan@uoregon.edu} 
\thanks{The first author was partially supported by TUBITAK Research Grant Project \#120F140.
The second author acknowledges Centro de Matem\'atica da Universidade de Coimbra (CMUC)
-- UID/MAT/00324/2020, funded by the Portuguese Government through FCT/MEC and co-funded by the 
European Regional Development Fund through the Partnership Agreement PT2020. 
The third author acknowledges CIDMA Center for Research and Development in Mathematics and Applications
(University of Aveiro) and the Portuguese Foundation for Science and Technology (FCT) within the
project UIDB/MAT/UID/04106/2020 and UIDP/MAT/04106/2020. The fourth author was partially supported by 
Simons Foundation Grant \#849676 and was grateful for an award from the Alexander von Humboldt Foundation.}
\date{\today} 
\subjclass[2010]{33C50, 41A10, 41A63}
\keywords{Orthogonal polynomials, cones, monomial polynomials, Rodrigues formula, Laguerre and Jacobi}
 
\begin{abstract} 
We study two families of orthogonal polynomials with respect to the weight function $w(t)(t^2-\|x\|^2)^{\mu-\f12}$,
$\mu > -\f 12$, on the cone $\{(x,t): \|x\| \le t, \, x \in \mathbb{R}^d, t >0\}$ in $\mathbb{R}^{d+1}$. The first family 
consists of monomial polynomials $\mathsf{V}_{\kb,n}(x,t) = t^{n-|\mathbf{k}|} x^\mathbf{k} + \cdots$ for 
$\mathbf{k} \in \mathbb{N}_0^d$ with $|\mathbf{k}| \le n$, which has the least $L^2$ norm among all polynomials 
of the form $t^{n-|\kb|} x^\kb + \sP$ with $\deg \sP \le n-1$, and we will provide an explicit construction for 
$\mathsf{V}_{\kb,n}$. The second family consists of orthogonal polynomials defined by the Rodrigues type 
formulas when $w$ is either the Laguerre weight or the Jacobi weight, which satisfies a generating function in both cases. 
The two families of polynomials are partially biorthogonal.
\end{abstract} 
 
\maketitle 

\section{Introduction}
\setcounter{equation}{0}

One of the fundamental differences between orthogonal polynomials in one variable and several variables
lies in the multiplicity of orthogonal bases. Let $L^2(\Omega, W)$ be a weighted $L^2$ space on the domain
$\Omega$ in $\RR^d$. For $n =0,1,2,\ldots$, let $\CV_n^d$ denote the space of orthogonal polynomials of 
degree $n$ in $L^2(\Omega, W)$, which consists of polynomials of degree $n$ that are orthogonal to all
polynomials of lower degrees for the inner product of $L^2(\Omega, W)$. For $d =1$, the space 
$\CV_n^d$ is one dimension, so the orthogonal polynomial of degree $n$ is unique up to a constant 
multiple. For $d >1$, the space $\CV_n^d$ has the dimension $\binom{n+d-1}{n}$ and it has infinitely many
distinct bases. Different bases are often needed for different occasions or applications, and each basis may 
catch a specific perspective of the orthogonal structure. One could choose an orthonormal basis and there 
are infinitely many of those. There are also outstanding bases that may not be mutually orthogonal. 

Among the plethora of choices, we single out two that are not orthogonal but are biorthogonal with each other. 
The first basis consists of monomial polynomials
$$
 V_\kb(x) = x^{\kb} + R_\kb(x), \qquad \kb \in \NN_0^d, \quad R_\kb \in \Pi_{n-1}^d, 
$$
where $\Pi_n^d$ denotes the space of polynomials of degree $n$ in $d$ variables, which is uniquely 
determined by the requirement that $\{V_{\kb}: |\kb| = n\}$ is a basis of $\CV_n^d$. This basis 
is not necessarily an orthogonal basis of $\CV_n^d$. One of the important features of this basis is that
$V_\kb$ has the least $L^2$ norm among all polynomials of the form $x^\kb - P$, $P \in \Pi_{n-1}^d$;
more precisely, 
\begin{equation}\label{eq:bestapp}
 \inf_{P\in \Pi_{n-1}^d} \left\| x^\kb - P\right \|_{L^2(\Omega,W)} = \left \|V_\kb \right \|_{L^2(\Omega,W)}.
\end{equation}
The second basis consists of polynomials $U_\kb$, $|\kb| = n$, defined by the Rodrigues type formulas that 
involve repeated differentiation, which can only be defined for special weight functions and domains. The 
study of such bases was initiated for classical orthogonal polynomials on the unit disk and the unit ball $\BB^d$ 
by Hermite and his contemporaries (see, for example, \cite{AF}), where the orthogonality is defined for the weight 
function $(1-\|x\|^2)^{\mu-\f12}$, $\mu > -\f 12$, on $\BB^d$. For the unit ball, these two families of polynomials 
can be given explicitly, satisfy elegant generating functions, and turn out to be mutually 
biorthogonal \cite{AF, DX, EMOT} (see Section 2.2 below). 

In the present paper, we study orthogonal polynomials on the conic domain
\begin{align*}
 \VV^{d+1} = \{(x,t) \in \RR^{d+1}: \, \|x\| \le t, \, x \in \RR^d, \, t \ge 0\},
\end{align*}
on $\RR^{d+1}$ and the orthogonality is defined with respect to the weight function 
$$
 W(x,t) = w(t) (t^2 - \|x\|^2)^{\mu - \f12}, \qquad \mu > -\tfrac12, 
$$
where $w$ is a weight function defined on $\RR_+ = [0,\infty)$ and the cone could be regarded as finite if $w$ 
is supported on the finite interval, say, $[0,1]$. Orthogonal polynomials on the cone are studied only recently.
The two most important cases are the Laguerre polynomials on the cone, with $w(t) = t^\b \e^{-t}$, and the 
Jacobi polynomials on the cone, with $w(t) = t^\b(1-t)^\g$. In these cases, an orthonormal basis is defined
in \cite{X20} and used to show that orthogonal polynomials in $\CV_n^{d+1}$ are eigenfunctions of a second-order
differential operator and they satisfy an addition formula. The latter gives a closed-form formula for the 
reproducing kernel of the space $\CV_n^{d+1}$, which provides essential tools for an extensive study on
approximation theory and computational harmonic analysis over the cone \cite{X21a, X21b}. The study of the 
orthogonal structure over the cone is still 
in its beginning, the present paper aims to explore the monomial basis and the basis defined by the Rodrigues formulas.

Our main work lies in understanding the monomial basis. In the variable $(x,t)\in \RR^{d+1}$, such a basis
takes of the form
$$
 \sV_{\kb,n}(x,t) = t^{n-|\kb|} x^\kb + \sP_{\kb,n} \quad\hbox{with}\quad \sP_{\kb,n} \in \Pi_{n-1}^{d+1}.
$$
Given the nature of the weight function, one may expect that the polynomial $\sV_{\kb,n}$ can be constructed
simply in terms of the monomial polynomials $V_\kb$ on the unit ball and orthogonal polynomial of one 
variable in the $t$ variable. The problem, however, turns out to be more subtle, since $V_\kb$ can only appear in the form of $t^{|\kb|} V_{\kb}(\frac{ 
 x}{t})$, which however is a homogeneous polynomial, far from a 
monomial. Even the case $d=1$, or $\VV^2 \subset \RR^2$, turns out to be non-trivial. One of our main results provides 
an explicit formula for the polynomials $\sV_{\kb,n}$ and its $L^2$ norm. For the Laguerre and the Jacobi cases, 
we also define the second family of polynomials, denoted by $\sU_{\kb,n}$, via the Rodrigues type formulas.
The polynomials $\sU_{\kb,n}$ for $|\kb| =n$ is also a basis of $\CV_n^{d+1}$ and they satisfy a generating 
function. The two families, $\{\sV_{\kb,n}\}$ and $\{\sU_{\kb,n}\}$, are, however, only partially biorthogonal, in 
contrast to the case of the unit ball.

The paper is organized as follows. The next section is preliminary, where we recall the results of classical 
orthogonal polynomials on the unit ball and develop orthogonal polynomials on the cone. The monomial 
polynomials on the cone, called the $V$-family, are defined and studied in the third section, where we first 
consider the case $d =1$ to illustrate the idea. The second family of polynomials, the $U$-family, is defined 
via Rodrigues formulas in the fourth section and shown to be partially biorthogonal to the $V$-family and, in 
$d =1$ case, a basis that is biorthogonal to the $V$-family is explicitly constructed in terms of the $U$-family. 
Finally, we discuss generating functions for these polynomials in the fifth~section. 

\section{Preliminary}
\setcounter{equation}{0}

In the first subsection, we discuss orthogonal polynomials, especially monomial orthogonal polynomials, in
general. In the second section, we illustrate the general setup by recalling results from classical orthogonal 
polynomials on the unit ball, which will also be needed in our study. In the last subsection, we lay down 
the basics for orthogonal polynomials on the cone. 

Throughout this paper, we use multi-index notation: for $\kb= (k_1,\ldots, k_d) \in \NN_0^d$ and 
$\ab=(a_1,\ldots,a_d)\in \RR^d$, we shall write, for example, 
$$
 \kb ! = k_1!\cdots k_d!, \qquad (\ab)_\kb = (a_1)_{k_1} \cdots (a_d)_{k_d}, \qquad \sum_{\jb \le \kb} A_\jb 
 = \sum_{j_1=0}^{k_1}\cdots \sum_{j_d=0}^{k_d} A_{\jb},
$$
where $(a)_k = a(a+1)\cdots (a+k-1)$ denotes the Pochhammer symbol. 

\subsection{Monomial orthogonal polynomials} 
Let $\sw$ be a weight function on a domain $\Omega \subset \RR^d$. Let $\la \cdot,\cdot \ra_{\sw}$ be the inner product defined by 
$$
 \la f, g\ra_{\sw} = \int_\Omega f(x) g(x) \sw(x) \d x.
$$
Let $\Pi_n^d$ denote the space of polynomials of degree at most $n$ in $d$ variables. A polynomial~$P$ of degree $n$ 
is an orthogonal polynomial with respect to the inner product~if 
$$
 \la P, Q\ra_{\sw} = 0, \qquad \forall Q \in \Pi_{n-1}^d.
$$
Assume that $\sw$ is regular so that orthogonal polynomials with respect to the inner product are well-defined. 
Let $\CV_n^d(\Omega, \sw)$ be the space of orthogonal polynomials of degree $n$ in $d$ variables with respect 
to the inner product. It is known that 
$$
 r_n^d:= \dim \CV_n^d(\Omega, \sw) = \binom{n+d-1}{n}, \qquad n =0,1,2,\ldots. 
$$
For $d > 1$, the space $\CV_n^d(\Omega, \sw)$ contains infinitely many
bases. Moreover, since orthogonality is defined as orthogonal to all polynomials of lower degrees, elements of a 
basis may not be mutually orthogonal. A basis $\{P_j^n: 1 \le j \le r_n^d\}$ of $\CV_n^d(\Omega, \sw)$ 
is called an orthogonal basis if $\la P_j^n, P_k^n\ra_\sw = 0$ whenever $j \ne k$ and it is called an orthonormal 
basis if, in addition, $\la P_j^n, P_j^n\ra_\sw =1$ for all $j$. It is often convenient to consider an orthonormal basis 
for many applications, such as dealing with the Fourier orthogonal series. Notice, however, that if 
$\PP_n =\{P_j^n: 1 \le j \le r_n^d\}$ is an orthonormal basis, then so is $Q \PP_n$ if $\PP_n$ is regarded as 
a column vector and $Q$ is an orthogonal matrix of size $\binom{n+d-1}{n}$. Hence, there are infinitely
many distinct orthonormal bases. Our goal in this paper is to consider the monomial basis 
$\{V_\kb: |\kb| = n, \kb \in \NN_0^d\}$, where $V_\kb \in \CV_n^d(\Omega, \sw)$ is characterized by 
$$
 V_\kb (x) = x^\kb + P_\kb, \qquad P_\kb \in \Pi_{n-1}, \quad n = |\kb|;
$$
in other words, the polynomial $V_\kb$ of degree $n$ contains a single monomial, $x^\kb$, of degree $n$. 
The monomial orthogonal polynomial $V_\kb$ is the error of the least square approximation to the monomial $x^\kb$
from $\Pi_{|\kb| -1}^d$; more precisely, it satisfies \eqref{eq:bestapp} since $V_\kb = x^\kb + P_{\kb}$ is 
orthogonal to all polynomials in $\Pi_{n-1}^d$. Moreover, in the case of several families of classical orthogonal polynomials, 
such as those on the unit ball or the standard simplex, the monomial orthogonal polynomials possess fine 
structures and are biorthogonal to another family of orthogonal polynomials defined by the Rodrigues type 
formulas. We illustrate this in the next subsection. 

\subsection{Orthogonal polynomials on the unit ball}
For $\mu > -\f12$, the classical weight function on the unit ball $\BB^d = \{x\in \RR^d: \|x\| \le 1\}$ of $\RR$ is 
\begin{align*}
 \sw_\mu(x) = (1-\|x\|^2)^{\mu - \f12}, \quad \mu > -\tfrac12, \quad x\in \BB^d,
\end{align*}
the normalization constant $b_\mu^\BB$, so that $b_\mu^\BB \sw_\mu(x)$ has the unit integral, is given by 
$$
 b_\mu^\BB = \frac{1}{\int_{\BB^d} \sw_\mu(x) \d x} = \frac{\Gamma(\mu+\frac{d+1}{2})}{\pi^{\frac d 2}\Gamma(\mu+\frac{1}{2})}.
$$

When $d =1$, the ball is $[-1,1]$ and the associated orthogonal polynomials are the classical Gegenbauer 
polynomials $C_n^\mu$, which satisfy the orthogonal relation 
\begin{equation} \label{eq:Gegen}
 c_\mu \int_{-1}^1 C_n^\mu(t)C_m^\mu(t)(1-t^2)^{\mu-\f12} \d t = h_n^\mu \delta_{n,m}, \qquad 
 h_n^\mu = \frac{\mu (2\mu)_n}{(n+\mu)n!},
\end{equation}
where $c_\mu =\frac{ \Gamma(\mu+1)}{\sqrt{\pi} \Gamma(\mu+\f12)}$ is the same as $b_\mu^{\BB}$ with $d =1$
and $\mu \ne 0$. If $\mu = 0$, the polynomial $C_n^\mu$ becomes the Chebyshev polynomial $T_n$ under the limit
$$
\lim_{\mu \to 0} \frac 1 \mu C_n^\mu(x) = \frac{2}{n}T_n(x), \qquad n \ge 1. 
$$

For $d > 1$, the orthogonal polynomials with respect to $\sw_\mu$ on $\BB^d$ are well studied; see~\cite[Section 5.2]{DX}. Let $\CV_n(\BB^d, \sw_\mu)$ be the space of OPs for this weight function. There are 
many distinct bases for this space and two orthonormal bases can be given explicitly in either Cartesian or polar 
coordinates. We are mostly interested in the monomial basis and the basis defined by the Rodrigues formula. While
the former has been discussed before, the latter is an extension of the Rodrigues formula for classical orthogonal
polynomials in one variable to several variables; see, for instance,
\cite{AF, AFPP, DX, EMOT, IZ, X05b}, for some examples.
For the unit ball, both bases can be traced back to the work of Hermite \cite{AF}. Following the classical presentation (cf. \cite[Chapter~12]{EMOT} and \cite[Chapter 5]{DX}), we denote the elements of these 
two bases by $V_\kb$ and $U_\kb$ with $\kb \in \NN_0^d$, respectively. 

We consider the $U$-basis given by the Rodrigues formula first. For each $\kb \in \NN_0^d$, the polynomials
$U_\kb$ is defined by 
\begin{equation}\label{eq:RodrigB}
 U_{\kb}(x) = \frac{(-1)^{|\kb|} (2\mu)_{|\kb|}}{2^{|\kb|}(\mu + \f12)_{|\kb|}\kb !}
 \frac{1}{(1-\|x\|^2)^{\mu-\f1 2}} \frac{\partial^{|\kb|}}{\partial_1^{\kb_1} \cdots \partial_d^{\kb_d} }
 (1- \|x\|^2)^{|\kb|+\mu-\f12}. 
\end{equation}
Then $\{U_\kb: |\kb| = n\}$ is a basis of $\CV_n(\BB^d,\sw_\mu)$. These polynomials also satisfy a 
generating function given by
\begin{equation}\label{eq:generatingB}
 \frac{1}{\left((1- \la \ab ,x \ra)^2 + \|\ab \|^2 (1-\|x\|^2)\right)^{\mu}} = \sum_{\kb \in \NN_0^d}
 U_\kb(x) \ab^\kb, \quad \ab \in \BB^d.
\end{equation}
The $V$-basis is the monomial basis, For each $\kb \in \NN_0^d$, the polynomial $V_\kb$ is defined by 
\begin{equation} \label{eq:V-monic1}
 V_{\kb}(x) = x^\kb F_B\left(-\frac{\kb}{2}, \frac{\one -\kb}{2}; 
 -|\kb|- \mu - \frac{d-3}{2}; \frac{1}{x_1^2},\ldots,\frac{1}{x_d^2}\right),
\end{equation}
where $\one = (1,\ldots,1)\in \NN^d$ and $F_B$ is the Lauricella hypergeometric series of $d$ variables 
defined by
\begin{align*}
 F_B(\ab, \bb; c; x) = \sum_{\mb \in \NN_0^d} \frac{(\ab)_\mb (\bb)_\mb}{(c)_{|\mb|} \mb!} x^\mb, \qquad 
 \max_{1\le j \le d} |x_j| < 1.
\end{align*}
It is easy to see that $V_\kb(x) = x^\kb + Q_\kb$ with $Q_\kb \in \Pi_{n-1}^d$, so that $V_\kb$ is monomial for 
$\kb \in \NN_0^d$. Moreover, $\{V_\kb: |\kb|=n\}$ is a basis for $\CV_{|\kb|} (\BB^d,\sw_\mu)$. These polynomials 
satisfy a generating function given by
\begin{equation}\label{eq:generatingBV}
 \frac{1}{\left(1-2 \la \ab ,x \ra + \|\ab \|^2\right)^{\mu+\f{d-1}2}} = \sum_{\kb \in \NN_0^d}
 \frac{2^{|\kb|}}{\kb!} \left(\mu+ \frac{d-1}{2}\right)_{|\kb|} V_\kb(x) \ab^\kb, \quad \ab \in \BB^d.
\end{equation}
As we mentioned before, the polynomial $V_\kb$ has the least $L^2$ norm among all polynomials of the form
$x^\kb - P(x)$, $P\in \Pi_{n-1}^d$. Its norm is computed in \cite[Corollary~4.8]{X05}, which shows in particular 
that, for $\kb \in \NN_0^d$ and $|\kb| =n$, 
\begin{equation} \label{eq:leastBall}
 \inf_{P\in \Pi_{n-1}^d} \left \| x^\kb - P \right\|_{L^2(\BB^d, \sw_\mu)}^2
 = \frac{\l \kb!}{2^{n-1}(\l)_n} \int_0^1 \prod_{i=1}^d P_{k_i}(t) t^{n+2 \l -1} \d t,
\end{equation}
where $\l = \mu + \frac{d-1}{2} > 0$ and $P_n$ is the Legendre polynomial of degree $n$, which implies,
in particular, that the integral on the right-hand side is positive, a fact that is not obvious. For $d =2$, 
a different expression for this quantity is given in \cite{BHN}. For small $n$, the least square of some
symmetric monomials are given in \cite{AY}, which can be deduced from the norm of $V_{\kb}$, see \cite{X05}.
Orthogonal polynomials are also used in deriving the exact constant in the Bernstein inequality on the ball in the recent work of \cite{K}.

Neither $U$-basis nor $V$-basis are orthogonal bases for $\CV_n(\BB^d, \sw_\mu)$. They are, however, biorthogonal
with respect to each other. More precisely, they satisfy
$$
 b_\mu^\BB \int_{\BB^d} U_\kb(x) V_\jb(x) \sw_\mu(x) \d x 
 =\f{ (2\mu)_{|\kb|}}{ 2^{|\kb|}(\mu+\f{d+1}2)_{|\kb|}}\delta_{\jb,\kb}.
$$

We will need the explicit expression of $V_\kb(x)$ in monomials, which is derived by rewriting \eqref{eq:V-monic1}
in the following form
\begin{equation} \label{eq:V-monic}
V_{\kb}(x) = \sum_{2 \jb \leq \kb} b_{\kb,\jb}^\mu x^{\kb- 2 \jb}, \quad \hbox{where}\quad
 b_{\kb,\jb}^\mu:=\frac{(\frac{-\kb}{2})_{\jb} (\frac{-\kb+1}{2})_{\jb}}{(-|\kb|-\lambda +1)_{|\jb|} \jb!},
\end{equation}
and we shall denote, throughout the rest of the paper, 
$$
 \l = \mu + \frac{d-1}{2}.
$$
The relation \eqref{eq:V-monic} can be reversed, as seen in \cite[Corollary 6.2]{X05}, as 
\begin{equation} \label{eq:monic-V}
 x^{\kb} = \sum_{2 \jb \leq \kb} c_{\kb, \jb}^\mu V_{\kb-2 \jb} (x), 
 \end{equation}
where the coefficients are given by
\begin{equation} \label{eq:monic-ckj}
 c_{\kb, \jb}^\mu = \frac{(-1)^{|\jb|} (-\frac{\kb}{2})_{\jb} (\frac{-\kb+\one}{2})_{\jb}}{(-|\kb| -\lambda+1)_{2|\jb|}\, \jb!}
 \left(2 (-|\kb| -\lambda+1)_{|\jb|} - (-|\kb| - \lambda)_{|\jb|} \right).
 \end{equation} 
Putting the two identities together, we obtain 
\begin{align*}
 x^{\kb} & = \sum_{2 \ib \leq \kb} c_{\kb,\ib}^\mu V_{\kb-2 \ib} (x)= \sum_{2 \ib \leq \kb} c_{\kb,\ib}^\mu 
 \sum_{2 \jb \leq \kb - 2 \ib} b_{\kb-2\ib, \jb}^\mu x^{\kb - 2 \ib-2 \jb}
 \\
 & = \sum_{2 \ib \leq \kb} c_{\kb,\ib}^\mu \sum_{2 \jb \leq \kb} b_{\kb-2\ib, \jb-\ib}^\mu x^{\kb-2 \jb}
 \\
 & = \sum_{2 \jb \leq \kb} x^{\kb-2 \jb} \sum_{\ib \leq \jb} c_{\kb,\ib}^\mu b_{\kb-2\ib, \jb-\ib}^\mu 
\end{align*}
Consequently, it follows readily that 
\begin{equation} \label{eq:bc=delta}
 \sum_{\ib \leq \jb} b_{\kb, \ib}^\mu c_{\kb - 2 \ib, \jb-\ib}^\mu = \delta_{\jb,0}, \quad j \in \mathbb N_0.
\end{equation}
This identity will be used in the sequel. 
 
\subsection{Orthogonal polynomials on the cone}
Let $w$ be a weight function on an interval in $\RR$. We can assume that the interval is either $[0, 1]$ or 
$\RR_+ = [0,\infty)$ without losing generality. On the cone $\VV_0^{d+1}$, we define 
\begin{equation}\label{eq:Wmu-cone}
 W_{\mu}(x,t) = w(t) (t^2-\|x\|^2)^{\mu-\f12}, \quad \mu > -\tfrac12 ,
\end{equation}
and define the inner product 
$$
 \la f,g\ra_{\mu}: = b_\mu \int_{\VV^{d+1}} f(x,t) g(x,t) W_{\mu}(x,t) \d x \d t,
$$
where $b_\mu$ is a normalization constant so that $\la 1,1\ra_{\mu} = 1$ and 
$$
 b_\mu = b_\mu^\BB \times b_{\mu}^w \quad\hbox{with}\quad b_{\mu}^w = \frac{1}{\int_0^\infty t^{d+2\mu-1} w(t) \d t }.
$$
This can be easily verified by using the separation of variables 
$$
 \int_{\VV^{d+1}} f(x,t) W_{\mu}(x,t) \d x \d t= \int_0^\infty \int_{\BB^d} f(t y, t)(1-\|y\|^2)^{\mu - \f12} \d y 
 \, t^{d+2\mu-1} w(t) \d t.
$$
Let $\CV_n(\VV^{d+1}, W_{\mu})$ be the space of orthogonal polynomials of degree $n$ in $d+1$ variables 
with respect to the inner product $\la f,g\ra_{\mu}$. A basis of this space can be given in terms of orthogonal
polynomials on the unit ball and a family of orthogonal polynomials in one variable. 

\begin{prop} \label{prop:OPcone}
Let $\PP_m = \{P_\kb: |\kb| = m\}$ be a basis of $\CV_m(\BB^d, \sw_\mu)$ for $m \le n$ and let $q_{n-m}^\a$ 
be an orthogonal polynomial in one variable with respect to the weight function $t^\a w(t)$ on $\RR_+$. Define \footnote{Throughout this paper, we will adopt the convention that all orthogonal polynomials on the cone will be
denoted by letters in the sans serif font, such as $\sQ_{\kb}$, $\sS_{\kb}$, and $\sV_{\kb}$. }
\begin{equation} \label{eq:sQcone}
 \sQ_{\kb,n} (x,t) = q_{n-m}^{\a_m}(t) t^m P_\kb\!\left(\frac{x}{t}\right), \qquad |\kb| =m, \quad 0 \le m \le n,
\end{equation}
where $\a_m = d+2m+2 \mu-1$. Then $\QQ_n = \{ \sQ_{\kb,n}: |\kb| =m, \, 0 \le m \le n\}$ is a basis of $\CV_n(\VV^{d+1}, W_\mu)$. 
In particular, if $\PP_m$ is an orthogonal basis for $\CV_m(\BB^d, \sw_\mu)$ for $0 \le m \le n$, then $\QQ_n$ 
is an orthogonal basis for $\CV_n(\VV^{d+1}, W_\mu)$. 
\end{prop}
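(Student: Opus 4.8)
The plan is to verify: (i) each $\sQ_{\kb,n}$ is a nonzero polynomial of degree at most $n$; (ii) the family $\QQ_n$ has the correct cardinality; (iii) $\sQ_{\kb,n}\perp\Pi_{n-1}^{d+1}$, which together with (i) forces $\deg\sQ_{\kb,n}=n$ and $\sQ_{\kb,n}\in\CV_n(\VV^{d+1},W_\mu)$; and (iv) $\QQ_n$ is linearly independent; the refinement will then follow from a short extra computation. For (i): since $P_\kb$ has degree $m$, the function $t^m P_\kb(x/t)=\sum_{|\a|\le m}c_\a\,x^\a t^{m-|\a|}$ is a (homogeneous, degree $m$) polynomial in $(x,t)$ equal to $P_\kb$ at $t=1$, so multiplying by the degree-$(n-m)$ polynomial $q_{n-m}^{\a_m}(t)$ gives $\sQ_{\kb,n}\in\Pi_n^{d+1}$; it is nonzero because it is a product of the nonzero polynomials $q_{n-m}^{\a_m}(t)$ and $t^mP_\kb(x/t)$ in the integral domain $\RR[x,t]$. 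For (ii), the number of admissible pairs $(\kb,m)$ equals $\sum_{m=0}^n\binom{m+d-1}{m}=\binom{n+d}{n}=\dim\CV_n(\VV^{d+1},W_\mu)$ by the hockey-stick identity, so once (iii) and (iv) hold, $\QQ_n$ is automatically a basis.

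For (iii) I would take an arbitrary $R\in\Pi_{n-1}^{d+1}$ and evaluate $\la\sQ_{\kb,n},R\ra_\mu$ through the separation-of-variables identity recalled above, i.e.\ substituting $x=ty$ to obtain an iterated integral, over $y\in\BB^d$ against $(1-\|y\|^2)^{\mu-\f12}$ and over $t\in\RR_+$ against $t^{d+2\mu-1}w(t)$. Writing $R(ty,t)=\sum r_{\b,i}\,y^\b t^{|\b|+i}$ with $|\b|+i\le n-1$, the orthogonality of $P_\kb$ to $\Pi_{m-1}^d$ on the ball annihilates every term with $|\b|<m$, leaving the inner integral equal to $t^m g(t)$ with $\deg g\le n-1-m$. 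Since $\a_m=2m+d+2\mu-1$, the remaining $t$-integral is a constant multiple of $\int_0^\infty q_{n-m}^{\a_m}(t)\,g(t)\,t^{\a_m}w(t)\,\d t=0$, using that $q_{n-m}^{\a_m}$ is orthogonal to $\Pi_{n-m-1}$ with respect to $t^{\a_m}w(t)$. Hence $\sQ_{\kb,n}\perp\Pi_{n-1}^{d+1}$; since a nonzero polynomial orthogonal to all of $\Pi_{n-1}^{d+1}$ cannot itself lie in $\Pi_{n-1}^{d+1}$, we conclude $\deg\sQ_{\kb,n}=n$ and $\sQ_{\kb,n}\in\CV_n(\VV^{d+1},W_\mu)$.

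For (iv), suppose $\sum_{m=0}^n\sum_{|\kb|=m}c_\kb\,\sQ_{\kb,n}=0$; substituting $x=ty$ converts this into the polynomial identity $\sum_{m=0}^n q_{n-m}^{\a_m}(t)\,t^m\,P^{(m)}(y)=0$ with $P^{(m)}:=\sum_{|\kb|=m}c_\kb P_\kb\in\CV_m(\BB^d,\sw_\mu)$. The only step requiring a genuine idea — a naive comparison of degrees in $t$ fails, since each $q_{n-m}^{\a_m}(t)t^m$ has degree exactly $n$ — is to separate the levels $m$ by orthogonality on the ball: integrating the identity over $\BB^d$ against $P^{(m_0)}(y)(1-\|y\|^2)^{\mu-\f12}$ and using $\CV_m(\BB^d,\sw_\mu)\perp\CV_{m_0}(\BB^d,\sw_\mu)$ for $m\ne m_0$ leaves $q_{n-m_0}^{\a_{m_0}}(t)\,t^{m_0}\bigl(\int_{\BB^d}|P^{(m_0)}(y)|^2(1-\|y\|^2)^{\mu-\f12}\,\d y\bigr)=0$ for all $t$; since $q_{n-m_0}^{\a_{m_0}}(t)\,t^{m_0}\not\equiv0$, this forces $P^{(m_0)}=0$, hence $c_\kb=0$ for all $|\kb|=m_0$, and $\QQ_n$ is a basis. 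For the refinement, the same substitution gives, for $|\kb|=m$ and $|\jb|=m'$, that $\la\sQ_{\kb,n},\sQ_{\jb,n}\ra_\mu$ equals $b_\mu\bigl(\int_0^\infty q_{n-m}^{\a_m}(t)q_{n-m'}^{\a_{m'}}(t)\,t^{m+m'+d+2\mu-1}w(t)\,\d t\bigr)\cdot\int_{\BB^d}P_\kb(y)P_\jb(y)(1-\|y\|^2)^{\mu-\f12}\,\d y$, and the ball integral vanishes whenever $m\ne m'$ (different degrees) or $m=m'$ with $\kb\ne\jb$ (the orthogonal-basis hypothesis), so $\QQ_n$ is then an orthogonal basis. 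I anticipate no serious obstacle beyond the routine checks that $x=ty$ is a legitimate specialization of a polynomial identity and that $q_{n-m}^{\a_m}(t)\,t^m\not\equiv0$.
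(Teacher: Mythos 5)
Your proof is correct and rests on the same separation-of-variables computation (substituting $x=ty$ and splitting the inner product into a ball integral against $\sw_\mu$ and a $t$-integral against $t^{\a_m}w(t)$) that the paper uses. If anything, your version is more complete: the paper only computes $\la \sQ_{\kb,n},\sQ_{\kb',n'}\ra_\mu$ among the $\sQ$'s themselves and leaves the linear independence and the spanning of $\Pi_{n-1}^{d+1}$ by lower-degree members implicit, whereas you verify orthogonality against an arbitrary $R\in\Pi_{n-1}^{d+1}$ directly and supply the dimension count and the independence argument explicitly.
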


\begin{proof}
This can be easily seen from 
\begin{align*}
 &
\la \sQ_{\kb,n}, \sQ_{\kb',n'} \ra_\mu 
\\
 & \phantom{olaola} = b_\mu \int_0^\infty t^{d+m+m'+2\mu-1}q_{n-m}^{\a_m}(t) q_{n'-m'}^{\a_{m'}}(t)
 \int_{\BB^d} P_\kb^m(y) P_{\kb'}^{m'}\!(y) \sw_\mu (y) \d y w(t) \d t \\
 & \phantom{olaola} = b_\mu \int_0^\infty \!q_{n-m}^{\a_m}(t) q_{n'-m}^{\a_m}(t)t^{\a_m} w(t) \d t 
 \! \int_{\BB^d} \! P_\kb^m(y) P_{\kb'}^{m}(y) \sw_\mu (y) \d y \, \delta_{m,m'} \\
 & \phantom{olaola}
= {\| q_{n-m}^{\a_m} \|^2} b_\mu^{\mathbb B} \int_{\BB^d} P_\kb^m(y) P_{\kb'}^{m}(y) \sw_\mu (y) \d y\,
 \delta_{m,m'}\delta_{n,n'},
\end{align*} 
where we have used the orthogonality of $P_\kb^m$. Moreover, if $\PP_m$ is an orthogonal basis,
then the last integral can be replaced by a constant multiple of $\delta_{\kb,\kb'}$ so that the basis~$\QQ_n$
is an orthogonal basis on the cone. 
\end{proof}

What we are interested in is the monomial basis for $\CV_n(\VV^{d+1}, W_\mu)$. Monomials of degree $n$
in $(x,t)$ variables are $t^{n- |\kb|} x^\kb$ for $\kb \in \NN_0^d$ and $0 \le |\kb| \le n$. We assume that the
monomial orthogonal polynomial $\sV_{\kb,n}$ is of the form 
\begin{equation} \label{eq:Vcone-def}
 \sV_{\kb,n}(x,t) = t^{n-|\kb|} x^\kb + \sR_{\kb}(x,t), \qquad \sR_\kb \in \Pi_{n-1}^d.
\end{equation}
Given the Proposition \ref{prop:OPcone}, it is tempting to consider orthogonal polynomials 
\begin{align} \label{eq:S-OPcone}
 \sS_{\kb,n}(x,t):= q_{n- m}^{\a_{m}}(t) t^{m} V_\kb \left(\f{x}{t} \right), \quad |\kb| =m, \,\, \kb\in \NN_0^d, \,\,
 0\le m \le n,
\end{align}
where $V_\kb$ denotes the monomial orthogonal polynomials on the unit ball, defined in~\eqref{eq:V-monic1},
and $q_{n-m}^{\a_m}$ is the monic orthogonal polynomial in the $t$ variable. However, although both 
$q_{n-m}^{\a_m}$ and $V_\kb$ are monomial polynomials, the polynomials $\sS_{\kb,n}$ are monomial only when
$\kb = 0$ since the polynomial $t^{|\kb|} V_\kb \left(\f{x}{t} \right)$ is a homogeneous polynomial of degree $m$
in the $(x,t)$ variables when $|\kb| > 0$, hence not monomial. The polynomials~$\sS_{\kb,n}$ nevertheless play 
an important role in our construction of the monomial basis in the next section. 

For our study, the most interesting cases are when $w$ is a classical weight function, for which $q_{n-m}^{\a_m}$
can be given explicitly in classical orthogonal polynomials. This comes down to two families of classical weight 
functions, which lead to the Laguerre polynomials on the cone and the Jacobi polynomials on the cone studied 
in \cite{X20}. We review these two families of polynomials below. 

\subsubsection{Laguerre polynomials on the cone}
In this case, the cone is unbounded, 
$$
 \VV^{d+1} = \left \{(x,t): \|x\| \le t, \,\, x\in \RR^d, \,\, 0 \le t < \infty \right\} ,
$$
and the weight function $w(t) = t^\b \e^{-t}$, so that $W_\mu$ in \eqref{eq:Wmu-cone} becomes 
\begin{equation*}
W_{\b,\mu}(x,t) = (t^2-\|x\|^2)^{\mu-\f12} t^\b \e^{-t}, \quad \mu > -\tfrac12, \quad \b > -d,
\end{equation*}
and its normalization constant becomes $b_{\b,\mu}^L$ given by 
\begin{equation*}
 b_{\b,\mu}^L = \frac{1}{\Gamma(2\mu + \b+d)} b_\mu^\BB. 
\end{equation*}

Recall that the Laguerre polynomial $L_n^\a$ is defined by, for $\a > -1$, 
$$
 L_n^\a(t) = \frac{(\a+1)_n}{n!} {}_1F_1(-n; \a+1; t) = \frac{(\a+1)_n}{n!} \sum_{k=0}^n \frac{(-n)_k}{(\a+1)_k k!} t^k ,
$$
and it satisfies the orthogonal relation 
$$
 \frac{1}{\Gamma(\a+1)} \int_0^\infty L_n^\a(t) L_m^\a(t) t^\a \e^{-t} \d t = \frac{(\a+1)_n}{n!} \delta_{m,n}.
$$
The orthogonal polynomials $\sQ_{\kb,n}$ in \eqref{eq:sQcone} on the cone are now given in terms of the 
Laguerre polynomials, which we denote by $\sL_{\kb,n}$ and they are
\begin{equation*} 
 \sL_{\kb,n} (x,t) = L_{n-m}^{2m + 2\mu + \b+d-1}(t) t^m P_{\kb}^m\left(\frac{x}{t}\right), \quad |\kb| = m, \,\, 0 \le m \le n. 
\end{equation*}
 
\subsubsection{Jacobi polynomials on the cone}
In this case, the cone is bounded, 
$$
 \VV^{d+1} = \left \{(x,t): \|x\| \le t, \,\, x \in \BB^d, \,\, 0 \le t \le 1 \right\} ,
$$
and the weight function $w(t) = t^\b (1-t)^\g$ with $\b > - d$ and $\g > -1$. Now $W_\mu$ in~\eqref{eq:Wmu-cone} becomes 
\begin{equation*}
 W_{\b,\g,\mu}(x,t) = (t^2-\|x\|^2)^{\mu-\f12} t^\b (1-t)^\g, \quad \mu > -\tfrac12, \,\, \g > -1, \,\, \b > -d , 
\end{equation*}
and its normalization constant becomes $b_{\b,\g,\mu}^J$ given by 
\begin{equation}\label{eq:bJ}
 b_{\b,\g,\mu}^J = c_{2\mu + \b+ d-1, \g} b_\mu^\BB, \quad \hbox{with} \quad 
 c_{\a,\g} = \frac{\Gamma(\a+\g+2)}{\Gamma(\a+1)\Gamma(\g+1)}.
\end{equation}

Recall that the Jacobi polynomial $P_n^{(\a,\b)}$ is given by, for $\a, \b > -1$, 
$$
 P_n^{\a,\b}(t) = \frac{(\a+1)_n}{n!} {}_2F_1\left (\begin{matrix} -n, n+\a +\b+1 \\ \a+1\end{matrix}; \frac{1-t}{2}\right) ,
$$
in terms of the hypergeometric function and it satisfies the orthogonal relation 
$$
 \frac{c_{\a,\b}}{2^{\a+\b+1}} \int_{-1}^1 P_{n}^{(\a,\b)}(t) P_m^{(\a,\b)}(t)(1-t)^\a (1+t)^\b \d t = h_n^{(\a,\b)} \delta_{n,m},
$$
where the norm square is given by 
$$
 h_n^{(\a,\b)} = \frac{(\a+1)_n(\b+1)_n (\a+\b+n+1)}{n!(\a+\b+2)_n (\a+\b+2n+1)}.
$$ 
The orthogonal polynomials $\sQ_{\kb,n}$ in \eqref{eq:sQcone} on the cone are given in terms of the Jacobi
polynomials, which we denote by $\sJ_{\kb,n}$ and they are 
\begin{equation*}
 \sJ_{\kb,n}(x,t) = P_{n-m}^{(2m + 2\mu + \b+d-1, \g)}(1-2t) t^m P_{\kb}^m\left(\frac{x}{t}\right), \quad |\kb| = m, \,\, 0 \le m \le n. 
\end{equation*} 

\section{Monomial orthogonal polynomials on the cone}
\setcounter{equation}{0}

This section aims to construct monomial orthogonal polynomials on the cone. To illustrate our construction, 
we consider the case $d =1$ in the first subsection and present our construction for $d > 1$ in the second 
subsection for the setting of $W_\mu$ with a generic weight $w$. We specialize the result
to the Laguerre and the Jacobi polynomials on the cone in the third subsection. 

\subsection{Monomial orthogonal polynomials for $d=1$} 

Our construction is based on the orthogonal polynomials $\sS_{\kb,n}$ defined in \eqref{eq:S-OPcone}. To
illustrate our approach, we consider the case $d =1$ first, for which 
$$
 W_\mu(x,t) = w(t) (t^2-x^2)^{\mu-\f12}, \quad |x| \le t, \quad \mu > -\tfrac12,
$$
and we assume the polynomial $\sS_{\kb,n}$ takes the form 
\begin{align*}
 \sS_{k,n}(x,t) := q_{n-k}^{2k+2\mu}(t) t^k C_k^{\mu}\left(\f{x}{t} \right), \quad 0 \le k \le n, 
\end{align*} 
where $q_{n-k}^{2k+2\mu}(t) = t^{n-k} + \cdots$ is a monic polynomial of degree $n-k$ and it is orthogonal
with respect to $t^{2k+2\mu}w(t)$ on $\RR$, and we retain $C_k^\mu$ instead of using the monic Gegenbauer 
polynomial for convenience. 

\subsubsection{The case of a generic weight $w$} 
Here we construct monomial orthogonal polynomials for $W_\mu$ with a generic function $w$. 
The polynomial $\sS_{0,n}(x,t) = q_n^{2\mu}(t)= t^n + \cdots$ is a monomial orthogonal polynomial of degree $n$
in the $(x,t)$ variable. For $k > 0$, $\sS_{k,n}$ is not monomial since the polynomial $t^k C_k^{\mu}\left(\f{x}{t}\right)$ 
is a homogeneous polynomials of degree $k$ in the $(x,t)$ variables. For 
$k > 0$, we use the expansion of the Gegenbauer polynomials given by \cite[(18.5.10)]{DLMF}
\begin{align*}
C_{n}^{\mu} (x) = \sum_{i=0}^{\lfloor \f{n}{2} \rfloor} b_{n,i}^\mu x^{n - 2 i} \quad \hbox{with} \quad 
 b_{n,i}^\mu = (-1)^i 2^{n - 2 i} \f{(\mu)_{n-i}}{i! (n-2i)!},
\end{align*}
which can also be reversed to give \cite[(18.18.17)]{DLMF}
\begin{align*}
 x^k = \sum_{j=0}^{\lfloor \f{k}{2} \rfloor } c_{k,j}^\mu C_{k-2j}^{\mu} (x) \quad \hbox{with} \quad 
 c_{k,j}^\mu = \frac{k!}{2^k} \f{k-2j+\mu}{(\mu)_{k-j+1} j!}. 
\end{align*}
The above formulas hold for $\mu \ne 0$ and $\mu > -\f12$. If $\mu = 0$, then $C_n^\mu$ becomes the
Chebyshev polynomial $T_n$, for which the above relations become
\begin{align*}
 x^k & = \sum_{j=0}^{\lfloor \f{k}{2} \rfloor } c_{k,j}^0 T_{k-2j} (x) \quad \hbox{with} \quad 
 c_{k,j}^0 = \frac{1}{2^k} \binom{k}{j} \begin{cases} 2 & k \ne 2j, \\
 1 & k = 2j , \end{cases} 
 \\
T_{n} (x) & = \sum_{i=0}^{\lfloor \f{n}{2} \rfloor} b_{n,i}^0 x^{n - 2 i} \quad \hbox{with} \quad 
 b_{n,i}^0 = (-1)^i 2^{n - 2 i-1} n \f{(n-i-1)!}{i! (n-2i)!}, \quad n \in \mathbb N .
\end{align*}

\begin{thm}
For $0\le k \le n$, define 
\begin{align*}
 \sV_{k,n}(x,t) = \sum_{j=0}^{\lfloor \frac{k}{2} \rfloor } c_{k,j}^\mu \sS_{k-2j,n}(x,t).
\end{align*}
Then $\sV_{k,n}(x,t) = t^{n-k}x^k + \cdots$ is a monomial polynomial of degree $n$ and 
$\{\sV_{k,n}: 0 \le k \le n\}$ is the monomial basis of $\CV_n(\VV^2, W_\mu)$. 
\end{thm}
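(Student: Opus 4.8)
The plan is to verify directly that the proposed combination $\sV_{k,n}$ is indeed of the monomial form $t^{n-k}x^k + (\text{lower degree})$ and that, being a linear combination of the orthogonal polynomials $\sS_{k-2j,n}$, it automatically lies in $\CV_n(\VV^2, W_\mu)$; the basis claim then follows by a triangularity/counting argument. First I would substitute the definition $\sS_{k-2j,n}(x,t) = q_{n-(k-2j)}^{2(k-2j)+2\mu}(t)\, t^{k-2j} C_{k-2j}^\mu(x/t)$ into the sum. Using the inversion formula $x^k = \sum_{j=0}^{\lfloor k/2\rfloor} c_{k,j}^\mu C_{k-2j}^\mu(x)$ with $x$ replaced by $x/t$ and multiplied through by $t^k$, one gets $t^k (x/t)^k = x^k = \sum_j c_{k,j}^\mu t^k C_{k-2j}^\mu(x/t)$; but in $\sS_{k-2j,n}$ the power of $t$ accompanying $C_{k-2j}^\mu(x/t)$ is only $t^{k-2j}$, not $t^k$, and there is the extra Gegenbauer-to-monic factor $q^{\cdot}_{\cdot}(t)$ of degree $n-k+2j$. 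So the leading behaviour of $\sS_{k-2j,n}$ in the relevant expansion is $t^{n-k+2j}\cdot t^{k-2j} C_{k-2j}^\mu(x/t) = t^{n-k}\cdot t^{2j}\, t^{k-2j}C_{k-2j}^\mu(x/t)$ wait — more carefully, the top-degree part of $q_{n-k+2j}^{\cdot}(t)$ is $t^{n-k+2j}$, and multiplying by $t^{k-2j}C_{k-2j}^\mu(x/t)$, whose top-degree part (in $(x,t)$, degree $k-2j$) is a homogeneous polynomial, gives something of total degree $n-2j$, not $n$. The point is that after summing against $c_{k,j}^\mu$, the degree-$n$ homogeneous parts combine to exactly $t^{n-k}x^k$, while everything of degree $<n$ is harmless since it lies in $\Pi_{n-1}^{2}$.

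The cleanest way to organize this is to separate, for each $j$, the monic polynomial $q_{n-k+2j}^{2(k-2j)+2\mu}(t)$ as its leading term $t^{n-k+2j}$ plus a remainder in $\Pi_{n-k+2j-1}$ (in $t$ alone). The leading-term contribution is $\sum_{j} c_{k,j}^\mu\, t^{n-k+2j}\, t^{k-2j} C_{k-2j}^\mu(x/t) = t^{n-k}\sum_j c_{k,j}^\mu\, t^{2j}\,t^{k-2j}C_{k-2j}^\mu(x/t)$. Hmm, that still has a spurious $t^{2j}$; let me recompute: $t^{n-k+2j}\cdot t^{k-2j} = t^{n}$, so the leading-term contribution is $\sum_j c_{k,j}^\mu\, t^{n}\, C_{k-2j}^\mu(x/t) = t^{n-k}\sum_j c_{k,j}^\mu\, t^{k} C_{k-2j}^\mu(x/t) = t^{n-k}\cdot t^k\cdot (x/t)^k = t^{n-k}x^k$, using exactly the inversion identity. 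Good — so the leading terms assemble precisely into the desired monomial. The remainder contributions: for each $j$, $(\text{a polynomial of degree} \le n-k+2j-1 \text{ in } t)\times t^{k-2j}C_{k-2j}^\mu(x/t)$, and since $t^{k-2j}C_{k-2j}^\mu(x/t)$ is a polynomial of total degree $k-2j$, the product has total degree $\le (n-k+2j-1)+(k-2j) = n-1$. Hence $\sR_{k,n} := \sV_{k,n} - t^{n-k}x^k \in \Pi_{n-1}^{2}$, which is the monomial form in \eqref{eq:Vcone-def}. I would present this computation compactly, noting that it is essentially the identity $x^k = \sum_j c_{k,j}^\mu C_{k-2j}^\mu(x)$ homogenized.

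For the orthogonality and basis statements: by Proposition \ref{prop:OPcone} (applied with $d=1$, the Gegenbauer basis $\{C_k^\mu\}$ on $[-1,1]$, and $q_{n-k}^{\a_k}$ the monic OP for $t^{\a_k}w(t)$), each $\sS_{k,n}\in\CV_n(\VV^2,W_\mu)$, so any linear combination lies in $\CV_n(\VV^2,W_\mu)$; in particular $\sV_{k,n}\in\CV_n(\VV^2,W_\mu)$, which combined with the monomial form already established shows $\sV_{k,n} = t^{n-k}x^k + \sR_{k,n}$ is the (unique) monomial orthogonal polynomial attached to the index $k$. To see that $\{\sV_{k,n}: 0\le k\le n\}$ is a basis of the $(n+1)$-dimensional space $\CV_n(\VV^2,W_\mu)$, observe that the change of basis from $\{\sS_{k,n}: 0\le k\le n\}$ (which is a basis by Proposition \ref{prop:OPcone}) to $\{\sV_{k,n}\}$ is unitriangular: $\sV_{k,n} = c_{k,0}^\mu\,\sS_{k,n} + \sum_{j\ge 1} c_{k,j}^\mu \sS_{k-2j,n}$ with $c_{k,0}^\mu = \tfrac{k!}{2^k}\cdot\tfrac{k+\mu}{(\mu)_{k+1}} \ne 0$, so the transition matrix is triangular with nonzero diagonal, hence invertible. (For $\mu = 0$ the same argument works with $c_{k,0}^0 = 2^{-k}\binom{k}{0}\cdot\{2 \text{ or } 1\}\neq 0$ and $T$ in place of $C^\mu$.)

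The main obstacle is purely bookkeeping: making sure the homogenization step is stated correctly, i.e. that in $\sS_{k-2j,n}$ the $t$-power $t^{k-2j}$ from $t^{k-2j}C_{k-2j}^\mu(x/t)$ combines with the leading power $t^{n-k+2j}$ of the monic factor to give exactly $t^n$ for every $j$, so that pulling out $t^{n-k}$ leaves $t^k C_{k-2j}^\mu(x/t)$ and the inversion identity \cite[(18.18.17)]{DLMF} applies verbatim; and keeping track that all lower-order debris sits in $\Pi_{n-1}^{2}$. There is no analytic difficulty — everything reduces to the classical Gegenbauer connection coefficients plus the triangularity observation — so I would keep the write-up to a short paragraph of the homogenization computation followed by the two-line triangularity argument.
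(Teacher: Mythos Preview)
Your proposal is correct and follows essentially the same route as the paper: split each monic $q_{n-k+2j}^{2(k-2j)+2\mu}(t)$ into its leading power $t^{n-k+2j}$ plus a lower-degree remainder, use the Gegenbauer inversion identity to collapse the leading contributions to $t^{n-k}x^k$, observe the remainders land in $\Pi_{n-1}^2$, and conclude via membership in $\CV_n$ from the $\sS_{k-2j,n}$. Your explicit unitriangularity argument for the basis claim is a mild elaboration of the paper's ``clearly independent'', but the substance is identical.
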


\begin{proof}
Since $q_{n-k+2j}^{2(k-2j) + 2\mu}(t) - t^{n-k+2j} = R_{n-k+2j-1}(t)$, which is a polynomial of degree 
$n-k+2j-1$, it follows from the definition of $c_{k,j}$ that 
\begin{align*}
 \sV_{k,n}(x,t) \, & = \sum_{j=0}^{\lfloor \frac{k}{2} \rfloor} c_{k,j}^\mu q_{n-k+2j}^{2(k-2j) + 2\mu}(t) t^{k-2j}
 C_{k-2j}^{\mu}\left(\f{x}{t} \right)\\
 & = t^n \sum_{j=0}^{\lfloor \frac{k}{2} \rfloor} c_{k,j}^\mu 
 C_{k-2j}^{\mu}\left(\f{x}{t} \right) + \sum_{j=0}^{\lfloor \frac{k}{2} \rfloor} c_{k,j}^\mu R_{n-k+2j-1}(t) 
 t^{k-2j} C_{k-2j}^{\mu}\left(\f{x}{t} \right) \\
 & = t^{n-k} x^{k} + \sum_{j=0}^{\lfloor \frac{k}{2} \rfloor} c_{k,j}^\mu R_{n-k+2j-1}(t) 
 t^{k-2j}C_{k-2j}^{\mu}\left(\f{x}{t} \right).
\end{align*}
Since the second term is evidently of degree at most $n-1$, $\sV_{k,n}$ is monomial, and the set 
$\{\sV_{k,n}: 0 \le k \le n\}$ is clearly independent. Moreover, as a linear combination of $\sS_{j,n}$, 
$\sV_{k,n}$ is an element of $\CV_n(\VV^2, W_\mu)$. This completes the proof. 
\end{proof}

\begin{prop} \label{prop:error_d=1}
For $0\leq k\leq n$, the norm of $\sV_{k,n}$ is given by 
\begin{align*}
 b_\mu \int_{\VV^2} \left | \sV_{k,n}(x,t)\right |^{2}W_{\mu }(x,t) \d x \d t 
 = \sum_{i=0}^{\lfloor \frac{k}{2}\rfloor} [c_{k,i}^\mu]^2 h_{k-2i}^\mu \left \|q_{n-k+2i}^{2(k-2i)+2\mu} \right \|^2,
\end{align*}
where $h_{n-2i}^\mu$ is given by \eqref{eq:Gegen} and it can be rewritten as 
$$
h_{k-2i}^\mu=\frac{(2\mu)_k \mu}{k!(k-2i+\mu)} \frac{(-\f{k}2)_i (\frac{-k+1}{2})_i} {(\frac{-k+1}2-\mu)_i 
(\frac{-k+2}2-\mu)_i}, 
$$
and $\|q_{n-j}^{2j+2\mu}\|$ is defined by 
$$
\left \|q_{n-j}^{2j+2\mu} \right \|^2 = b_\mu^w \int_{0}^\infty \left|q_{n-j}^{2j+2\mu}(t) \right|^2
 t^{2j+2\mu} w(t) \d t.
$$
\end{prop}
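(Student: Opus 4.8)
The plan is to read the norm off from the structure of $\sV_{k,n}$: by the theorem just proved, $\sV_{k,n}$ is a linear combination of the polynomials $\sS_{k-2i,n}$, and these building blocks are mutually orthogonal in $L^2(\VV^2,W_\mu)$. Indeed, for $d=1$ the polynomial $\sS_{j,n}(x,t)=q_{n-j}^{2j+2\mu}(t)\,t^{j}C_j^\mu(x/t)$ is exactly the polynomial $\sQ_{j,n}$ of Proposition~\ref{prop:OPcone} with $\a_j=2j+2\mu$ and the one-dimensional basis $P_j=C_j^\mu$ of $\CV_j(\BB^1,\sw_\mu)$. Hence the computation carried out in the proof of Proposition~\ref{prop:OPcone}, together with $b_\mu^\BB=c_\mu$ and the Gegenbauer orthogonality \eqref{eq:Gegen}, gives, for $0\le j,j'\le n$,
$$
 \la \sS_{j,n},\sS_{j',n}\ra_\mu
 = \left\|q_{n-j}^{2j+2\mu}\right\|^2 b_\mu^\BB\!\int_{-1}^1 C_j^\mu(y)C_{j'}^\mu(y)(1-y^2)^{\mu-\f12}\d y\;\delta_{j,j'}
 = h_j^\mu\left\|q_{n-j}^{2j+2\mu}\right\|^2\delta_{j,j'}.
$$
Alternatively one may simply unravel the inner product by separation of variables, $\int_{\VV^2}fW_\mu=\int_0^\infty\!\int_{-1}^1 f(ty,t)(1-y^2)^{\mu-\f12}\d y\;t^{2\mu}w(t)\d t$: the $y$–integration produces \eqref{eq:Gegen}, and the $t$–integration produces the weight $t^{2(k-2i)+2\mu}w(t)$, against which $q_{n-k+2i}^{2(k-2i)+2\mu}$ is orthogonal.

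With this, the first identity is immediate. Substituting $\sV_{k,n}=\sum_{i=0}^{\lfloor k/2\rfloor}c_{k,i}^\mu\sS_{k-2i,n}$ and expanding by bilinearity of $\la\cdot,\cdot\ra_\mu$, every cross term with $i\ne i'$ vanishes (since $\delta_{k-2i,\,k-2i'}=\delta_{i,i'}$), leaving
$$
 b_\mu\int_{\VV^2}\left|\sV_{k,n}(x,t)\right|^2 W_\mu(x,t)\d x\d t
 = \sum_{i=0}^{\lfloor k/2\rfloor}[c_{k,i}^\mu]^2\,h_{k-2i}^\mu\left\|q_{n-k+2i}^{2(k-2i)+2\mu}\right\|^2 ,
$$
using $n-(k-2i)=n-k+2i$. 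The case $\mu=0$ is handled verbatim with $C_j^\mu$ replaced by the Chebyshev polynomial $T_j$ and $h_j^\mu$ by $h_j^0$, or by continuity in $\mu$.

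It then remains to rewrite $h_{k-2i}^\mu=\mu(2\mu)_{k-2i}\big/\big((k-2i+\mu)(k-2i)!\big)$ in the asserted Pochhammer form (for $\mu\ne0$). The plan is to write $(2\mu)_{k-2i}=(2\mu)_k/(2\mu+k-2i)_{2i}$ and $(k-2i)!=k!/(k-2i+1)_{2i}$, which reduces everything to the ratio $(k-2i+1)_{2i}\big/(2\mu+k-2i)_{2i}$. Applying the duplication identity $(a)_{2i}=4^i(\tfrac a2)_i(\tfrac{a+1}2)_i$ to the numerator and the denominator and then the reflection $(a-i)_i=(-1)^i(1-a)_i$ to each of the four resulting Pochhammer symbols of index $i$, the powers of $4$ and of $-1$ cancel and one is left precisely with $\dfrac{(-\f k2)_i(\frac{-k+1}{2})_i}{(\frac{-k+1}{2}-\mu)_i(\frac{-k+2}{2}-\mu)_i}$, which yields the claimed expression for $h_{k-2i}^\mu$. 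The last display, defining $\|q_{n-j}^{2j+2\mu}\|$, is merely notation recorded for later use. The only delicate point is this last bookkeeping of the half–integer shifts in the Pochhammer symbols; everything else follows directly from Proposition~\ref{prop:OPcone} and \eqref{eq:Gegen}.
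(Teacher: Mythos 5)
Your proof is correct and follows essentially the same route as the paper: establish the mutual orthogonality $\la \sS_{j,n},\sS_{j',n}\ra_\mu = h_j^\mu\|q_{n-j}^{2j+2\mu}\|^2\delta_{j,j'}$ by separation of variables (i.e.\ the computation of Proposition~\ref{prop:OPcone}), expand $\sV_{k,n}$ by bilinearity so the cross terms vanish, and then rewrite $h_{k-2i}^\mu$ via the duplication and reflection identities for Pochhammer symbols, which is exactly the paper's concluding manipulation in an equivalent form.
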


\begin{proof}
For $d=1$, the polynomials $\sS_{k,n}$ are mutually orthogonal and, more precisely, 
\begin{align*}
& b_\mu \int_{\VV^2} \sS_{k,n}(x,t)\sS_{j,n}(x,t) W_\mu(x,t) \d x \d t \\
& \phantom{ola} =\, b_\mu^w \int_0^\infty q_{n-k}^{2k+2\mu}(t)q_{n-j}^{2k+2\mu}(t)w(t)t^{k+j+2\mu} \d t 
 \, c_\mu \int_{-1}^{1} C_k^\mu (x) C_j^\mu (x) (1-x^2)^{\mu -\frac{1}{2}} \d x \\
& \phantom{ola} = \, \left \| q_{n-k}^{2k+2\mu} \right \|^{2} h_k^\mu \delta_{k,j},
\end{align*}
where $h_k^\mu$ is given in \eqref{eq:Gegen}. Hence, the stated result follows from 
\begin{align*}
 b_\mu \int_{\VV^2}\left |\sV_{k,n}(x,t)\right |^{2}W_\mu(x,t)\d x \d t 
 \, & = b_\mu \int_{\VV^2}\bigg |\sum_{i=0}^{\lfloor \frac{k}{2}\rfloor }c_{k,i}^\mu S_{k-2i,n} (x,t)\bigg |^{2}W_\mu(x,t)\d x \d t \\ 
 \,& = \sum_{i=0}^{\lfloor \frac{k}{2}\rfloor}[c_{k,i}^\mu]^2 b_\mu \int_{\VV^2} \left |S_{k-2i,n}(x,t)\right |^{2}W_\mu(x,t)\d x \d t.
\end{align*}
Finally, we use $(a)_{k-2i} = {(a)_k}/{(1-a-k)_{2i}}$ and $(x)_{2j} = 2^{2j} (\frac x 2)_j (\frac{x+1}{2})_j$ to rewrite~$h_{k-2i}^\mu$ into the given form. 
\end{proof}

\subsubsection{The Laguerre polynomials on the cone}
In this case, the weight function is $w(t) = t^\b e^{-t}$ and the polynomial $q_{n-k}^{2k+2\mu}$ is given
by the monic Laguerre polynomial, denoted by $\wh L_{n-k}^\a$; more precisely, 
$$
 q_{n-k}^{2k+2\mu}(t) = \wh L_{n-k}^{2k+2\mu+\b}(t) = (-1)^{n-k} (n-k)! L_{n-k}^{2k+2\mu+\b}(t) ,
$$
so that the monomial polynomial $\sV_{k,n}^\sL$ is given by 
\begin{equation*}
 \sV_{k,n}^\sL(x,t)= \sum_{j=0}^{\lfloor \frac{k}{2} \rfloor } c_{k,j}^\mu \sS_{k-2j,n}(x,t) 
 = \sum_{j=0}^{\lfloor \frac{k}{2} \rfloor } c_{k,j}^\mu \wh L_{n-k+2j}^{2(k-2j)+2\mu+\b}(t)t^{k}C_{k}^{\mu}\left( \frac{x}{t}\right). 
\end{equation*}
It is now easy to verify that 
\begin{align*}
 \left \|q_{n-k}^{2k+2\mu} \right \|^2 \, & = \frac{1}{\Gamma(2\mu+\b+1)} 
 \int_0^\infty \left| \wh L_{n-k}^{2k+2\mu+\b}(t) \right|^2 t^{2k+2\mu+\b} \e^{-t} \d t \\
 & = (n-k)!(2\mu+\b+1)_{n+k}. 
\end{align*}
Hence, the norm of the monomial polynomial given in Proposition \ref{prop:error_d=1} can be specified. Thus, 
we obtain
\begin{align}\label{eq:Lnorm_d=2}
 b_{\b,\mu}^L & \int_{\VV^2} \left | \sV_{k,n}^\sL(x,t)\right |^{2}W_{\b,\mu }(x,t) \d x \d t \\
 & = \sum_{i=0}^{\lfloor \frac{k}{2}\rfloor} [c_{k,i}^\mu]^2 h_{k-2i}^{\mu} 
 (n-k+2i)!(2\mu+\b+1)_{n+k-2i}. \notag
\end{align}
The sum can be written as a sum of two hypergeometric functions ${}_6F_5$ evaluated at~$1$, which however 
do not have a closed-form formula. It remains to be seen if the error can be expressed by a formula similar to 
that of \eqref{eq:leastBall} on the cone. By \eqref{eq:bestapp}, the right-hand side of the identity gives the explicit
formula for the error of the least square approximation to the monomial $t^{n-|\kb|} x^\kb$ by lower degree
polynomials. Let 
$$
 E_n(f)_{\b,\mu} = \inf_{P\in \Pi_{n-1}^{d+1}} \| f - \sP \|_{L^2(\VV^{2}, W_{\b,\mu})},
$$
where the norm is defined by normalized weight function. Using \eqref{eq:Lnorm_d=2}, we can compute this
quantity when $f$ is a monomial. Below we list the first few for small $k$:
\begin{align*}
 \left [ E_n(t^n)_{\b,\mu} \right]^2 \,& = n!(\b+2\mu+1)_n; \\
 \left [ E_n(t^{n-1}x)_{\b,\mu} \right]^2 \,& = \frac{(n-1)!(\b+2\mu+1)_{n+1}}{2(\mu+1)}; \\
 \left [ E_n(t^{n-2}x^2)_{\b,\mu} \right]^2 \,& = \frac{n!(\b+2\mu+1)_n}{4(\mu+1)^2}
 + \frac{(2\mu+1)(n-2)!(\b+2\mu+1)_{n+2}}{4(\mu+1)^2(\mu+2)}; \\
 \left [ E_n(t^{n-3}x^3)_{\b,\mu} \right]^2 \,& = \frac{9(n-1)!(\b+2\mu+1)_{n+1}}{8(\mu+1)(\mu+2)^2}+
 \frac{3(2\mu+1)(n-3)!(\b+2\mu+1)_{n+3}}{8(\mu+1)(\mu+2)^2(\mu+3)}.
\end{align*}

It should be noted that, for $d =1$, a change of variables $(t,x) \mapsto (u,v)$ given by 
\begin{equation}\label{eq:map_to_R2}
 u = \frac{t+x}{2} \quad \hbox{and}\quad v = \frac{t-x}{2}
\end{equation}
maps $\VV^2$ to $\RR_+^2$ and the weight function $t^\b e^{-t}$ to 
$$
 W_\b^L = (u+v)^\b \e^{-u-v}, \qquad (u,v) \in \RR_+^2.
$$ 
In particular, when $\b = 0$, the orthogonal polynomials become the product of classical Laguerre polynomials. 
For $\b > 0$, orthogonal polynomials with respect to $W_\b^L$ are studied in \cite{IZ}. We notice, however, that
the monomial polynomial $\sV_{k,n}$ is no longer monomial under the mapping since 
$$
 \sV_{k,n} (x,t) = \sV_{k,n}(u-v,u+v) = (u+v)^{n-k} (u-v)^k + \cdots, 
$$
so that our result remains new even when $\b = 0$. 

\subsubsection{The Jacobi polynomials on the cone}
Here the weight function is $w(t)=t^\b(1-t)^\g$ and the polynomial $q_{n-k}^{2k+2\mu }$ is 
given in terms of the monic Jacobi polynomial, denoted by $\widehat{P}_{n-k}^{(\a,\b)}$; more
precisely, 
\begin{align*}
q_{n-k}^{2k+2\mu }(t) & = \f{1}{(-2)^{n-k}} \widehat{P}_{n-k}^{(2k+2\mu +\b,\g)}(1-2t)
 \\
 & = 
\frac{(-1)^{n-k}(n-k)!}{(n+k+2\mu +\b+\g+1) _{n-k}}
P_{n-k}^{(2k+2\mu+\b,\g)}(1-2t),
\end{align*}
so that the monomial polynomial ${\mathsf{V}}_{k,n}^{\mathsf{J}}$ is given by 
\begin{align*}
\sV_{k,n}^\sJ (x,t) & =\sum_{j=0}^{\lfloor \frac{k}{2}\rfloor}c_{k,j}^{\mu} \sS_{k-2j,n}(x,t) \\
 & = \sum_{j=0}^{\lfloor \frac{k}{2} \rfloor} \f{c_{k,j}^\mu}{(-2)^{n-k+2j}}
 \widehat{P}_{n-k+2j}^{(2(k-2j)+2\mu +\b,\g)}(1-2t)t^{k}C_{k}^{\mu }\left( \frac{x}{t}\right).
\end{align*}
In this case, it is easy to verify that 
\begin{align*}
\left \| q_{n-k}^{2k+2\mu }\right \|^{2}\,& =
\f{c_{2\mu+\b,\g}}{(-2)^{2n-2k}} \int_{0}^{1} \left | \wh P_{n-k}^{(2k +2\mu +\b ,\g)}(1-2t) \right |^2
 t^{2k+2\mu +\b} (1-t)^\g \d t \\
& =\frac{(n-k)! (2\mu +\beta +1)_{n+k}(\g+1)_{n-k}}{(2\mu +\b+\g+2)_{2n} (n+k+2\mu +\b +\g+1)_{n-k}}.
\end{align*}
Hence, the norm of the monomial polynomial is given by the sum 
\begin{align} \label{eq:normJ}
& b_{\beta,\gamma,\mu}^{J} \int_{\VV^2} \left | \sV_{k,n}^\sJ (x,t)\right |^2 W_{\b,\g,\mu}(x,t) \d x \d t \\
 &\quad = \sum_{i=0}^{\lfloor \frac{k}{2}\rfloor }[c_{k,i}^\mu]^2 h_{k-2i}^\mu 
 \frac{(n-k+2i)! (2\mu + \b +1)_{n+k-2i}(\g +1)_{n-k+2i}}{(2\mu + \b + \g +2)_{2n}
 (n+k-2i+2\mu + \b + \g +1) _{n-k+2i}}. \notag
\end{align}
The sum can be written as a sum of three hypergeometric functions ${}_8F_7$ evaluated at~$1$ but no
closed-form formula. It remains to be seen if it can be expressed by a formula similar to that of \eqref{eq:leastBall}. 
Let 
$$
 E_n(f)_{\b,\g,\mu} = \inf_{P\in \Pi_{n-1}^{d+1}} \| f - \sP \|_{L^2(\VV^{2}, W_{\b,\g,\mu})},
$$
where the norm is defined by normalized weight function. We can use \eqref{eq:normJ} to compute this quantity
for $f$ is a monomial. Below are the first few cases for $f(x,y) = t^{n-|\kb|}x^\kb$, in which $\a = \b+\g + 2\mu+1$,
\begin{align*}
 \left [ E_n(t^n)_{\b,\g,\mu} \right]^2 \,& = \frac{n!(\g+1)_n (\a)_n (\b+2\mu+1)_n}{n!(\a)_{2n}(\a+1)_{2n}}; \\
 \left [ E_n(t^{n-1}x)_{\b,\g,\mu} \right]^2 \,& = \frac{(n-1)!(\g+1)_{n-1}(\a+1)_n(\b+2\mu+1)_{n+1}}
 {2(\mu+1) (\a+1)_{2n} (\a+1)_{2n-1}}; \\
 \left [ E_n(t^{n-2}x^2)_{\b,\g,\mu} \right]^2 \,& = \frac{n!(\g+1)_n(\a)_n (\a+1)_n(\b+2\mu+1)_n}{4(\mu+1)^2
 (\a)_{2n} (\a+1)_{2n}} \\
 &   + \frac{(2\mu+1)(n-2)!(\g+1)_{n-2}(\a+1)_n (\a+2)_n (\b+2\mu+1)_{n+2}}
     {4(\mu+1)^2(\mu+2)(\a+1)_{2n} (\a+2)_{2n-2}}.
\end{align*}
In particular, $E_n(t^n)_{\b,\g,\mu}$ is the $L^2$ norm of 
$\|\wh P_n^{(2\mu+\b,\g)}\|_{L^2([-1,1],w_{2\mu+\b,\g})}$, as can be easily verified. 

We note that, for $d =1$, the change of variables $(t,x) \mapsto (u,v)$ in \eqref{eq:map_to_R2} sends 
$$
 (x,t) \in \VV^2 \mapsto (u,v) \in \TT^2 =\{(u,v): u\ge 0, v\ge 0, u+v \le 1\},
$$
where $\TT^2$ is the standard triangle; and sends the weight function $t^\b (1-t)^\g$ to 
$$
 W_\b^J = (u+v)^\b (1-u-v)^\g, \qquad (u,v) \in \TT^2.
$$ 
In particular, when $\b = 0$, the orthogonal polynomials on the cone become a special case of the classical
Jacobi polynomials on the triangle. For $\b \ne -1$, the orthogonal polynomials are not the classical Jacobi 
polynomials but are special cases studied in~\cite{OTV}, which has also been extended to simplexes in \cite{AAG}. 
Nevertheless, as in the Laguerre case, the monomial polynomial $\sV_{k,n}$ is no longer monomial under
the mapping, and our result remains new even when $\b = 0$. 

\subsection{Monomial orthogonal polynomials for $d>1$}
Following the idea for $d =1$, we now construct monomial orthogonal polynomials on the cone for $d > 1$. 
Again we deal with the weight function $W_\mu$ with a generic weight function $w$ first. 

\subsubsection{The case of a generic weight function $w$} We consider $W_\mu$ defined in \eqref{eq:Wmu-cone}
and use $\sS_{\kb,n}$ defined in \eqref{eq:S-OPcone} to construct monomial orthogonal polynomial $\sV_{\kb,n}$ 
that satisfies \eqref{eq:Vcone-def}. 

\begin{thm}
Let $d \ge 2$ and let $c_{\kb,\jb}^\mu$ be the coefficients defined in \eqref{eq:monic-ckj}. For $\kb \in \NN_0^d$
and $|\kb|\le n$, define 
\begin{align} \label{eq:sVk-def}
 \sV_{\kb,n} (x,t) = \sum_{2 \jb \leq \kb} c_{\kb,\jb}^\mu \sS_{\kb-2 \jb,n}(x,t).
\end{align}
Then $\sV_{\kb,n}(x,t) = t^{n-|\kb|} x^{\kb}+ \cdots$ is a monomial polynomial of degree $n$ and the
set $\{\sV_{\kb,n}: 0 \le |\kb| \le n, \, \kb \in \NN_0^d\}$ is the monomial basis of $\CV_n(\VV^{d+1}, W_\mu)$. 
Furthermore, $\sV_{\kb,n}$ satisfies
\begin{align} \label{eq:sVk-def2}
 \sV_{\kb,n} (x,t) = \sum_{2 \ib \leq \kb} \, x^{\kb- 2\ib} \sum_{m=0}^{|\ib|} 
 q_{n-|\kb|+2m}^{2(|\kb|-2 m)+ 2\mu}(t) t^{2|\ib|-2m} B_{\ib,m}, 
\end{align} 
where $B_{\ib, m}$ depends only on $|\ib|$ and is defined by, with $\l = \mu+ \f{d-1}{2}$,
\begin{align}\label{eq:Bim}
B_{\ib,m} = \frac{(-1)^m (\frac{-\kb}{2})_{\ib}(\frac{-\kb+\one}{2})_{\ib}}{(-|\kb| - \lambda+1)_{|\ib|+ m}\ib!} \binom{|\ib|}{m} 
 \left(2 (-|\kb| -\lambda+1)_{m} - (-|\kb| - \lambda)_{m} \right). 
\end{align}
\end{thm}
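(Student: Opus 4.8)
The plan is to establish the three assertions in turn, leaning on the identities \eqref{eq:V-monic}, \eqref{eq:monic-V} and \eqref{eq:bc=delta} recorded for the ball polynomials $V_\kb$, together with Proposition~\ref{prop:OPcone}; the argument runs parallel to the $d=1$ case, with the univariate inversion $x^k=\sum_j c_{k,j}^\mu C_{k-2j}^\mu(x)$ replaced by its $d$-variable analogue \eqref{eq:monic-V}. Since $\{V_\jb:|\jb|=m\}$ is a basis of $\CV_m(\BB^d,\sw_\mu)$, Proposition~\ref{prop:OPcone} gives $\sS_{\jb,n}\in\CV_n(\VV^{d+1},W_\mu)$ for all $|\jb|\le n$, so the linear combination \eqref{eq:sVk-def} lies in $\CV_n(\VV^{d+1},W_\mu)$. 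To see that $\sV_{\kb,n}$ has the form \eqref{eq:Vcone-def}, I would write $q_{n-|\kb|+2|\jb|}^{\a_{|\kb|-2|\jb|}}(t)=t^{\,n-|\kb|+2|\jb|}+r_\jb(t)$ with $\deg r_\jb\le n-|\kb|+2|\jb|-1$, and use that $t^{\,|\kb|-2|\jb|}V_{\kb-2\jb}(x/t)$ is a homogeneous polynomial of degree $|\kb|-2|\jb|$ in $(x,t)$ — immediate from \eqref{eq:V-monic}, which exhibits $V_\lb$ as a sum of monomials $y^{\lb-2\mb}$. Hence $\sS_{\kb-2\jb,n}=t^{n}V_{\kb-2\jb}(x/t)+(\text{degree}\le n-1)$, so its degree-$n$ homogeneous part is $t^{n}V_{\kb-2\jb}(x/t)$, and summing against $c_{\kb,\jb}^\mu$ the degree-$n$ part of $\sV_{\kb,n}$ becomes $t^{n}\sum_{2\jb\le\kb}c_{\kb,\jb}^\mu V_{\kb-2\jb}(x/t)=t^{n}(x/t)^\kb=t^{n-|\kb|}x^\kb$ by \eqref{eq:monic-V}. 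Distinct $\kb$ give distinct leading monomials, so $\{\sV_{\kb,n}:0\le|\kb|\le n\}$ is linearly independent; since its cardinality is $\sum_{m=0}^n\binom{m+d-1}{m}=\binom{n+d}{n}=\dim\CV_n(\VV^{d+1},W_\mu)$ it is a basis, and because $\CV_n(\VV^{d+1},W_\mu)\cap\Pi_{n-1}^{d+1}=\{0\}$ the monomial orthogonal polynomial with leading term $t^{n-|\kb|}x^\kb$ is unique, so this is \emph{the} monomial basis.

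For the monomial expansion \eqref{eq:sVk-def2}, I would substitute \eqref{eq:V-monic} into \eqref{eq:sVk-def}. Using $t^{\,|\kb|-2|\jb|}(x/t)^{(\kb-2\jb)-2\mb}=t^{2|\mb|}x^{\kb-2\jb-2\mb}$ gives
\[
 \sV_{\kb,n}(x,t)=\sum_{2\jb\le\kb}c_{\kb,\jb}^\mu\,q_{n-|\kb|+2|\jb|}^{\a_{|\kb|-2|\jb|}}(t)\sum_{2\mb\le\kb-2\jb}b_{\kb-2\jb,\mb}^\mu\,t^{2|\mb|}x^{\kb-2\jb-2\mb}.
\]
Setting $\ib=\jb+\mb$ and interchanging the summations rewrites the right side as $\sum_{2\ib\le\kb}x^{\kb-2\ib}\sum_{\jb\le\ib}c_{\kb,\jb}^\mu b_{\kb-2\jb,\ib-\jb}^\mu\,q_{n-|\kb|+2|\jb|}^{\a_{|\kb|-2|\jb|}}(t)\,t^{2|\ib|-2|\jb|}$; since the $q$-factor and the power of $t$ depend on $\jb$ only through $|\jb|$, grouping the inner sum by $m=|\jb|$ reduces the whole assertion to the coefficient identity
\[
 B_{\ib,m}=\sum_{\substack{\jb\le\ib\\ |\jb|=m}}c_{\kb,\jb}^\mu\,b_{\kb-2\jb,\ib-\jb}^\mu .
\]

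Finally I would prove this identity by a direct Pochhammer computation using \eqref{eq:monic-ckj} and the definition of $b_{\kb,\jb}^\mu$ in \eqref{eq:V-monic}. The numerator factors combine via $(\tfrac{-\kb}{2})_\jb(\tfrac{-\kb}{2}+\jb)_{\ib-\jb}=(\tfrac{-\kb}{2})_\ib$ and the same for $(\tfrac{-\kb+\one}{2})_\ib$; the denominators combine via $(-|\kb|-\lambda+1)_{2m}\,(-|\kb|+2m-\lambda+1)_{|\ib|-m}=(-|\kb|-\lambda+1)_{|\ib|+m}$, using $|\kb-2\jb|=|\kb|-2m$; and the extra factor $2(-|\kb|-\lambda+1)_{|\jb|}-(-|\kb|-\lambda)_{|\jb|}$ in \eqref{eq:monic-ckj} depends on $\jb$ only through $m$. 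What then remains in the $\jb$-sum is $1/(\jb!\,(\ib-\jb)!)$, and $\sum_{|\jb|=m,\,\jb\le\ib}\binom{\ib}{\jb}=\binom{|\ib|}{m}$ (a multivariate Vandermonde) produces exactly \eqref{eq:Bim}. The only real difficulty is organizational — keeping the Pochhammer bookkeeping straight, especially tracking the shift of the base parameter in $b_{\kb-2\jb,\ib-\jb}^\mu$ from $-|\kb|-\lambda+1$ to $-|\kb|+2m-\lambda+1$; once that shift is handled the remaining manipulations are routine. As a consistency check, summing $B_{\ib,m}$ over $0\le m\le|\ib|$ recovers $\sum_{\jb\le\ib}c_{\kb,\jb}^\mu b_{\kb-2\jb,\ib-\jb}^\mu=\delta_{\ib,0}$, the ``$CB$'' form of \eqref{eq:bc=delta}, which agrees with the leading term $t^{n-|\kb|}x^\kb$ found above.
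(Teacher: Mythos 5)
Your proposal is correct and follows essentially the same route as the paper: the leading term is extracted by applying the inversion \eqref{eq:monic-V} to the degree-$n$ homogeneous parts $t^{n}V_{\kb-2\jb}(x/t)$ of the $\sS_{\kb-2\jb,n}$, the expansion \eqref{eq:sVk-def2} is obtained by substituting \eqref{eq:V-monic} and reindexing with $\ib=\jb+\mb$, and $B_{\ib,m}$ is evaluated by the same Pochhammer recombination (handling the base shift via $(a)_{2m}(a+2m)_{|\ib|-m}=(a)_{|\ib|+m}$) followed by the multivariate Vandermonde sum $\sum_{|\jb|=m,\ \jb\le\ib}\binom{\ib}{\jb}=\binom{|\ib|}{m}$. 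The additional details you supply on linear independence and the dimension count only make explicit what the paper leaves implicit.
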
 

\begin{proof}
Using that $q_{n-m}^{\a_m}(t) - t^{n-m} = R_{n-m-1}(t)$ is a polynomial of degree $n-m-1$, 
$\a_m = 2m + 2\mu$, we obtain, by \eqref{eq:monic-V},
\begin{align*}
 \sV_{\kb,n}(x,t) \, & = \sum_{2 \jb \leq \kb} c_{\kb,\jb}^\mu q_{n-|\kb|+2|\jb|}^{2(|\kb|-2|\jb|)+ 2\mu} (t) t^{|\kb|-2|\jb|}
 V_{\kb-2\jb}\left(\f{x}{t} \right)\\
 & = t^n\sum_{2 \jb \leq \kb} c_{\kb,\jb}^\mu V_{\kb-2\jb}\left(\f{x}{t} \right)
 + \sum_{2 \jb \leq \kb} c_{\kb,\jb}^\mu R_{n-|\kb|+2|\jb|-1}(t) 
 t^{|\kb|-2|\jb|} V_{\kb-2\jb}\left(\f{x}{t} \right) \\
 & = t^{n-|\kb|} x^{\kb} + \sum_{2 \jb \leq \kb} c_{\kb,\jb}^\mu R_{n-|\kb|+2|\jb|-1}(t) 
 t^{|\kb|-2|\jb|} V_{\kb-2\jb}\left(\f{x}{t} \right),
\end{align*}
which shows that $\sV_{\kb,n}$ is a monomial polynomial of degree $n$ and it is an element of 
$\CV_n(\VV^{d+1}, W_{\mu})$ as a linear combination of $\sS_{\kb,n}$. As a consequence, 
$\{V_{\kb,n}: 0 \le |\kb| \le n\}$ is the monomial basis for the space of orthogonal polynomials. 

We now derive the more explicit formula for this polynomial. Using the monomial expansion of $V_\kb$ in 
\eqref{eq:V-monic}, we obtain
\begin{align*}
 \sV_{\kb,n}(x,t) \, & = \sum_{2 \jb \leq \kb} c_{\kb,\jb}^\mu q_{n-|\kb|+2|\jb|}^{2(|\kb|-2|\jb|)+ 2\mu} (t)
 t^{|\kb|-2|\jb|} \sum_{2 \ib \leq \kb-2 \jb} b_{\kb-2\jb,\ib}^\mu \, \left( \frac{x}{t}\right)^{\kb- 2\jb - 2 \ib} \\
 & = \sum_{2 \jb \leq \kb} c_{\kb,\jb}^\mu q_{n-|\kb|+2|\jb|}^{2(|\kb|-2|\jb|)+ 2\mu} (t)
 \sum_{2 \ib \leq \kb} t^{2|\ib|-2|\jb|} b_{\kb-2\jb,\ib-\jb}^\mu \, x^{\kb- 2\ib} \\
 & = \sum_{2 \ib \leq \kb} \, x^{\kb- 2\ib} \sum_{\jb \leq \ib} q_{n-|\kb|+2|\jb|}^{2(|\kb|-2|\jb|)+ 2\mu} (t)
 t^{2(|\ib|-|\jb|)} c_{\kb,\jb}^\mu b_{\kb-2\jb,\ib-\jb}^\mu \\
 & = \sum_{2 \ib \leq \kb} \, x^{\kb- 2\ib} \sum_{m=0}^{|\ib|} 
 q_{n-|\kb|+2|\jb|}^{2(|\kb|-2|\jb|)+ 2\mu} (t) t^{2|\ib|-2m} 
 \sum_{|\jb|=m} c_{\kb,\jb}^\mu b_{\kb-2\jb,\ib-\jb}^\mu.
\end{align*}
We now evaluate the last sum in the statement, which is the $B_{\ib,m}$. First, we rewrite $b_{\kb-2\jb,\ib-\jb}^\mu$. 
By \eqref{eq:V-monic} and using 
$$
 (a+m)_{n-m} = \frac{(a)_n}{(a)_m} \quad \hbox{and} \quad (a+ 2m)_{n-m} = \frac{(a)_{n+m}}{(a)_{2m}},
$$
as well as 
$$
 (-|\kb-2 \jb| - \lambda+1)_{|\ib-\jb|} = (-|\kb|+2 |\jb| - \lambda+1)_{|\ib|-|\jb|} 
 = \frac{(-|\kb|- \lambda+1)_{|\ib|+|\jb}}{(-|\kb - \lambda+1)_{2|\jb|}},
$$
we obtain
\begin{align*}
 b_{\kb-2\jb, \ib-\jb}^\mu \, & = \frac{(\frac{-\kb+2\jb)}{2})_{\ib -\jb} 
 (\frac{-\kb+2\jb+1 )}{2})_{\ib -\jb}}{(-|\kb-2 \jb| - \lambda+1)_{|\ib-\jb|} (\ib-\jb)!} \\
 & = \frac{(\frac{-\kb}{2})_{\ib}}{(\frac{-\kb}{2})_{\jb}} \frac{(\frac{-\kb +1}{2})_{\ib}}{(\frac{-\kb+1}{2})_{\jb}} 
 \frac{(-|\kb| - \lambda+1)_{2|\jb|}}{(-|\kb| - \lambda+1)_{|\ib|+|\jb|}(\ib-\jb)!}.
\end{align*}
Hence, using \eqref{eq:monic-V}, it follows that 
\begin{align*}
B_{\ib,m} & = \sum_{|\jb|=m} c_{\kb,\jb}^\mu b_{\kb-2 \jb, \ib-\jb}^\mu \,
 \\
 & = \left(2 (-|\kb| -\lambda+1)_{m} - (-|\kb| - \lambda)_{m} \right)
\frac{(-1)^m (\frac{-\kb}{2})_{\ib}(\frac{-\kb+\one}{2})_{\ib}}{(-|\kb| - \lambda+1)_{|\ib|+ m}} 
 \sum_{|\jb|=m} \frac{1}{(\ib-\jb)! \jb!}.
\end{align*}
By the binomial formula of $d$ variables, the last sum is equal to $\frac{1}{\ib!} \binom{|\ib|}{m}$. 
Putting these together proves the theorem. 
\end{proof}

The expression of $\sV_{\kb,n}$ in \eqref{eq:sVk-def2} is closest we get for the monomial expansion of
$\sV_{\kb,n}$. Using the explicit formula, we can compute the norm of the polynomial $\sV_{\kb,n}$.

\begin{prop} \label{prop:error_d>1}
For $\kb \in \NN_0^d$, $k = |\kb|$, and $0\leq k\leq n$, the norm of $\sV_{\kb,n}$ is given by 
\begin{align*}
 b_\mu \int_{\VV^{d+1}} & \left | \sV_{\kb,n}(x,t)\right |^{2}W_{\mu}(x,t) \d x \d t \\
 &
 = \frac{(\frac{\one}2)_{\kb}}{(\mu+\frac{d+1}2)_k} 
 \sum_{2 \ib \leq \kb} \sum_{m=0}^{|\ib|} B_{\ib,m} \|q_{n-k+2m}^{2(k-2 m)+ 2\mu}\|^2
 \frac{(-k-\mu-\frac{d-1}2)_{|\ib|}}{(-\kb+\f12)_\ib},
\end{align*}
where $\|q_{n-j}^{2j+2\mu}\|$ is defined by 
$$
\left \|q_{n-j}^{2j+2\mu} \right \|^2 = b_\mu^w \int_{0}^\infty \left|q_{n-j}^{2j+2\mu}(t) \right|^2
 t^{2j+2\mu} w(t) \d t.
$$
\end{prop}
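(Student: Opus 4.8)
The plan is to exploit that, by the previous theorem, $\sV_{\kb,n}\in\CV_n(\VV^{d+1},W_\mu)$ and $\sV_{\kb,n}(x,t)=t^{n-k}x^\kb+\sR_\kb(x,t)$ with $\sR_\kb\in\Pi_{n-1}^{d+1}$, where $k=|\kb|$. Since $\sV_{\kb,n}$ is orthogonal to every polynomial of degree at most $n-1$, the tail $\sR_\kb$ contributes nothing to the square norm, so that
\begin{align*}
 b_\mu\int_{\VV^{d+1}}\left|\sV_{\kb,n}(x,t)\right|^{2}W_\mu(x,t)\,\d x\,\d t
 =\la\sV_{\kb,n},\sV_{\kb,n}\ra_\mu=\la\sV_{\kb,n},t^{n-k}x^\kb\ra_\mu .
\end{align*}
The whole computation thus reduces to pairing $\sV_{\kb,n}$ with its single leading monomial $t^{n-k}x^\kb$.

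For that pairing I would substitute the explicit monomial-in-$x$ expansion \eqref{eq:sVk-def2}. A generic summand is $x^{\kb-2\ib}\,q_{n-k+2m}^{2(k-2m)+2\mu}(t)\,t^{2|\ib|-2m}B_{\ib,m}$, and pairing it with $t^{n-k}x^\kb$ produces $x^{2\kb-2\ib}$ in the $x$ variables. The separation-of-variables (slicing) formula for $W_\mu$ then splits the integral over $\VV^{d+1}$ into the product of a one-dimensional integral against $w$ and a moment over $\BB^d$ against $\sw_\mu$. The essential bookkeeping is the power of $t$: after $x=ty$ the factor $x^{2\kb-2\ib}$ contributes $t^{2k-2|\ib|}$, which combines with the explicit $t^{2|\ib|-2m}$, with $t^{n-k}$, and with the Jacobian $t^{d+2\mu-1}$ into a power of $t$ that is independent of $\ib$ and equals exactly $\deg q_{n-k+2m}^{2(k-2m)+2\mu}$ plus the exponent of the one-dimensional weight against which $q_{n-k+2m}^{2(k-2m)+2\mu}$ is orthogonal. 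Hence that monic polynomial is paired with the monomial of its own degree against its own weight, and, after writing $b_\mu=b_\mu^\BB b_\mu^w$ and absorbing $b_\mu^w$, orthogonality collapses each one-dimensional integral to $\left\|q_{n-k+2m}^{2(k-2m)+2\mu}\right\|^{2}$, leaving the moment over $\BB^d$ weighted by $b_\mu^\BB$. This gives
\begin{align*}
 \la\sV_{\kb,n},t^{n-k}x^\kb\ra_\mu
 =\sum_{2\ib\le\kb}\sum_{m=0}^{|\ib|}B_{\ib,m}\,\left\|q_{n-k+2m}^{2(k-2m)+2\mu}\right\|^{2}
 b_\mu^\BB\int_{\BB^d}y^{2\kb-2\ib}\sw_\mu(y)\,\d y .
\end{align*}

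It remains to evaluate the ball moment and recognize the asserted scalar. By the classical moment formula for $\sw_\mu$ on $\BB^d$ (a Beta-integral computation), $b_\mu^\BB\int_{\BB^d}y^{2\ab}\sw_\mu(y)\,\d y=(\tfrac{\one}{2})_\ab / (\mu+\tfrac{d+1}{2})_{|\ab|}$; taking $\ab=\kb-\ib$ and rewriting the Pochhammer symbols through $(a)_{n-j}=(-1)^{j}(a)_n/(1-a-n)_j$, applied coordinatewise with $a=\tfrac12$ in the numerator and with $a=\mu+\tfrac{d+1}{2}$ in the denominator, one obtains
\begin{align*}
 b_\mu^\BB\int_{\BB^d}y^{2(\kb-\ib)}\sw_\mu(y)\,\d y
 =\frac{(\frac{\one}{2})_\kb}{(\mu+\frac{d+1}{2})_k}\,\frac{(-k-\mu-\frac{d-1}{2})_{|\ib|}}{(-\kb+\frac12)_\ib},
\end{align*}
which is exactly the prefactor in the statement. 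Substituting this into the double sum yields the claimed identity. I expect the only genuinely delicate point to be this last Pochhammer rearrangement together with checking that the power-count in $t$ is such that every one-dimensional integral collapses to a single norm; the rest is the routine use of the slicing formula and of the two standard integral evaluations.
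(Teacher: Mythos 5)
Your proposal is correct and follows essentially the same route as the paper's proof: reduce the square norm to the pairing $\la\sV_{\kb,n},t^{n-k}x^\kb\ra_\mu$ by orthogonality, insert the expansion \eqref{eq:sVk-def2}, separate variables via $x=ty$ so that each one-dimensional integral collapses to $\|q_{n-k+2m}^{2(k-2m)+2\mu}\|^2$ by monicity, and evaluate the ball moment $b_\mu^\BB\int_{\BB^d}y^{2(\kb-\ib)}\sw_\mu(y)\,\d y$ (the paper cites Lemma 4.4.1 of \cite{DX}) followed by the same Pochhammer rearrangement. The power count in $t$ and the identity $(a)_{n-j}=(-1)^j(a)_n/(1-a-n)_j$ both check out, so no gaps.
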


\begin{proof}
Since $\sV_{\kb,n}$ is monomial orthogonal polynomial, we obtain 
\begin{align*} 
 \mathrm{LH}: = b_\mu \int_{\VV^{d+1}} & \left[\sV_{\kb,n}(x,t)\right]^2 W_\mu(x,t) \d x \d t \\
 & = b_\mu \int_{\VV^{d+1}} \sV_{\kb,n}(x,t) t^{n-k} x^\kb W_\mu(x,t) \d x \d t. 
\end{align*}
By \eqref{eq:sVk-def2} and setting $x = t y$, it follows that 
\begin{align*}
 \mathrm{LH}:= & \sum_{2 \ib \leq \kb} b_\mu \int_{\VV^{d+1}} x^{\kb- 2\ib} \sum_{m=0}^{|\ib|} 
 q_{n-k+2m}^{2(k-2 m)+ 2\mu}(t) t^{2|\ib|-2m} B_{\ib,m} t^{n-k}x^\kb W_\mu(x,t) \d x \d t \\
 = & \sum_{2 \ib \leq \kb} \sum_{m=0}^{|\ib|} B_{\ib,m} b_\mu^w \int_0^\infty 
 q_{n-k+2m}^{2(k-2 m)+ 2\mu}(t) t^{n+k-2m+2\mu+d-1} w(t) \d t \\
 &\qquad \qquad\quad \times b_\mu^\BB \int_{\BB^d} y^{2\kb - 2\ib} (1-\|y\|^2)^{\mu-\f12} \d y \\
 = & \sum_{2 \ib \leq \kb} \sum_{m=0}^{|\ib|} B_{\ib,m} \|q_{n-k+2m}^{2(k-2 m)+ 2\mu}\|^2
 b_\mu^\BB \int_{\BB^d} y^{2\kb - 2\ib} (1-\|y\|^2)^{\mu-\f12} \d y. 
\end{align*}
For the last integral, let $\TT^d = \{y \in \RR^d: y_i \ge 0, 1\le i \le d, \, 1- |y| \ge 0\}$,
where $|y| = y_1+\ldots + y_d$, be the simplex in $\RR^d$. Using Lemma 4.4.1 in \cite{DX}, we obtain 
that 
\begin{align*}
 b_\mu^\BB \int_{\BB^d} y^{2\kb - 2\ib} (1-\|y\|^2)^{\mu-\f12} \d y 
 \, & = b_\mu^\BB \int_{\TT^d} y^{\kb - \ib -\f12} (1-|y|)^{\mu-\f12} \d y \\
 \,& = \frac{(\frac{\one}2)_{\kb-\ib}}{(\mu+\frac{d+1}2)_{k-|\ib|}} 
 = \frac{(\frac{\one}2)_{\kb}(-\mu-\frac{d-1}2 -k)_{|\ib|}}{(-\kb+\f12)_\ib (\mu+\frac{d+1}2)_{k}}.
\end{align*}
Putting together, this completes the proof.
\end{proof}
Our next proposition shows that \eqref{eq:sVk-def} can also be reversed. 

\begin{prop} \label{prop:sS-sV}
Let $b_{\kb,\jb}$ be the coefficients in \eqref{eq:V-monic}. Then, for $\kb \in \NN_0^d$ and $|\kb| \le n$, 
$$
 \sS_{\kb,n}(x,t) = \sum_{2 \jb \leq \kb} b_{\kb, \jb}^\mu \sV_{\kb - 2 \jb,n} (x,t). 
$$
\end{prop}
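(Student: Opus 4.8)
The plan is to invert the relation \eqref{eq:sVk-def} by the same bookkeeping that produced the scalar identity \eqref{eq:bc=delta} on the ball, now carried out one degree at a time in the $t$-variable. Recall that by definition $\sV_{\kb,n}(x,t) = \sum_{2\ib\le\kb} c_{\kb,\ib}^\mu \sS_{\kb-2\ib,n}(x,t)$, where $\sS_{\lb,n}(x,t) = q_{n-|\lb|}^{\a_{|\lb|}}(t)\, t^{|\lb|} V_{\lb}(x/t)$ and the coefficients $c_{\kb,\ib}^\mu$, $b_{\kb,\ib}^\mu$ are the ball coefficients from \eqref{eq:monic-ckj} and \eqref{eq:V-monic}.

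First I would substitute the proposed right-hand side and expand:
\begin{align*}
 \sum_{2\jb\le\kb} b_{\kb,\jb}^\mu \sV_{\kb-2\jb,n}(x,t)
 &= \sum_{2\jb\le\kb} b_{\kb,\jb}^\mu \sum_{2\ib\le\kb-2\jb} c_{\kb-2\jb,\ib}^\mu \sS_{\kb-2\jb-2\ib,n}(x,t).
\end{align*}
Then I would reindex the inner sum by setting $\lb=\ib+\jb$, so that $\kb-2\jb-2\ib = \kb-2\lb$ and the double sum becomes $\sum_{2\lb\le\kb}\sS_{\kb-2\lb,n}(x,t)\sum_{\jb\le\lb} b_{\kb,\jb}^\mu c_{\kb-2\jb,\lb-\jb}^\mu$. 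Here the key point is that the shift $\kb\mapsto\kb-2\jb$ changes the first index of the ball coefficient $c$, and the identity \eqref{eq:bc=delta} is stated precisely with this shifted first index; it asserts $\sum_{\ib\le\jb} b_{\kb,\ib}^\mu c_{\kb-2\ib,\jb-\ib}^\mu = \delta_{\jb,0}$. Applying this with the roles renamed ($\jb\mapsto\lb$, $\ib\mapsto\jb$) collapses the inner sum to $\delta_{\lb,0}$, leaving only the $\lb=0$ term $\sS_{\kb,n}(x,t)$, as claimed.

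The one thing that needs a moment's care — and I expect this to be the only real obstacle — is verifying that the $t$-dependence is consistent, i.e. that the superscripts and degrees of the one-variable polynomials $q$ appearing in $\sS_{\kb-2\jb-2\ib,n}$ depend only on the total index $\kb-2\jb-2\ib$ and not separately on $\jb$ and $\ib$. Since $\sS_{\lb,n}$ is built from $q_{n-|\lb|}^{\a_{|\lb|}}$ with $\a_{|\lb|}=2|\lb|+2\mu$ (for the generic weight, following the $d=1$ normalisation in \S3.1 and the $d>1$ setup), the block with fixed $\lb=\ib+\jb$ genuinely contributes a single function $\sS_{\kb-2\lb,n}$ scaled by the purely combinatorial sum $\sum_{\jb\le\lb} b_{\kb,\jb}^\mu c_{\kb-2\jb,\lb-\jb}^\mu$, so the factoring through \eqref{eq:bc=delta} is legitimate. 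Once that is observed, the proof is a two-line manipulation; alternatively one can note that \eqref{eq:sVk-def} expresses the vectors $(\sV_{\kb-2\jb,n})_{\jb}$ as a unitriangular linear transform of $(\sS_{\kb-2\jb,n})_{\jb}$ with matrix built from the $c$'s, and Proposition~\ref{prop:sS-sV} simply records that the inverse unitriangular matrix is the one built from the $b$'s, which is exactly the content of \eqref{eq:bc=delta}.
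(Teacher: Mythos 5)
Your proposal is correct and is essentially identical to the paper's own proof: substitute the definition \eqref{eq:sVk-def}, reindex the double sum by the total shift, and collapse the inner coefficient sum via \eqref{eq:bc=delta}. The point you flag about the $t$-dependence is indeed harmless, since $\sS_{\lb,n}$ depends only on the multi-index $\lb$ and on $n$, exactly as you observe.
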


This follows from a simple computation: 
\begin{align*}
\sum_{2 \jb \leq \kb} b_{\kb, \jb}^\mu \sV_{\kb - 2 \jb,n} (x,t) & =
 \sum_{2 \jb \leq \kb} b_{\kb, \jb}^\mu \sum_{2 \ib \leq \kb - 2 \jb} c_{\kb- 2 \jb, \ib}^\mu \sS_{\kb -2 \jb - 2 \ib,n}(x,t) \\
 & = \sum_{2 \jb \leq \kb} b_{\kb, \jb}^\mu \sum_{2 \ib \leq \kb} c_{\kb - 2 \jb, \ib - \jb}^\mu
 \sS_{\kb - 2 \ib,n}(x,t) \\
 & = \sum_{2 \ib \leq \kb} \sS_{\kb - 2 \ib,n}(x,t)
 \sum_{ \jb \leq \ib} b_{\kb, \jb}^\mu c_{\kb - 2 \jb, \ib - \jb}^\mu \\
 & = \sS_{\kb,n}(x,t),
\end{align*}
where the last step follows from \eqref{eq:bc=delta}.

\subsubsection{Monomial Laguerre polynomials}
For the Laguerre weight function $w(t) = t^\b e^{-t}$, we use monic Laguerre polynomials $\wh L_{n-k}^\a$,
so that the monomial polynomial, denoted by $\sV_{\kb,n}^\sL$ with $\sL$ standing for Laguerre, becomes,
by \eqref{eq:sVk-def2}, 
\begin{equation*}
 \sV_{\kb,n}^\sL (x,t) = \sum_{2 \jb \leq \kb} \, x^{\kb- 2\jb} \sum_{m=0}^{|\jb|} B_{\jb,m}
 \wh L_{n-|\kb|+2m}^{2(|\kb|-2 m) + \b+ 2\mu+d-1}(t) t^{2|\jb|-2m} , 
\end{equation*}
where $B_{\jb,m}$ is given by \eqref{eq:Bim}. As in the case of $d =1$, the norm of the monic 
Laguerre polynomial is given by 
$$
 \left \|\wh L_{n-|\kb|+2m}^{2(|\kb|-2 m) + \b+ 2\mu+d-1} \right\|^2 = (n-|\kb|+2m)!(2\mu+d+\b)_{n+|\kb|-2m}.
$$
Hence, for $\kb \in \NN_0^d$, $k = |\kb|$, and $0\leq k\leq n$, the norm of $\sV_{\kb,n}^L$ given in 
Proposition \ref{prop:error_d>1} is specified to 
\begin{align*}
 b_\mu & \int_{\VV^{d+1}} \left | \sV_{\kb,n}^\sL (x,t)\right |^{2}W_{\b,\mu}(x,t) \d x \d t \\
 & = \frac{(\frac{\one}2)_{\kb}}{(\mu+\frac{d+1}2)_k} 
 \sum_{2 \ib \leq \kb} \sum_{m=0}^{|\ib|} B_{\ib,m} 
 \frac{(-k-\mu-\frac{d-1}2)_{|\ib|}}{(-\kb+\f12)_\ib} (n-k+2m)!(2\mu+d+\b)_{n+k-2m},
\end{align*}
where $\kb \in \NN_0^d$, $k = |\kb|$, and $0\leq k\leq n$, 

\subsubsection{Monomial Jacobi polynomials}
For the Jacobi weight function $w(t) = t^\b (1-t)^\g$, we use monic Jacobi polynomials 
$\wh P_{n-k}^{(\a,\b)}(1-2t)$, so that the monomial polynomial, denoted by $\sV_{k,n}^\sJ$, becomes
by \eqref{eq:sVk-def2} 
\begin{equation*}
 \sV_{\kb,n}^\sJ (x,t) = \sum_{2 \jb \leq \kb} \, x^{\kb- 2\jb} \sum_{m=0}^{|\jb|} \f{B_{\jb,m}}{(-2)^{n-| \kb | +2m}}
 \wh P_{n-|\kb|+2m}^{(2(|\kb|-2 m) +\b+ 2\mu+d-1, \g)}(1-2 t) t^{2|\jb|-2m}, 
\end{equation*}
where $B_{\jb,m}$ is given by \eqref{eq:Bim}.
As in the case of $d =1$, the norm of the monic Jacobi polynomial is given by 
\begin{align*}
\left \|q_{n-|\kb|+2m}^{2(|\kb|-2 m) + 2\mu} \right\|^2
 &=
\left \| \f{\wh P_{n-|\kb|+2m}^{(2(|\kb|-2 m) + \b + 2\mu +d-1 , \g)}}{(-2)^{n-| \kb | +2m}} \right\|^2 \\
 & = \frac{(n-|\kb|+2m)! (2\mu + \b +d-1)_{n+|\kb|-2m}(\g +1)_{n-|\kb|+2m}}{(2\mu + \b + \g +d+1)_{2n}
 (n+|\kb|-2m+2\mu + \b + \g +d) _{n-|\kb|+2m}}.
\end{align*}
Hence, for $\kb \in \NN_0^d$, $k = |\kb|$, and $0\leq k\leq n$, the norm of $\sV_{\kb,n}^\sJ$ given in 
Proposition \ref{prop:error_d>1} is specified to 
\begin{align*} 
& b_{\beta,\gamma,\mu}^{J} \int_{\VV^{d+1}} \left | \sV_{k,n}^\sJ (x,t)\right |^2 W_{\b,\g,\mu}(x,t) \d x \d t \\
 & = \sum_{i=0}^{\lfloor \frac{k}{2}\rfloor }[c_{k,i}^\mu]^2 h_{k-2i}^\mu 
 \frac{(n-k+2i)! (2\mu + \b +d-1)_{n+k-2i}(\g +1)_{n-k+2i}}{(2\mu + \b + \g +d+1)_{2n}
 (n+k-2i+2\mu + \b + \g +d) _{n-k+2i}}. \notag
\end{align*}

\section{Orthogonal polynomials via Rodrigues formulas}
\setcounter{equation}{0}

The Laguerre and Jacobi polynomials can be defined by the Rodrigues formulas, which we use to define
$U$-family of orthogonal polynomials on the cone. 

\subsection{Laguerre polynomials on the cone}
Recall that on the infinite cone $\VV^{d+1}$, the Laguerre weight function is defined by 
$$
 W_{\b,\mu}(x,t) = (t^2-\|x\|^2)^{\mu-\f 12} t^\b \e^{-t}, \qquad \mu > -\f12,\quad \b > -d .
$$
We define a family of polynomials $\sU_{\kb,n}^\sL$ by the Rodrigues type formula. 

\begin{defn}\label{def:UknL}
Let $\mu > -\f12$ and $\b > -d $. For $\kb \in \NN_0^d$ and $|\kb| = k$, $0 \le k\le n$, define
$$
\sU_{\kb,n}^\sL (x,t) =\f{1}{ W_{\b,\mu}(x,t)} \f{1}{t^{2k+2\mu+d-1}}
 \f{\partial^{\kb}}{\partial x^{\kb}} (t^2-\| x \|^2)^{k + \mu -\f{1}{2}}
 \f{\partial^{n-k}}{\partial t^{n-k}} t^{n+k+\beta+2\mu+d-1}\e^{-t}.
$$
\end{defn}

We note that the order of the derivatives in the above definition cannot be exchanged since $(t^2-\|x\|^2)^{k+\mu-\f12}$
depends on both $t$ and $x$. 

\begin{thm} \label{prop:UkL}
The family of polynomials $\{\sU_{\kb,n}^\sL: |\kb| \le n\}$ is a basis of $\CV_n(\VV^{d+1}, W_{\b,\mu})$. 
\end{thm}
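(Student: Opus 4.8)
The plan is to verify two facts: first, that each $\sU_{\kb,n}^\sL$ is in fact a polynomial of degree $n$ on the cone (not merely a rational function), and second, that it is orthogonal to $\Pi_{n-1}^{d+1}$; together with a count of the dimension and a linear-independence check, this gives that $\{\sU_{\kb,n}^\sL : |\kb| \le n\}$ spans $\CV_n(\VV^{d+1}, W_{\b,\mu})$. For the polynomiality, I would work from the inside out. The inner operation $\frac{\partial^{n-k}}{\partial t^{n-k}} t^{n+k+\b+2\mu+d-1}\e^{-t}$ produces, by the classical one-variable Rodrigues formula for Laguerre polynomials, a constant multiple of $t^{2k+\b+2\mu+d-1}\e^{-t} L_{n-k}^{2k+\b+2\mu+d-1}(t)$. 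Then one applies $\frac{\partial^{\kb}}{\partial x^{\kb}}$ to $(t^2-\|x\|^2)^{k+\mu-\f12}$: since $|\kb| = k$, each $x$-derivative lowers the exponent by one and brings down a factor linear in $x$, so after $k$ differentiations we get $(t^2-\|x\|^2)^{\mu-\f12}$ times a polynomial in $(x,t)$ that is homogeneous of degree $k$ in $x$ with coefficients polynomial in $t^2$ — this is where the ball-Rodrigues formula \eqref{eq:RodrigB}, applied with the substitution $x \mapsto x/t$, identifies the bracket as $t^{2k}(t^2-\|x\|^2)^{\mu-\f12}$ times (a multiple of) $U_{\kb}(x/t)$, a polynomial of degree $k$ on $\BB^d$. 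Dividing by $W_{\b,\mu}(x,t) = (t^2-\|x\|^2)^{\mu-\f12} t^\b \e^{-t}$ and by $t^{2k+2\mu+d-1}$ cancels all the fractional and exponential factors and all the spurious powers of $t$, leaving a genuine polynomial; tracking the $t$-powers shows its degree is exactly $n$. In fact this computation should express $\sU_{\kb,n}^\sL$ as a constant times $L_{n-k}^{2k+\b+2\mu+d-1}(t)\, t^{k} U_{\kb}(x/t)$ up to lower-order corrections — i.e., essentially of the $\sQ_{\kb,n}$ form of Proposition \ref{prop:OPcone} with the $U$-basis on the ball in the role of $\PP_m$.

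Once the $\sU_{\kb,n}^\sL$ are identified (up to nonzero constants) with polynomials of the shape $q_{n-k}^{\a_k}(t)\, t^k U_{\kb}(x/t) + (\text{lower degree})$, orthogonality is almost immediate from Proposition \ref{prop:OPcone}: since $\{U_{\kb} : |\kb| = m\}$ is a basis of $\CV_m(\BB^d, \sw_\mu)$ and $L_{n-k}^{2k+\b+2\mu+d-1}$ is (a multiple of) the degree-$(n-k)$ orthogonal polynomial for the weight $t^{2k+2\mu+d-1} t^\b \e^{-t} = t^{\a_k} w(t)$ on $\RR_+$ with $\a_k = 2k+2\mu+d-1$, the polynomial $q_{n-k}^{\a_k}(t) t^k U_{\kb}(x/t)$ lies in $\CV_n(\VV^{d+1}, W_{\b,\mu})$. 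Alternatively, and more in the spirit of a Rodrigues-formula proof, one can argue orthogonality directly by integrating $\la \sU_{\kb,n}^\sL, P\ra_\mu$ against an arbitrary $P \in \Pi_{n-1}^{d+1}$ and integrating by parts: the $t$-integration by parts $n-k$ times kills any $t$-polynomial of degree $< n-k$ in the relevant variable, and the $x$-integration by parts $k$ times against $\frac{\partial^{\kb}}{\partial x^{\kb}}$ kills $x$-monomials of degree $< k$ in $x$; the boundary terms vanish because the weight and its derivatives vanish on $\partial\VV^{d+1}$ (the $(t^2-\|x\|^2)^{k+\mu-\f12}$ factor vanishes to positive order on the lateral boundary for $\mu > -\f12$, and $t^{n+k+\b+2\mu+d-1}\e^{-t}$ vanishes at $t=0$ for $\b > -d$ and decays at $\infty$). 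Combining the two reductions shows the pairing with any monomial $t^{n-|\jb|} x^\jb$ of degree $\le n-1$ is zero.

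Finally, for the basis claim I would count: $\dim \CV_n(\VV^{d+1}, W_{\b,\mu}) = \binom{n+d}{n}$, which equals the number of multi-indices $\kb \in \NN_0^d$ with $|\kb| \le n$, so it suffices to show the $\sU_{\kb,n}^\sL$ are linearly independent. This follows from the leading-term structure: grouping by $m = |\kb|$, the polynomials with $|\kb| = m$ have the form $q_{n-m}^{\a_m}(t) t^m U_{\kb}(x/t) + (\deg \le n-1)$, and since $\{t^m U_{\kb}(x/t) : |\kb| = m, 0 \le m \le n\}$ together with the degree reduction are linearly independent (the $U_{\kb}$, $|\kb| = m$, are linearly independent homogeneous-type pieces and different $m$ give different "types"), the claim follows; indeed Proposition \ref{prop:OPcone} already packages exactly this independence statement for bases of the $\sQ_{\kb,n}$ form. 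The main obstacle is the first step — carefully executing the mixed iterated differentiation of $(t^2-\|x\|^2)^{k+\mu-\f12}$ by $\frac{\partial^{\kb}}{\partial x^{\kb}}$ and matching it cleanly to the ball-Rodrigues formula \eqref{eq:RodrigB} under the dilation $x \mapsto x/t$, since the operators in $x$ and $t$ do not commute and one must track the powers of $t$ that are generated and then cancelled; everything after that is bookkeeping with Pochhammer symbols and the already-established machinery.
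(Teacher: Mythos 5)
Your primary route is correct but genuinely different from the paper's. You identify $\sU_{\kb,n}^\sL$ explicitly, via the one-variable Laguerre Rodrigues formula in $t$ and the ball Rodrigues formula \eqref{eq:RodrigB} in $y=x/t$, as a nonzero constant multiple of $L_{n-k}^{2k+2\mu+\b+d-1}(t)\,t^k U_{\kb}(x/t)$, i.e.\ exactly a $\sQ_{\kb,n}$ of Proposition \ref{prop:OPcone} with $\PP_m$ taken to be the $U$-basis on the ball; orthogonality, linear independence, and the basis claim then all follow at once from that proposition. The paper instead proves orthogonality directly: it pairs $\sU_{\kb,n}^\sL$ with an arbitrary $P\in\Pi_{n-1}^{d+1}$, integrates by parts $k$ times in $x$ (the factor $(t^2-\|x\|^2)^{k+\mu-\f12}$ kills the boundary terms), substitutes $x=ty$ so that the surviving $Q=\partial^\kb P/\partial x^\kb$ becomes a polynomial of degree $\le n-k-1$ in $t$ alone, and then integrates by parts $n-k$ times in $t$ — this is precisely your ``alternative'' sketch, where the substitution $x=ty$ is the one step you gloss over but which is essential to convert the total-degree bound on $Q$ into a $t$-degree bound. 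Your route buys more: it yields the closed form of $\sU_{\kb,n}^\sL$ (which the paper only derives later, in the proof of the generating function \eqref{eq:generatUL}) and makes linear independence automatic, whereas the paper's proof leaves the independence of the family implicit. The paper's route is more robust in that it needs no exact constant-matching and transfers verbatim to the Jacobi case.

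One arithmetic slip: $\frac{\partial^{\kb}}{\partial x^{\kb}}(t^2-\|x\|^2)^{k+\mu-\f12}$ equals a constant times $t^{k}(t^2-\|x\|^2)^{\mu-\f12}U_\kb(x/t)$, not $t^{2k}(\cdots)$ — the chain rule under $x=ty$ contributes $t^{-k}$ from the $k$ derivatives and $t^{2k+2\mu-1}$ from the power, and converting $(1-\|y\|^2)^{\mu-\f12}$ back costs $t^{-(2\mu-1)}$. Your final identification ($t^kU_\kb(x/t)$ times the Laguerre polynomial, with no lower-order corrections needed) is the correct one, and with that exponent the degree count gives exactly $n$ as claimed.
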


\begin{proof}
The definition of $\sU_{\kb,n}^\sL$ can be rewritten as 
\begin{align*}
\sU_{\kb,n}^\sL(x,t) = \,& \f{1}{(t^2-\|x\|^2)^{\mu - \f12}}\f{\partial^{\kb}}{\partial x^{\kb}} (t^2-\| x \|^2)^{k + \mu -\f{1}{2}} \\
 & \times \f{1}{t^{2k+\b+2\mu+d-1} \e^{-t}} \f{\partial^{n-k}}{\partial t^{n-k}} ( t^{n+k+\beta+2\mu+d-1}\e^{-t}),
\end{align*}
from which it follows immediately that $\sU_{\kb,n}^\sL$ is indeed a polynomial of degree $n$. We now prove that
it is an orthogonal polynomial. 

Let $P$ be a polynomial of total degree at most $n-1$ in the $(x,t)$ variables. By the definition of $\sU_{\kb,n}^\sL$,
\begin{align*}
 I & :=\int_{\VV^{d+1}} \sU_{\kb,n}^\sL (x,t)P(x,t)W_{\beta,\mu }(x,t)\d x \d t 
 \\
 & = \int_{\VV^{d+1}} P (x,t)\frac{\partial^{\kb}}{\partial x^\kb}
(t^{2}- \|x\|^{2})^{k+\mu -\frac{1}{2}} \d x
\frac{1}{t^{2k+2\mu+d-1}}\frac{\partial^{n-k}}{\partial t^{n-k}}%
 \left(\e^{-t}t^{n+k+2\mu +\beta +d-1}\right)\d t,
\end{align*}
which becomes, after integration by parts $k$ times with respect to $x$, 
\begin{align*}
 I = (-1)^{k} \int_{\VV^{d+1}} & \frac{1}{t^{2k+2\mu +d-1}}(t^2- \|x\|^2)^{k+\mu -\frac{1}{2}} 
 \\ &\times 
\frac{\partial^{n-k}}{\partial t^{n-k}} \left(\e^{-t}t^{n+k+2\mu +\beta +d-1}\right) Q(x,t) \d x \d t,
\end{align*}
where $Q = \frac{\partial^\kb}{\partial x^\kb}P$ is a polynomial of total degree at most 
$n-k-1$ in the $(x,t)$ variables. Under the substitution $x= ty,$ it follows
\begin{align*}
 I & = (-1)^k \int_0^\infty \int_{\BB^d} (1-\|y\|^{2})^{k+\mu -\f12} \frac{\partial ^{n-k}}{\partial t^{n-k}}
 \left(\e^{-t}t^{n+k +2\mu +\b +d-1}\right) Q (ty,t) \d y \d t. 
\end{align*}
Since $Q$ is a polynomial of total degree $n-k-1$ in the $(x,t)$ variables, $Q(ty,t)$ is a polynomial of 
degree at most $n-k-1$ in the $t$ variable. In particular, $\frac{\partial^{n-k}}{\partial t^{n-k}}Q =0$.
Hence, applying integration by parts $n-k$ times with respect to $t$, we conclude that $I=0$. This 
completes the proof. 
\end{proof}

Unlike orthogonal polynomials on the unit ball, the two bases of $\{\sU^\sL_{\kb,n}\}$ and $\{\sV^\sL_{\kb,n}\}$
are not mutually biorthogonal. They satisfy the following partial biorthogonality.

\begin{prop}\label{prop:biorthL}
The polynomials $\sU^\sL_{\kb,n}(x,t)$ and $\sV^\sL_{\kb,n} (x,t)$ are partial biorthogonal in the sense
that, for $\kb, \jb \in \NN_0^d$ with $k = |\kb| \le n$ and $j= |\jb| \le n$, the relation
\begin{align*}
 b_{\b,\mu}^L \int_{\VV^{d+1}} & \sU^\sL_{\kb,n}(x,t) \sV^\sL_{\jb,n}(x,t) W_{\b,\mu}(x,t) \d x \d t \\
 = & k! (n-k)! (-1)^n (2\mu +\beta +d) _{n+k}\frac{(\mu+\frac{1}{2})_{k}}{(\mu +\frac{d+1}{2}) _k} \delta _{\kb,\jb},
\end{align*}
holds if $k \ge j$ or $j > k$ and $\jb- \kb$ has an odd component.
\end{prop}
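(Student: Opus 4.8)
The plan is to combine the orthogonality of $\sU^\sL_{\kb,n}$, established in \thmref{prop:UkL}, with the integration‑by‑parts computation used in its proof. Since $\sU^\sL_{\kb,n}\in\CV_n(\VV^{d+1},W_{\b,\mu})$ and $\sV^\sL_{\jb,n}(x,t)-t^{n-j}x^\jb\in\Pi_{n-1}^{d+1}$, the integral in the statement equals
$$
 I:=b_{\b,\mu}^L\int_{\VV^{d+1}}\sU^\sL_{\kb,n}(x,t)\,t^{n-j}x^\jb\,W_{\b,\mu}(x,t)\,\d x\,\d t .
$$
First I would substitute the Rodrigues formula of Definition~\ref{def:UknL} into $I$ and integrate by parts $k$ times in the $x$ variables. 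The boundary contributions on $\|x\|=t$ vanish because every partial derivative of $(t^2-\|x\|^2)^{k+\mu-\f12}$ of order at most $k-1$ is a polynomial times a strictly positive power of $t^2-\|x\|^2$ (the smallest exponent occurring being $\mu+\f12>0$), hence vanishes on the boundary; this is precisely the mechanism already used for \thmref{prop:UkL}. The net effect is to replace $t^{n-j}x^\jb$ by $\f{\partial^\kb}{\partial x^\kb}(t^{n-j}x^\jb)$. If $\jb\not\ge\kb$ componentwise then $\f{\partial^\kb}{\partial x^\kb}x^\jb=0$, so $I=0$; this already settles the subcases $k>j$, $k=j$ with $\kb\ne\jb$, and $j>k$ with $\jb\not\ge\kb$.

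It then remains to treat $\jb\ge\kb$, where $\f{\partial^\kb}{\partial x^\kb}x^\jb=\f{\jb!}{(\jb-\kb)!}x^{\jb-\kb}$. After the dilation $x=ty$, $y\in\BB^d$, the integral factors into an integral over $\BB^d$ and one over $\RR_+$; a careful accounting of the powers of $t$ shows all of them cancel except $t^{n-k}$ left outside the $t$-derivative, exactly as in \thmref{prop:UkL}, so that
\begin{align*}
 I=(-1)^k\f{\jb!}{(\jb-\kb)!}&\left(b_\mu^\BB\int_{\BB^d}y^{\jb-\kb}(1-\|y\|^2)^{k+\mu-\f12}\,\d y\right)\\
 &\times\f{1}{\Gamma(2\mu+\b+d)}\int_0^\infty t^{n-k}\f{\partial^{n-k}}{\partial t^{n-k}}\left(t^{n+k+\b+2\mu+d-1}\e^{-t}\right)\d t .
\end{align*}
The $t$-integral is evaluated by $n-k$ further integrations by parts — the boundary terms at $0$ vanish because the relevant exponent $n+k+\b+2\mu+d$ is positive (this uses the integrability of $W_{\b,\mu}$, equivalently $2\mu+\b+d>0$) and those at $\infty$ by the exponential decay — and equals $(-1)^{n-k}(n-k)!\,\Gamma(n+k+\b+2\mu+d)$; together with the prefactor $\Gamma(2\mu+\b+d)^{-1}$ this produces the Pochhammer symbol $(2\mu+\b+d)_{n+k}$. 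The remaining ball integral vanishes as soon as some component of $\jb-\kb$ is odd, by parity in that coordinate, which disposes of the last subcase, $j>k$ with $\jb\ge\kb$ and $\jb-\kb$ having an odd component.

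Finally, on the diagonal $\kb=\jb$ one has $\f{\jb!}{(\jb-\kb)!}=\kb!$ and $y^{\jb-\kb}=1$, so it only remains to evaluate $b_\mu^\BB\int_{\BB^d}(1-\|y\|^2)^{k+\mu-\f12}\,\d y$: writing it in polar coordinates as a Beta integral gives $\f{\pi^{d/2}\Gamma(k+\mu+\f12)}{\Gamma(k+\mu+\f{d+1}2)}$, and multiplying by $b_\mu^\BB=\f{\Gamma(\mu+\f{d+1}2)}{\pi^{d/2}\Gamma(\mu+\f12)}$ telescopes to $\f{(\mu+\f12)_k}{(\mu+\f{d+1}2)_k}$. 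Collecting the sign $(-1)^n=(-1)^k(-1)^{n-k}$, the factorial $\kb!$, the factor $(n-k)!$, the Pochhammer symbol $(2\mu+\b+d)_{n+k}$ and the ratio $\f{(\mu+\f12)_k}{(\mu+\f{d+1}2)_k}$ yields the right-hand side of the asserted identity. I expect the only delicate points to be the justification that the boundary terms vanish in both rounds of integration by parts — this is where $\mu>-\f12$ and $2\mu+\b+d>0$ are used — and the bookkeeping of the $t$-powers after the dilation; beyond that, the proof is the argument of \thmref{prop:UkL} together with the elementary parity observation.
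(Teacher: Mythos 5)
Your proposal is correct and follows essentially the same route as the paper's proof: reduce to the monomial $t^{n-j}x^{\jb}$ by orthogonality, integrate by parts $k$ times in $x$, kill the off-diagonal cases by $\partial^{\kb}x^{\jb}=0$ or by parity, then evaluate the diagonal via the dilation $x=ty$ and $n-k$ integrations by parts in $t$; your extra care with the boundary terms and the $t$-power bookkeeping only makes explicit what the paper leaves implicit. Note that, like the paper's own computation, you obtain the factor $\kb!$ rather than the $k!=|\kb|!$ appearing in the statement, which is an inconsistency in the paper rather than in your argument.
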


\begin{proof}
Since $\sU^{\sL}_{\kb,n}$ is an orthogonal polynomial of degree $n$ and $\sV^\sL_{\kb,n}$ is monomial, 
we obtain immediately that 
\begin{align*}
\left \langle \sU^{\sL}_{\kb,n}, \sV^{\sL}_{\jb,n} \right \rangle 
\,& = b_{\b,\mu}^{L} \int_{\VV^{d+1}} \sU^{\sL}_{\kb,n} (x,t)\sV^{\sL}_{\jb,n}(x,t)W_{\beta,\mu}(x,t) \d x \d t \\
& = b_{\b,\mu}^{L} \int_{\VV^{d+1}} \sU^{\sL}_{\kb,n}(x,t) t^{n-|\jb|}x^\jb W_{\beta,\mu}(x,t) \d x \d t. 
\end{align*}
Integrating by parts $\kb$ times with respect to $x$ as in the previous theorem, we obtain 
\begin{align*}
\left \langle \sU^{\sL}_{\kb,n}, \sV^{\sL}_{\jb,n} \right \rangle =\,& (-1)^k b_{\b,\mu}^L
 \int_{\VV^{d+1}} \frac{t^{n-k}}{t^{2k+2\mu +d-1}}(t^{2}-\|x\|^2)^{k +\mu -\frac{1}{2}}\\
&\times \frac{\partial ^{n-k}}{\partial t^{n-k}} \left(\e^{-t} t^{n+k+2\mu +\beta +d-1}\right)
\frac{\partial^\kb} {\partial x^\kb} x^{\jb} \d x \d t,
\end{align*}
which is zero if $k > j$. If, however, $j > k$ and there is an $i$ such that $j_i - k_i$ is odd, then
$\frac{\partial^\kb} {\partial x^\kb} x^{\jb} = \frac{\jb!}{(\jb - \kb)!} x^{\jb-\kb}$ has a component $x_i^{j_i-k_i}$ of odd power, so that the last integral is zero if we make a change of variable $x_i \mapsto - x_i$.
Finally, for $\jb = \kb$, 
we use substitution $x=ty$ and integrate by parts with respect to $t$, as in the previous proof, to obtain 
\begin{align*}
\left \langle \sU^{\sL}_{\kb,n}, \sV^{\sL}_{\kb,n} \right \rangle 
 = \, & (-1)^k \kb! b_\mu^\BB \int_{\mathbb{B}^{d}} (1- \|y\|^2)^{k+\mu -\frac12}\d y \\
 & \times \, \frac{1}{\Gamma(2\mu + \b+d)} \int_0^\infty t^{n- k}
 \frac{\partial ^{n-k}}{\partial t^{n-k}} \left(\e^{-t} t^{n+k+2\mu +\beta +d-1}\right) \d t \\
 = \, & (-1)^n \kb! \frac{b_\mu^\BB}{b_{\mu+k}^\BB} 
 \frac{(n-k)!}{\Gamma(2\mu + \b+d)} \int_0^\infty \e^{-t} t^{n+k+2\mu +\beta +d-1} \d t \\
 = \, & (-1)^n \kb! (n-k)! (2\mu+\b+d)_{n+k} \frac{(\mu+\frac{1}{2}) _{k}}{(\mu +\frac{d+1}{2}) _k},
\end{align*}
which completes the proof. 
\end{proof}
 
We can find a basis in terms of $\sU_{\kb,n}^\sL$ that is biorthogonal with respect to the basis of $\sV_{\kb,n}^\sL$. This is illustrated in the following for the case $d = 1$.

\begin{prop}
For $d =1$, define the polynomials $\{\sY_{n-k,n}^\sL: 0 \le k \le n\}$ recursively as follows: for
$k= 0, 1$, $\sY_{n-k,n}^\sL = \sU_{n-k, n}^\sL$, and for $k =2, 3,\ldots,n$,
\begin{equation} \label{eq:sYkn}
 \sY_{n-k,n}^\sL (x,t) = \sU_{n-k,n}^{\sL}- \sum_{1 \le i \le  \lfloor k/2\rfloor} c_{i,k} \sY_{n-k+2i,n}^\sL 
\end{equation}
recursively, where the coefficients are 
\begin{equation*}
c_{i,k}=\frac{k!\left( \mu +i+n-k+1\right) _{i}}{2^{2i} 
 \left( k-2i\right)!i!\left( \mu +n-k+1/2\right) _{2i}\left( 2\mu +\beta +2n-k+1\right)_{2i}}.
\end{equation*}%
Then $\{\sY_{n-k,n}^\sL: 0 \le k \le n\}$ is a basis of $\CV_n(\VV^2,W_{\b,\mu})$ and it is mutually 
biorthogonal to the basis $\{\sV_{k,n}^\sL: 0 \le k \le n\}$.
\end{prop}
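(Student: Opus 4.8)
The plan is to prove, by induction on $k$, the following sharpening of the assertion: for $0\le k\le n$ and all $0\le j\le n$,
\begin{equation}\label{eq:Ytarget}
 b_{\b,\mu}^L \int_{\VV^2} \sY_{n-k,n}^\sL(x,t)\,\sV_{j,n}^\sL(x,t)\,W_{\b,\mu}(x,t)\,\d x\,\d t = h_{n-k}\,\delta_{n-k,j},
\end{equation}
where
$$
 h_m := (-1)^n m!\,(n-m)!\,(2\mu+\b+1)_{n+m}\,\frac{(\mu+\tfrac12)_m}{(\mu+1)_m}
$$
is the diagonal value $\langle \sU_{m,n}^\sL,\sV_{m,n}^\sL\rangle$ supplied by Proposition~\ref{prop:biorthL} with $d=1$; it is nonzero because $\mu>-\tfrac12$ and $2\mu+\b+1>0$. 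Once \eqref{eq:Ytarget} is established, the proposition follows at once: each $\sY_{n-k,n}^\sL$ is a linear combination of the $\sU_{m,n}^\sL$ and hence lies in $\CV_n(\VV^2,W_{\b,\mu})$ by Theorem~\ref{prop:UkL}, while a set of $n+1$ elements of the $(n+1)$-dimensional space $\CV_n(\VV^2,W_{\b,\mu})$ which is biorthogonal to the basis $\{\sV_{k,n}^\sL\}$ with nonvanishing diagonal is necessarily linearly independent, hence a basis.

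The computational core is the evaluation of the mixed pairings $\langle \sU_{m,n}^\sL,\sV_{j,n}^\sL\rangle$, extending the computation in the proof of Proposition~\ref{prop:biorthL}. Because $\sU_{m,n}^\sL$ has degree $n$ and is orthogonal to $\Pi_{n-1}^2$, we may replace $\sV_{j,n}^\sL$ by its leading monomial $t^{n-j}x^j$. Inserting Definition~\ref{def:UknL}, cancelling the weight factor $W_{\b,\mu}$, integrating by parts $m$ times in $x$ (the boundary terms at $x=\pm t$ vanish because $\mu>-\tfrac12$, exactly as in the proof of Theorem~\ref{prop:UkL}) and then $n-m$ times in $t$ (the boundary terms at $t=0$ vanish because $t^{n-m}$ and its first $n-m-1$ derivatives do, and $\e^{-t}$ disposes of the endpoint at infinity), one is reduced to the Beta integral $\int_{-1}^1(1-u^2)^{m+\mu-\frac12}u^{j-m}\,\d u$ and the Gamma integral $\int_0^\infty t^{n+m+\b+2\mu}\e^{-t}\,\d t$. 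This shows that $\langle \sU_{m,n}^\sL,\sV_{j,n}^\sL\rangle=0$ unless $j=m+2\ell$ with $\ell\ge0$, in which case
\begin{equation}\label{eq:mixedpair}
 \langle \sU_{m,n}^\sL,\sV_{m+2\ell,n}^\sL\rangle = (-1)^n\,\frac{(m+2\ell)!}{(2\ell)!}\,(n-m)!\,(\tfrac12)_\ell\,\frac{(\mu+\tfrac12)_m}{(\mu+1)_{m+\ell}}\,(2\mu+\b+1)_{n+m}.
\end{equation}
For $\ell=0$ this is $h_m$; the vanishing when $j<m$ or $j-m$ is odd is exactly what Proposition~\ref{prop:biorthL} records, and the genuinely new content is the value \eqref{eq:mixedpair} for $\ell\ge1$ --- precisely the cases left uncontrolled there.

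With \eqref{eq:mixedpair} at hand the induction runs smoothly. The cases $k=0,1$ are immediate, since $\sY_{n-k,n}^\sL=\sU_{n-k,n}^\sL$ and \eqref{eq:mixedpair} forces $\langle\sU_{n-k,n}^\sL,\sV_{j,n}^\sL\rangle$ to vanish for every admissible $j\neq n-k$ (a nonzero value would require $j=n-k+2\ell>n$ for some $\ell\ge1$). For $k\ge2$, expanding \eqref{eq:sYkn} and applying the inductive hypothesis to each $\sY_{n-k+2i,n}^\sL$ (which lives at level $k-2i<k$) collapses the correction sum to
$$
 \langle \sY_{n-k,n}^\sL,\sV_{j,n}^\sL\rangle = \langle \sU_{n-k,n}^\sL,\sV_{j,n}^\sL\rangle - \sum_{1\le i\le\lfloor k/2\rfloor} c_{i,k}\,h_{n-k+2i}\,\delta_{n-k+2i,j}.
$$
If $j<n-k$, or $j-(n-k)$ is odd, both terms vanish; if $j=n-k$, only $\langle \sU_{n-k,n}^\sL,\sV_{n-k,n}^\sL\rangle=h_{n-k}$ survives; and if $j=n-k+2\ell$ with $1\le\ell\le\lfloor k/2\rfloor$ (the only remaining possibility, since $j\le n$ forces $\ell\le\lfloor k/2\rfloor$), the right-hand side equals $\langle \sU_{n-k,n}^\sL,\sV_{n-k+2\ell,n}^\sL\rangle-c_{\ell,k}\,h_{n-k+2\ell}$, which is zero precisely when $c_{\ell,k}=\langle \sU_{n-k,n}^\sL,\sV_{n-k+2\ell,n}^\sL\rangle/h_{n-k+2\ell}$. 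Substituting \eqref{eq:mixedpair} with $m=n-k$ and the formula for $h_{n-k+2\ell}$, and simplifying the Pochhammer symbols with $(a)_{r+s}=(a)_r(a+r)_s$, $(a)_{2r}=4^r(\tfrac a2)_r(\tfrac{a+1}2)_r$ and $(\tfrac12)_\ell=(2\ell)!/(4^\ell\ell!)$, gives exactly the stated $c_{i,k}$ (with $i=\ell$), establishing \eqref{eq:Ytarget} at level $k$.

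The step I expect to be the main obstacle is \eqref{eq:mixedpair}: executing the two rounds of integration by parts with the non-integer exponents handled correctly and all boundary contributions shown to vanish, followed by the Pochhammer bookkeeping that matches the ratio $\langle \sU_{n-k,n}^\sL,\sV_{n-k+2\ell,n}^\sL\rangle/h_{n-k+2\ell}$ to the closed form of $c_{i,k}$. The induction itself and the deduction that the $\sY_{n-k,n}^\sL$ form a basis are then purely formal.
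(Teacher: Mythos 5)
Your proposal is correct and follows essentially the same route as the paper: an induction that enforces $\langle \sY_{n-k,n}^\sL,\sV_{j,n}^\sL\rangle=h_{n-k}\delta_{n-k,j}$, reduction of the mixed pairing $\langle \sU_{n-k,n}^\sL,\sV_{n-k+2\ell,n}^\sL\rangle$ to a Beta and a Gamma integral via the two rounds of integration by parts, and solving $c_{\ell,k}=\langle \sU_{n-k,n}^\sL,\sV_{n-k+2\ell,n}^\sL\rangle/h_{n-k+2\ell}$. Your formula for the mixed pairing and the resulting Pochhammer simplification both match the paper's computation exactly.
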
 

\begin{proof}
We define $\sY_{n-k,n}^\sL$ as in \eqref{eq:sYkn} and determine the coefficients $c_{i,k}$ by requiring 
$$
I_{n-k,n-j} := \la \sY^{\sL}_{n-k,n}, \sV^{\sL}_{n-j,n} \ra = h_{n-k} \delta_{k,j}, 
$$
where, by \eqref{eq:sYkn} and the required biorthogonality,
$$
 h_{n-k} = \la \sU^{\sL}_{n-k,n}, \sV^{\sL}_{n-k,n} \ra. 
$$
The case $k = 0$ and $1$ holds trivially by Proposition \ref{prop:biorthL}. Assume that $\sY_{n-k+2i,n}^\sL$ 
has been determined for $i = 1, \ldots , \lfloor k/2 \rfloor$ for $2 \le k \le n$. We now define $\sY_{n-k,n}^\sL$
as given in~\eqref{eq:sYkn}. Then
$$
 I_{n-k, n-j} = \la \sU_{n-k,n}^\sL, \sV_{n-j,n}^\sL \ra - h_{n-j} \sum_{1 \le l \le \lfloor k/2 \rfloor} c_{l,k} \delta_{n-k+2l, n-j},
$$
which is zero if $j > k$ or $k$ and $j$ have different parity when $j < k$. Thus, we only need to consider 
$j = k-2i$, for which $I_{n-k,n-j} = 0$ becomes 
$$
 0 = \la \sU_{n-k,n}^\sL, \sV_{n-k+2i,n}^\sL \ra - c_{i,k} h_{n-k+2i}, \quad 1 \le i \le  \lfloor k/2 \rfloor,
$$
which gives immediately 
$$
c_{i,k} = \la \sU_{n-k,n}^\sL, \sV_{n-k+2i,n}^\sL \ra / h_{n-k+2i}.
$$ 
The value of $h_{n-k+2i}$ is already given in Proposition \ref{prop:biorthL}. Furthermore, following the 
proof there, we also obtain 
\begin{align*}
\left \langle \sU_{k,n}^\sL, \sV_{k+2i,n}^\sL \right \rangle 
 = & \, b_{\b,\mu}^L(-1)^k \frac{(k+2i)!}{(2i)!} \int_{-1}^1 y^{2i} (1-y^2)^{k+\mu-\f12} \d y \\
& \times \int_0^\infty t^{n-k} \frac{\partial^{n-k}} {\partial t^{n-k}} \left( t^{n+k+\b+2\mu}e^{-t} \right) \d t \\
= & (-1)^n \frac{(k+2 i)!(n-k)! (2\mu+\b+1)_{n+k}(\mu+\f12)_k (\f12)_i}{(2i)!(\mu+1)_{k+i}}, 
\end{align*}
where the last step follows from integrating by parts $n-k$ times in the $t$ variable and evaluating the
two integrals of one variable. Using this identity with $k$ replaced by $n-k$, we obtain the formula of
the coefficient $c_{i,k}$ after rewriting the Pochhammer symbols. 
\end{proof}

\subsection{Jacobi polynomials on the cone}
Recall that on the finite cone $\VV^{d+1}$, the Jacobi weight function is defined by 
$$
 W_{\b,\g,\mu}(x,t) = (t^2-\|x\|^2)^{\mu-\f 12} t^\b (1-t)^\g, \qquad \mu > -\f12,\quad \b > -d ,\quad \g > -1.
$$
We define a family of polynomials $\sU_{\kb,n}^\sL$ by the Rodrigues type formula. 

\begin{defn}
Let $\mu > -\f12$, $\b > -d$ and $\g > -1$. For $\kb \in \NN_0^d$ and $|\kb| = k$, $0 \le k\le n$, define
\begin{multline*}
\sU_{\kb,n}^\sJ (x,t) =\frac{1}{W_{\b,\g,\mu}(x,t)}
 \frac{1}{t^{2k+2\mu +d-1}}\frac{\partial^\kb}{\partial x^\kb} \left(t^{2}- \|x\|^2\right)^{k+\mu-\frac12}
 \\ \times
\frac{\partial^{n-k}}{\partial t^{n-k}} \left[ t^{n+k + \b+2\mu+d-1}(1-t)^{\g+n-k} \right].
\end{multline*}
\end{defn}

\begin{thm}
The family of polynomials $\{\sU_{\kb,n}^\sJ: |\kb| \le n\}$ is a basis of $\CV_n(\VV^{d+1}, W_{\b,\g,\mu})$. 
\end{thm}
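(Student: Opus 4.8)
The plan is to mirror the proof of Theorem~\ref{prop:UkL}, noting along the way that the statement also drops out of Proposition~\ref{prop:OPcone}. First I would distribute the factor $1/W_{\b,\g,\mu}$ in the definition of $\sU_{\kb,n}^\sJ$, so that it becomes a product of an ``$x$-block''
$A(x,t)=(t^2-\|x\|^2)^{-(\mu-\f12)}\frac{\partial^\kb}{\partial x^\kb}(t^2-\|x\|^2)^{k+\mu-\f12}$
and a ``$t$-block''
$B(t)=t^{-(2k+\b+2\mu+d-1)}(1-t)^{-\g}\frac{\partial^{n-k}}{\partial t^{n-k}}\bigl(t^{n+k+\b+2\mu+d-1}(1-t)^{\g+n-k}\bigr)$.
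The chain rule shows $A$ is a genuine polynomial (after $|\kb|=k$ differentiations the lowest power of $t^2-\|x\|^2$ present is $\mu-\f12$), and a scaling check shows it is homogeneous of degree $(k+2\mu-1)-(2\mu-1)=k$ in $(x,t)$. Since $n+k+\b+2\mu+d-1=(n-k)+(2k+\b+2\mu+d-1)$, the block $B$ is exactly the classical Jacobi Rodrigues formula on $[0,1]$, so $B(t)=(n-k)!\,P_{n-k}^{(2k+2\mu+\b+d-1,\,\g)}(1-2t)$, of degree $n-k$. Hence $\sU_{\kb,n}^\sJ$ is a polynomial of degree $n$.

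Next I would prove orthogonality exactly as in Theorem~\ref{prop:UkL}. For $P\in\Pi_{n-1}^{d+1}$ the weight cancels, leaving $\int_{\VV^{d+1}}P\,\frac{\partial^\kb}{\partial x^\kb}(t^2-\|x\|^2)^{k+\mu-\f12}\,t^{-(2k+2\mu+d-1)}\frac{\partial^{n-k}}{\partial t^{n-k}}\bigl(t^{n+k+\b+2\mu+d-1}(1-t)^{\g+n-k}\bigr)\,\d x\,\d t$. Integrate by parts $k$ times in $x$: the boundary contributions on $\|x\|=t$ vanish because, for $j\le k-1$, the $j$-th $x$-derivative of $(t^2-\|x\|^2)^{k+\mu-\f12}$ still carries a positive power of $t^2-\|x\|^2$ (this is where $\mu>-\f12$ enters), and one is left with $Q:=\frac{\partial^\kb}{\partial x^\kb}P$ of total degree $\le n-1-k$, up to the sign $(-1)^k$. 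Now set $x=ty$; the powers of $t$ cancel exactly ($t^d(t^2)^{k+\mu-\f12}/t^{2k+2\mu+d-1}=1$), leaving $(-1)^k\int_0^1\!\!\int_{\BB^d}(1-\|y\|^2)^{k+\mu-\f12}\frac{\partial^{n-k}}{\partial t^{n-k}}\bigl(t^{n+k+\b+2\mu+d-1}(1-t)^{\g+n-k}\bigr)Q(ty,t)\,\d y\,\d t$. Integrating by parts $n-k$ times in $t$, the boundary terms vanish at $t=1$ because $(1-t)^{\g+n-k}$ and its derivatives of order $\le n-k-1$ retain a power of $1-t$ positive when $\g>-1$, and at $t=0$ because the surviving power of $t$ is at least $2k+\b+2\mu+d$, which is $>0$ (for $k=0$ this reads $\b+2\mu+d>0$, the condition needed for $W_{\b,\g,\mu}$ to be integrable over $\VV^{d+1}$). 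Since $Q(ty,t)$ is a polynomial in $t$ of degree $\le n-1-k<n-k$, its $(n-k)$-th $t$-derivative is $0$, so the integral vanishes and $\sU_{\kb,n}^\sJ\in\CV_n(\VV^{d+1},W_{\b,\g,\mu})$.

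It remains to see that the $\binom{n+d}{n}$ polynomials $\{\sU_{\kb,n}^\sJ:|\kb|\le n\}$ are linearly independent, since this number equals $\dim\CV_n(\VV^{d+1},W_{\b,\g,\mu})$. The quickest argument reuses the first step: under $x=ty$ the block $A$ becomes $t^{k}(1-\|y\|^2)^{-(\mu-\f12)}\frac{\partial^\kb}{\partial y^\kb}(1-\|y\|^2)^{k+\mu-\f12}$, which by the ball Rodrigues formula \eqref{eq:RodrigB} is a nonzero multiple of $t^{k}U_\kb(x/t)$. Hence $\sU_{\kb,n}^\sJ$ coincides, up to a nonzero scalar, with the basis element $\sQ_{\kb,n}$ of Proposition~\ref{prop:OPcone} associated with the ball basis $\{U_\kb:|\kb|=m\}$ of $\CV_m(\BB^d,\sw_\mu)$ and the one-variable orthogonal polynomial $P_{n-k}^{(2k+2\mu+\b+d-1,\,\g)}(1-2t)$; that proposition then gives the basis property at once. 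Equivalently, one may compare degree-$n$ parts: the top-degree part of $\sU_{\kb,n}^\sJ$ is a nonzero multiple of $t^{n}U_\kb(x/t)$, and these are linearly independent as $\kb$ ranges over $\NN_0^d$ with $|\kb|\le n$, since the order in $t$ separates different $|\kb|$ and $\{U_\kb:|\kb|=k\}$ is a basis of $\CV_k(\BB^d,\sw_\mu)$ for each $k$.

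I expect the only genuine work to be the boundary-term bookkeeping in the middle paragraph: keeping every exponent positive through all $k+(n-k)=n$ integrations by parts, and observing that the $x$-integration by parts is legitimate precisely because $\|x\|=t$ is where $t^2-\|x\|^2$ vanishes, so that the two non-commuting blocks of derivatives in the Rodrigues expression cause no trouble. The conceptual shortcut that makes the rest short is recognizing the Gegenbauer/ball and Jacobi Rodrigues formulas sitting inside the definition of $\sU_{\kb,n}^\sJ$. Everything else is routine manipulation of Pochhammer symbols, which I would not carry out in the sketch.
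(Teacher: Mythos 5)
Your proof is correct and follows essentially the same route as the paper's: rewrite the Rodrigues expression to exhibit $\sU_{\kb,n}^\sJ$ as a polynomial of degree $n$, then integrate by parts $k$ times in $x$ and, after substituting $x=ty$, $n-k$ times in $t$ to kill $\la \sU_{\kb,n}^\sJ, P\ra$ for $\deg P\le n-1$. The only place you go beyond the paper is the explicit linear-independence step, where you identify $\sU_{\kb,n}^\sJ$ up to a nonzero constant with the product basis element $\sQ_{\kb,n}$ of Proposition~\ref{prop:OPcone} built from the ball polynomials $U_\kb$ and the Jacobi polynomials $P_{n-k}^{(2k+2\mu+\b+d-1,\g)}(1-2t)$; the paper leaves this count implicit, and your addition (together with the careful boundary-term bookkeeping) is a correct completion of the same argument.
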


\begin{proof}
The definition of $\sU_{\kb,n}^\sJ$ can be rewritten as 
\begin{multline*}
\sU_{\kb,n}^\sJ(x,t) = \f{1}{(t^2-\|x\|^2)^{\mu - \f12}}\f{\partial^{\kb}}{\partial x^{\kb}} (t^2-\| x \|^2)^{k + \mu -\f{1}{2}} \\
 \times \f{1}{t^{2k+\b+2\mu+d-1} (1-t)^\g} \f{\partial^{n-k}}{\partial t^{n-k}} \left[ t^{n+k +\b+2\mu+d-1}(1-t)^{\g+n-k} \right],
\end{multline*}
from which it follows immediately that $\sU_{\kb,n}^\sJ$ is indeed a polynomial of degree $n$. We can prove that
it is an orthogonal polynomial similar to the case of the Laguerre polynomials. Let $P$ be a polynomial of degree $n-1$ 
in $(x,t)$ variable. Integrating by parts $k$ times with respect to $x$ gives 
\begin{align*}
 \int_{\VV^{d+1}} \sU_{\kb,n}^\sJ(x,t)& P (x,t)W_{\b,\g,\mu}(x,t) \d x \d t \\
 = \,& (-1)^k \int_{\VV^{d+1}} \frac{1}{t^{2k+2\mu +d-1}}(t^{2}-\|x\|^2)^{k+\mu -\frac{1}{2}} \\
& \times \frac{\partial^{n-k}}{\partial t^{n-k}}
\left[ (1-t)^{\gamma +n-k}t^{n+k+2\mu +\beta +d-1}\right]Q(x,t)\d x \d t,
\end{align*}
which becomes zero upon integrating by parts $n-k$ times with respect to $t$. 
\end{proof}

\begin{thm}
The polynomials $\sU^\sJ_{\kb,n}(x,t)$ and $\sV^\sJ_{\kb,n} (x,t)$ are partially biorthogonal in the
sense that, for $\kb, \jb \in \NN_0^d$ and $k = |\kb| \le n$ and $j= |\jb| \le n$, 
\begin{align} \label{eq:biortho-J}
 b_{\b,\g,\mu}^J \int_{\VV^{d+1}} & \sU^\sJ_{\kb,n}(x,t) \sV^\sJ_{\jb,n}(x,t) W_{\b,\g,\mu}(x,t) \d x \d t \\
 = & k!(n-k)! (-1)^{n} \frac{(\mu +\frac{1}{2})_k (2\mu +\b+d) _{n+k} (\g+1)_{n-k}}
 {(\mu +\frac{d+1}{2})_k (2\mu+\b+\g+d+1)_{2n}} \delta_{\kb,\jb} \notag
\end{align}
holds if $k \ge j$ or $j > k$ and $\jb- \kb$ has an odd component.
\end{thm}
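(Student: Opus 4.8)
The plan is to mirror the proof of Proposition~\ref{prop:biorthL} almost verbatim, replacing the one-variable factor $\e^{-t}\,t^{n+k+2\mu+\b+d-1}$ by $t^{n+k+2\mu+\b+d-1}(1-t)^{\g+n-k}$ and carrying the extra parameter $\g$ through the computation. Since the preceding theorem shows $\sU^\sJ_{\kb,n}\in\CV_n(\VV^{d+1},W_{\b,\g,\mu})$, it is orthogonal to $\Pi_{n-1}^{d+1}$, so in $\la\sU^\sJ_{\kb,n},\sV^\sJ_{\jb,n}\ra$ only the leading monomial of $\sV^\sJ_{\jb,n}=t^{n-j}x^\jb+\sR_\jb$, $\sR_\jb\in\Pi_{n-1}^{d+1}$, contributes; hence I would evaluate $\la\sU^\sJ_{\kb,n},t^{n-j}x^\jb\ra$. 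Substituting the Rodrigues formula, cancelling $W_{\b,\g,\mu}$, and integrating by parts $k$ times in the $x$ variables transfers the $x$-derivatives onto $x^\jb$; the boundary terms over $\|x\|=t$ vanish because $(t^2-\|x\|^2)^{k+\mu-\f12}$ and its $x$-derivatives of order $\le k-1$ all carry an exponent $\ge\mu+\f12>0$. We are then left with $\frac{\partial^\kb}{\partial x^\kb}x^\jb$ inside the integral: this is $0$ whenever some $k_i>j_i$ (in particular whenever $k>j$, and when $k=j$ with $\kb\ne\jb$), while if $\jb-\kb\in\NN_0^d$ but some component $j_i-k_i$ is odd, the surviving factor $x^{\jb-\kb}$ is odd in $x_i$ whereas the rest of the integrand is even in $x_i$, so $x_i\mapsto-x_i$ forces the integral to vanish. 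This disposes of both vanishing alternatives, leaving only the diagonal term $\jb=\kb$.

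For $\jb=\kb$ I would argue as in the Laguerre case. After the $x$-integration by parts we obtain $(-1)^k\kb!$ times an integral which, under the dilation $x=ty$, splits as a product of the ball integral $\int_{\BB^d}(1-\|y\|^2)^{k+\mu-\f12}\d y = 1/b^\BB_{\mu+k}$ and a one-variable integral; a short check shows that the powers of $t$ supplied by $t^{n-k}$, by the Jacobian $t^d$, by $(t^2-\|x\|^2)^{k+\mu-\f12}=t^{2k+2\mu-1}(1-\|y\|^2)^{k+\mu-\f12}$, and by the normalizing factor $t^{-(2k+2\mu+d-1)}$ recombine to exactly $t^{n-k}$, so the $t$-integral is $\int_0^1 t^{n-k}\frac{\partial^{n-k}}{\partial t^{n-k}}\bigl[t^{n+k+2\mu+\b+d-1}(1-t)^{\g+n-k}\bigr]\d t$. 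Integrating by parts $n-k$ times in $t$ — the boundary terms at $t=0$ vanish since every relevant power of $t$ is strictly positive (using $\b>-d$, $\mu>-\f12$), and at $t=1$ since the exponent of $(1-t)$ stays $\ge\g+1>0$ — reduces this to $(-1)^{n-k}(n-k)!\,B(n+k+2\mu+\b+d,\ n-k+\g+1)$.

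It then remains to assemble the constants. The sign is $(-1)^k(-1)^{n-k}=(-1)^n$; the factor $\frac{\partial^\kb}{\partial x^\kb}x^\kb=\kb!$ gives the combinatorial prefactor; $b_\mu^\BB/b^\BB_{\mu+k}=(\mu+\f12)_k/(\mu+\frac{d+1}2)_k$ by the explicit value of $b_\mu^\BB$; and, writing the Beta integral as a quotient of Gamma functions and multiplying by the Jacobi normalization $b^J_{\b,\g,\mu}=c_{2\mu+\b+d-1,\g}\,b_\mu^\BB=\frac{\Gamma(2\mu+\b+\g+d+1)}{\Gamma(2\mu+\b+d)\Gamma(\g+1)}\,b_\mu^\BB$, the Gamma quotients collapse through $\Gamma(a+m)/\Gamma(a)=(a)_m$ into $(2\mu+\b+d)_{n+k}$, $(\g+1)_{n-k}$, and $1/(2\mu+\b+\g+d+1)_{2n}$, which is precisely the right-hand side of \eqref{eq:biortho-J}. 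I expect the only delicate points, exactly as for the Laguerre polynomials, to be bookkeeping: verifying that the boundary contributions genuinely vanish over the admissible parameter ranges (in particular at the endpoint $t=1$, which is absent in the Laguerre case), that the powers of $t$ recombine correctly after the substitution $x=ty$, and that the Pochhammer/Gamma simplifications are carried out without slips; no idea beyond the Laguerre argument is needed.
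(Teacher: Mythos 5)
Your proposal is correct and follows essentially the same route as the paper: reduce to the leading monomial via orthogonality, integrate by parts $k$ times in $x$ to get the two vanishing cases (some $k_i>j_i$, or an odd component of $\jb-\kb$), then for $\jb=\kb$ substitute $x=ty$, integrate by parts $n-k$ times in $t$, and assemble the constants through the Beta integral and the normalizations $b_\mu^\BB/b_{\mu+k}^\BB$ and $c_{\a,\g}$. The bookkeeping you flag (recombination of $t$-powers to $t^{n-k}$, vanishing of boundary terms, Gamma-to-Pochhammer simplification) all checks out against the paper's computation.
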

 
\begin{proof}
Since $\sU^{\sJ}_{\kb,n}$ is orthogonal to polynomials of lower degree and $\sV^\sJ_{\kb,n}$ is monomial, we obtain 
\begin{align*}
\left \langle \sU^{\sJ}_{\kb,n}, \sV^{\sJ}_{\jb,n} \right \rangle 
\,& = b_{\b,\g,\mu}^{J} \int_{\VV^{d+1}} \sU^{\sJ}_{\kb,n} (x,t)\sV^{\sJ}_{\jb,n}W_{\b,\g,\mu}(x,t) \d x \d t \\
& = b_{\b,\g,\mu}^{J} \int_{\VV^{d+1}} \sU^{\sJ}_{\kb,n}(x,t) t^{n-|\jb|}x^\jb W_{\b,\g,\mu}(x,t) \d x \d t. 
\end{align*}
Integrating by parts $\kb$ times with respect to $x$, we obtain 
\begin{align*}
\left \langle \sU^{\sJ}_{\kb,n}, \sV^{\sJ}_{\jb,n} \right \rangle =\,& (-1)^k b_{\b,\g,\mu}^J
 \int_{\VV^{d+1}} \frac{t^{n-k}}{t^{2k+2\mu +d-1}}(t^{2}-\|x\|^2)^{k +\mu -\frac{1}{2}}\\
&\times \frac{\partial ^{n-k}}{\partial t^{n-k}} \left[ (1-t)^{\g+n-k} t^{n+k+2\mu +\beta +d-1}\right] \frac{\partial^\kb} {\partial x^\kb} x^{\jb}\, \d x \d t,
\end{align*}
from which the partially biorthogonal follows as in the Laguerre case.
For $\jb = \kb$, we use substitution $x=ty$ 
and integrate by parts with respect to $t$ to obtain 
\begin{align*}
\left \langle \sU^{\sJ}_{\kb,n}, \sV^{\sJ}_{\kb,n} \right \rangle 
 = \, & (-1)^k \kb! b_\mu^\BB \int_{\mathbb{B}^{d}} (1- \|y\|^2)^{k+\mu -\frac12}\d y \\
 & \times \, c_{2\mu + \b+ d-1, \g} \int_{0}^1 t^{n- k}
 \frac{\partial ^{n-k}}{\partial t^{n-k}} \left[(1-t)^{\g+n-k} t^{n+k+2\mu +\beta +d-1}\right] \d t \\
 = \, & (-1)^n \kb! \frac{b_\mu^\BB}{b_{\mu+k}^\BB} 
 (n-k)! c_{2\mu + \b+ d-1, \g} \int_{0}^1 (1-t)^{\g+n-k} t^{n+k+2\mu +\beta +d-1} \d t \\
 = \, & (-1)^n \kb! (n-k)! \frac{(\mu+\frac{1}{2}) _{k}}{(\mu +\frac{d+1}{2}) _k}
 \frac{c_{2\mu + \b+ d-1, \g}}{c_{n+k+2\mu + \b+ d-1, n-k+\g}},
\end{align*}
where $c_{\a, \g}$ is defined in \eqref{eq:bJ}, from which \eqref{eq:biortho-J} follows. 
\end{proof}

We can also define a basis in terms of $\sU_{\kb,n}^\sJ$ so that it is biorthogonal with respect to the basis of 
$\sV_{\kb,n}^\sJ$. We state the result for the case $d = 1$ as in the Laguerre case. 

\begin{prop}
For $d =1$, define the polynomials $\{\sY_{n-k,n}^\sJ: 0 \le k \le n\}$ recursively as follows: for
$k= 0, 1$, $\sY_{n-k,n}^\sJ = \sU_{n-k, n}^\sJ$, and for $k =2, 3,\ldots,n$,
\begin{equation} \label{eq:sYknJ}
 \sY_{n-k,n}^\sJ (x,t) = \sU_{n-k,n}^{\sJ}- \sum_{1 \le i \le \lfloor k/2 \rfloor} d_{i,k} \sY_{n-k+2i,n}^\sJ
\end{equation}
recursively, where the coefficients are given by 
\begin{equation*}
d_{i,k}= \frac{k! (\mu +i+n-k+1)_{i} (\gamma +k-2i+1)_{2i}}
 {2^{2i}(k-2i)! i! (\mu +n-k+1/2)_{2i} (2\mu +\beta +2n-k+1)_{2i}}.
\end{equation*}%
Then $\{\sY_{n-k,n}^\sJ: 0 \le k \le n\}$ is a basis of $\CV_n(\VV^2,W_{\b,\g,\mu})$ and it is mutually 
biorthogonal to the basis $\{\sV_{k,n}^\sJ: 0 \le k \le n\}$.
\end{prop}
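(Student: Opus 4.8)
The plan is to mirror the proof of the corresponding Laguerre proposition. I would treat the coefficients $d_{i,k}$ in \eqref{eq:sYknJ} as unknowns and fix them by forcing the biorthogonality relations $I_{n-k,n-j} := \la \sY_{n-k,n}^\sJ, \sV_{n-j,n}^\sJ \ra = h_{n-k}\,\delta_{k,j}$, where $h_{n-k} := \la \sU_{n-k,n}^\sJ, \sV_{n-k,n}^\sJ \ra$ is the diagonal value already supplied by \eqref{eq:biortho-J} specialized to $d=1$. The cases $k=0,1$ hold trivially by \eqref{eq:biortho-J}, which gives $\la \sU_{n-k,n}^\sJ, \sV_{n-j,n}^\sJ\ra=0$ whenever $j\ge k$ with $j\ne k$, or $j<k$ with $k-j$ odd. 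For the induction, assume $\sY_{n-k+2i,n}^\sJ$ has been constructed for $1\le i\le\lfloor k/2\rfloor$; substituting \eqref{eq:sYknJ} into $I_{n-k,n-j}$ and applying the inductive biorthogonality shows that $I_{n-k,n-j}$ vanishes automatically unless $j=k$ (where it equals $h_{n-k}$) or $j=k-2i$, and in the latter case $I_{n-k,n-k+2i}=0$ is equivalent to $d_{i,k}= \la \sU_{n-k,n}^\sJ, \sV_{n-k+2i,n}^\sJ\ra / h_{n-k+2i}$. Once these $d_{i,k}$ are determined, the pairing matrix $\big(\la \sY_{n-k,n}^\sJ, \sV_{n-j,n}^\sJ\ra\big)_{k,j}$ is triangular with nonzero diagonal, so $\{\sY_{n-k,n}^\sJ:0\le k\le n\}$ is automatically a basis of $\CV_n(\VV^2,W_{\b,\g,\mu})$ biorthogonal to $\{\sV_{k,n}^\sJ\}$.

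The substantive step is the evaluation of the off-diagonal pairing $\la \sU_{m,n}^\sJ, \sV_{m+2i,n}^\sJ\ra$ for $d=1$ (which will be used with $m=n-k$). Since $\sU_{m,n}^\sJ$ is orthogonal to polynomials of lower degree, only the leading monomial $t^{n-m-2i}x^{m+2i}$ of $\sV_{m+2i,n}^\sJ$ contributes. Integrating by parts $m$ times in $x$ moves $\partial_x^m$ onto $x^{m+2i}$, producing $\tfrac{(m+2i)!}{(2i)!}\,x^{2i}$, and the substitution $x=ty$ then collapses the powers of $t$ exactly as in the Laguerre computation, leaving the product of $\int_{-1}^1 y^{2i}(1-y^2)^{m+\mu-1/2}\,\d y$ and $\int_0^1 t^{n-m}\,\partial_t^{n-m}\!\big(t^{n+m+\b+2\mu}(1-t)^{\g+n-m}\big)\,\d t$. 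Integrating the second integral by parts $n-m$ times reduces it to the Beta integral $\int_0^1 t^{n+m+\b+2\mu}(1-t)^{\g+n-m}\,\d t$, i.e.\ a reciprocal of one of the normalization constants $c_{\a,\g}$ from \eqref{eq:bJ}; here one checks that the boundary terms all vanish, using $\g>-1$ so that $(1-t)^{\g+n-m}$ and its first $n-m-1$ derivatives vanish at $t=1$, while the power of $t$ takes care of $t=0$.

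Setting $m=n-k$, reading off $h_{n-k+2i}$ from \eqref{eq:biortho-J}, and forming the ratio yields a product of Gamma factors that simplifies to the stated closed form for $d_{i,k}$. I expect the only real obstacle to be the Pochhammer-symbol bookkeeping in this last step: the factor $(\g+k-2i+1)_{2i}$ in the numerator of $d_{i,k}$, which is absent from the Laguerre coefficient $c_{i,k}$, arises as $\Gamma(\g+k+1)/\Gamma(\g+k-2i+1)$, that is, from the mismatch between the exponent $\g+n-m=\g+k$ of $(1-t)$ in the $t$-Beta integral above and the factor $(\g+1)_{k-2i}=(\g+1)_{n-m}$ occurring in $h_{n-k+2i}$; the remaining factors reproduce $c_{i,k}$ verbatim, since the $y$-integral and the $x$-integration by parts are identical to the Laguerre case.
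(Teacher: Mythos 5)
Your proposal is correct and follows exactly the route the paper intends: the paper's own ``proof'' simply defers to the Laguerre case (Proposition on $\sY_{n-k,n}^\sL$), and you have supplied precisely the omitted Jacobi-specific details --- the induction forcing $d_{i,k}=\la \sU_{n-k,n}^\sJ,\sV_{n-k+2i,n}^\sJ\ra/h_{n-k+2i}$, the Beta integral replacing the Gamma integral after integrating by parts in $t$, and the correct identification of the extra factor $(\g+k-2i+1)_{2i}$ as the ratio $\Gamma(\g+k+1)/\Gamma(\g+k-2i+1)$ coming from the mismatch between the exponent $\g+k$ in the off-diagonal pairing and the factor $(\g+1)_{k-2i}$ in $h_{n-k+2i}$. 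No gaps.
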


The proof is parallel to the Jacobi case, we shall omit the details. 
 
\section{Generating function for $U$-family and $V$-family}
\setcounter{equation}{0}
Like the case of the unit ball, the $U$-family of orthogonal polynomials on the cone satisfies a 
generating function. For the $V$-family, however, we do not have a generating function unless we 
use the relation in Proposition \ref{prop:sS-sV} to relate $\sV_{\kb,n}$ to $\sS_{\kb,n}$ first, since then
we can obtain a generating function for $\sV_{\kb,n}$ in terms of a generating function for $\sS_{\kb,n}$. 
We shall give the generating function for $\sV_{\kb,n}$ in this section as well. In both cases, the 
generating functions are given for the Laguerre polynomials and the Jacobi polynomials on the cone.

\subsection{Laguerre polynomials on the cone} 
The polynomials $U_{\kb,n}^\sL$ are indexed by $(\kb,n)$ with $\kb \in \NN_0^d$, $|\kb| \le n$ and 
$n \in \NN_0$. Its generating function is given below. 

\begin{prop}
For $\bb \in \BB^d$, $r> 0$ and $r < 1$, the generating function for $\sU_{\kb,n}^\sL$ is given by
\begin{align} \label{eq:generatUL}
 \sum_{n=0}^\infty & \sum_{|\kb| \le n} \frac{(-1)^{k} (2\mu)_{|\kb|}}{ 2^{|\kb|} (\mu+\f1 2)_{|\kb|}|\kb|!(n-|\kb|)!}
 \sU_{\kb,n}^\sL(x,t) \bb^\kb r^n \\
 & =\frac{(1-r)^{2\mu} \e^{-\frac{t r}{1-r}}}
 { (1-r)^{\b+d} \big( [(1-r)^2 - r \la \bb, x \ra]^2 + r^2 \|\bb\|^2 (t^2 - \|x\|^2) \big)^\mu }. \notag
\end{align}
\end{prop}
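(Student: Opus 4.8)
The plan is to substitute the Rodrigues formula of Definition~\ref{def:UknL} into the left-hand side of \eqref{eq:generatUL}, reindex the double sum by $j=n-|\kb|$, and recognize the two resulting factors as the classical one-variable Laguerre generating function and the ball generating function \eqref{eq:generatingB}, respectively.

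First I would fix $\kb\in\NN_0^d$ with $|\kb|=k$ and write $n=k+j$, $j\ge 0$. Inside $\sU_{\kb,n}^\sL(x,t)$ the $t$-differentiation is $\f{\partial^{j}}{\partial t^{j}}\big(t^{\gamma+j}\e^{-t}\big)$ with $\gamma:=2k+\b+2\mu+d-1$, and the Rodrigues formula for the Laguerre polynomials gives $\f{\partial^{j}}{\partial t^{j}}\big(t^{\gamma+j}\e^{-t}\big)=j!\,t^{\gamma}\e^{-t}L_j^{\gamma}(t)$. The factor $j!=(n-k)!$ cancels $\tfrac1{(n-k)!}$ in the coefficient, while $t^{\gamma}\e^{-t}$ combines with $\big(W_{\b,\mu}(x,t)\,t^{2k+2\mu+d-1}\big)^{-1}$ to leave precisely $(t^2-\|x\|^2)^{-\mu+\f12}$, which is independent of $j$. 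Summing over $j$ with the classical identity $\sum_{j\ge0}L_j^{\gamma}(t)r^{j}=(1-r)^{-\gamma-1}\e^{-tr/(1-r)}$ contributes the factor $(1-r)^{-2k-\b-2\mu-d}\e^{-tr/(1-r)}$, and the sub-factor $r^{k}(1-r)^{-2k}$ is absorbed into $\bb^{\kb}$ to form $\cb^{\kb}$ with $\cb:=\tfrac{r}{(1-r)^{2}}\bb$. Thus the left-hand side reduces to
\[
 \frac{\e^{-tr/(1-r)}}{(1-r)^{\b+2\mu+d}}\cdot\frac{1}{(t^2-\|x\|^2)^{\mu-\f12}}\sum_{\kb\in\NN_0^d}\frac{(-1)^{k}(2\mu)_{k}}{2^{k}(\mu+\f12)_{k}\,\kb!}\,\cb^{\kb}\,\f{\partial^{\kb}}{\partial x^{\kb}}(t^2-\|x\|^2)^{k+\mu-\f12}.
\]

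To evaluate the remaining $\kb$-sum I would homogenize by the substitution $x=ty$ (valid for $t>0$, the case $t=0$ following by continuity). Using $(t^2-\|x\|^2)^{k+\mu-\f12}=t^{2k+2\mu-1}(1-\|y\|^2)^{k+\mu-\f12}$ and $\f{\partial^{\kb}}{\partial x^{\kb}}=t^{-|\kb|}\f{\partial^{\kb}}{\partial y^{\kb}}$, and recalling the definition \eqref{eq:RodrigB} of $U_\kb$, the prefactor $(t^2-\|x\|^2)^{-\mu+\f12}$ cancels against the powers of $t$ and the factor $(1-\|y\|^2)^{-\mu+\f12}$ in the definition of $U_\kb$, so the sum collapses to $\sum_{\kb}U_\kb(y)\,(t\cb)^{\kb}$. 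By the ball generating function \eqref{eq:generatingB} this equals $\big[(1-\la t\cb,y\ra)^{2}+t^{2}\|\cb\|^{2}(1-\|y\|^{2})\big]^{-\mu}$, and undoing $x=ty$ turns it into $\big[(1-\la\cb,x\ra)^{2}+\|\cb\|^{2}(t^{2}-\|x\|^{2})\big]^{-\mu}$. Hence the left-hand side equals $\frac{\e^{-tr/(1-r)}}{(1-r)^{\b+2\mu+d}}\big[(1-\la\cb,x\ra)^{2}+\|\cb\|^{2}(t^{2}-\|x\|^{2})\big]^{-\mu}$; substituting $\cb=\tfrac{r}{(1-r)^{2}}\bb$ and extracting the common factor $(1-r)^{-4}$ from inside the bracket produces $(1-r)^{4\mu}$, and collecting all powers of $(1-r)$, with $4\mu-\b-2\mu-d=2\mu-\b-d$, gives exactly the right-hand side of \eqref{eq:generatUL}.

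The only substantive input is the ball generating function \eqref{eq:generatingB}, invoked after the homogenizing substitution; everything else is the Laguerre Rodrigues identity together with bookkeeping of the powers of $t$ and $(1-r)$ in the two scaling steps. The point to be careful about is the legitimacy of interchanging the two summations with the differentiations: this is clear for $(x,t)\in\VV^{d+1}$ and $r,\bb$ in a small neighbourhood of the origin, where the series converge absolutely, and the resulting identity between holomorphic functions then extends to the range stated in the proposition by analytic continuation.
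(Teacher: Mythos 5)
Your proposal is correct and follows essentially the same route as the paper: factor the Rodrigues formula into a one-variable Laguerre part and a ball part via $x=ty$, sum the Laguerre generating function over $n$ first, and then apply the ball generating function \eqref{eq:generatingB} with the rescaled argument $t r\,\bb/(1-r)^2$; your power counting in $t$ and $(1-r)$ checks out. (Note that your use of $\kb!$ rather than $|\kb|!$ in the coefficient is the one consistent with \eqref{eq:RodrigB} and \eqref{eq:generatingB}, which points to a typo in the displayed coefficient of the proposition; your added remark on convergence and analytic continuation is a welcome extra.)
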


\begin{proof}
Changing variable $x = t y$, we can write 
$$
\frac{1}{(t^2 -\|x\|^2)^{\mu-\f12} }
\frac{\partial^{\kb}}{\partial x^\kb}(t^2 -\|x\|^2)^{k+\mu-\f12} = 
\frac{t^k }{(1 -\|y\|^2)^{\mu-\f12}} \f{\partial^{\kb}}{\partial y^\kb}(1 -\|y\|^2)^{k+\mu-\f12}.
$$
Consequently, using the Rodrigues formulas for $U_\kb$ in \eqref{eq:RodrigB} and the Laguerre polynomial, 
we see that the polynomial $\sU_{\kb,n}^\sL$ can be written as 
$$
 \sU_{k,n}^\sL(x,t) = \frac{2^{|\kb|} (\mu+\f1 2)_{|\kb|} !\kb|! (n-|\kb|)!}{(-1)^k (2\mu)_{|\kb|}}
 t^k U_{\kb}\left( \frac{x}{t} \right) L_{n-k}^{2k + \a}(t), 
$$
where $\a = 2\mu + \b+ d-1$. Hence, using the generating formula for the Laguerre polynomials first, we see that the 
left-hand side of \eqref{eq:generatUL} is equal to 
\begin{align*}
 \mathrm{LHS} \, & = \sum_{n=0}^\infty \sum_{k=0}^n \sum_{|\kb|= k} t^k U_\kb\left( \frac{x}{t} \right)
 L_{n-k}^{2k + \a}(t) \bb^\kb r^n \\
 & = \sum_{k=0}^\infty \sum_{|\kb|= k} t^k U_\kb \left( \frac{x}{t} \right) \bb^\kb 
 \sum_{n=k}^\infty L_{n-k}^{2k + \a}(t) r^n \\
 & = \sum_{k=0}^\infty \sum_{|\kb|= k} t^k U_\kb \left( \frac{x}{t} \right) \bb^\kb r^k 
 \frac{1}{(1-r)^{2k+\a+1}} \e^{\frac{- rt}{1-r}}.
\end{align*}
Now, applying the generating formula for $U_\kb$ in \eqref{eq:generatingB} with $\ab = t r |\bb|/(1-r)^2$, we obtain
\begin{align*}
 \mathrm{LHS} = \frac{1}{ \big( ( 1- \frac{r}{(1-r)^2} \la \bb,x \ra)^2 + r^2 \frac{\|\bb\|^2}{(1-r)^4} (t^2 - \|x\|^2) \big)^\mu}
 \frac{1}{(1-r)^{\a+1}} \e^{\frac{- rt}{1-r}},
\end{align*}
which becomes \eqref{eq:generatUL} after rearranging the terms. 
\end{proof}

We now give a generating function for $\sS_{\kb,n}^\sL$ given by 
\begin{align*} 
 \sS_{\kb,n}^\sL (x,t):= \wh L_{n-k}^{2k+2\mu+\b+d-1}(t) t^{k} V_\kb \left(\f{x}{t} \right), \quad |\kb| = k, \,\,
 \kb \in \NN_0^d, \,\, 0\le k \le n. 
\end{align*}
Using the generating function \eqref{eq:generatingBV} in place of~\eqref{eq:generatingB}, the generating function
can be derived exactly as in the proof for \eqref{eq:generatUL}. The result is the following.

\begin{prop}
For $\bb \in \BB^d$, $r> 0$ and $r < 1$, the generating function for $\sS_{\kb,n}^\sL$ is given by
\begin{align*} 
 \sum_{n=0}^\infty & \sum_{|\kb \le n} \frac{(-1)^{n-|\kb|} 2^{|\kb|} (\mu+\f{d-1}2)_{|\kb|}} { (n-|\kb|)! \kb!}
 \sS_{\kb,n}^\sL(x,t) \bb^\kb r^n \\
 & =\frac{(1-r)^{2\mu+d} \e^{-\frac{t r}{1-r}}}
 { (1-r)^{\b+2} \big( (1-r)^4 - 2 r (1-r)^2 \la \bb, x \ra +t^2 r^2 \|\bb\|^2 \big)^{\mu+\frac{d-1}2}}. \notag
\end{align*}
\end{prop}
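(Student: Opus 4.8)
The plan is to mimic exactly the proof just given for the generating function of $\sU_{\kb,n}^\sL$, but with the roles of the ball generating functions \eqref{eq:generatingB} and \eqref{eq:generatingBV} interchanged. First I would observe that, by the definition of $\sS_{\kb,n}^\sL$, the coefficient sum on the left-hand side is
\begin{align*}
 \mathrm{LHS} = \sum_{k=0}^\infty \sum_{|\kb|=k} (-1)^{-k} 2^{|\kb|}\left(\mu+\tfrac{d-1}2\right)_{|\kb|}\frac{t^k}{\kb!} V_\kb\!\left(\tfrac{x}{t}\right)\bb^\kb \sum_{n=k}^\infty \frac{(-1)^{n}}{(n-k)!}\wh L_{n-k}^{2k+2\mu+\b+d-1}(t)\, r^n,
\end{align*}
after swapping the order of summation (legitimate for $|r|<1$ as all series converge absolutely in that range). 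Recalling $\wh L_{m}^{\a}=(-1)^m m!\, L_m^{\a}$, the inner sum over $n$ becomes $r^k\sum_{m=0}^\infty L_m^{2k+2\mu+\b+d-1}(t)\,r^m$, which by the classical Laguerre generating function $\sum_{m\ge0}L_m^{\a}(t)z^m=(1-z)^{-\a-1}\e^{-tz/(1-z)}$ equals $r^k(1-r)^{-(2k+\a+1)}\e^{-tr/(1-r)}$ with $\a=2\mu+\b+d-1$.

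Next I would factor out the $k$-independent piece $(1-r)^{-\a-1}\e^{-tr/(1-r)}$ and collect the remaining $k$-dependent terms:
\begin{align*}
 \mathrm{LHS} = \frac{\e^{-tr/(1-r)}}{(1-r)^{\a+1}}\sum_{k=0}^\infty \sum_{|\kb|=k}\frac{2^{|\kb|}}{\kb!}\left(\mu+\tfrac{d-1}2\right)_{|\kb|} V_\kb\!\left(\tfrac{x}{t}\right)\left(\frac{t r}{(1-r)^2}\right)^{k}\left(\frac{\bb}{t}\right)^{\!\kb} t^{k},
\end{align*}
where I have rewritten $t^k\bb^\kb = (tr/(1-r)^2)^{k}(1-r)^{2k}r^{-k}\bb^\kb$; more cleanly, the argument of $V_\kb$ is $x/t$ and the monomial weight is $\ab^\kb$ with $\ab := \tfrac{r}{(1-r)^2}\,\bb$ (scalar times vector), since $t^{|\kb|}V_\kb(x/t)\,\bb^\kb \cdot (\text{scalar})^{|\kb|}$ has exactly the homogeneity needed so that $\ab^\kb \cdot$ (the $V$-argument $x/t$) produces the terms of \eqref{eq:generatingBV}. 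Then the double sum over $k$ and $|\kb|=k$ is precisely the right-hand side of \eqref{eq:generatingBV} evaluated at the point $x/t\in\BB^d$ with parameter vector $\ab = \tfrac{r}{(1-r)^2}\bb$, namely
\begin{align*}
 \sum_{\kb\in\NN_0^d}\frac{2^{|\kb|}}{\kb!}\left(\mu+\tfrac{d-1}2\right)_{|\kb|} V_\kb\!\left(\tfrac{x}{t}\right)\ab^\kb = \frac{1}{\left(1-2\la\ab,\tfrac{x}{t}\ra+\|\ab\|^2\right)^{\mu+\frac{d-1}2}}.
\end{align*}

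Finally I would substitute $\ab=\tfrac{r}{(1-r)^2}\bb$, so that $1-2\la\ab,x/t\ra+\|\ab\|^2 = 1-\tfrac{2r}{t(1-r)^2}\la\bb,x\ra+\tfrac{r^2\|\bb\|^2}{(1-r)^4}$, and multiply numerator and denominator inside the power by $(1-r)^4$ (and adjust the prefactor $t^{2(\mu+\frac{d-1}2)}$ to clear the $t$ in the denominator), obtaining the stated closed form with denominator $\big((1-r)^4-2r(1-r)^2\la\bb,x\ra+t^2r^2\|\bb\|^2\big)^{\mu+\frac{d-1}2}$ and the collected $(1-r)$-powers $(1-r)^{2\mu+d}/(1-r)^{\b+2}$ coming from combining $(1-r)^{-\a-1}=(1-r)^{-2\mu-\b-d}$ with the $(1-r)^{2(\mu+\frac{d-1}2)}=(1-r)^{2\mu+d-1}$ introduced in clearing denominators, up to the single remaining factor. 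The only real work is this bookkeeping of powers of $(1-r)$ and $t$; there is no genuine obstacle, since every analytic ingredient (the Laguerre generating function, \eqref{eq:generatingBV}, and absolute convergence for $|r|<1$, $\bb\in\BB^d$) is already available, and the argument is structurally identical to the preceding proposition. I would simply remark at the end that the interchange of the order of summation and the substitution into \eqref{eq:generatingBV} are justified exactly as in the proof of \eqref{eq:generatUL}.
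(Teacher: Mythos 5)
Your proposal follows exactly the route the paper intends (and only sketches): expand $\sS_{\kb,n}^\sL$ via the monic Laguerre polynomials, sum the Laguerre generating function in $n$ first, and then invoke \eqref{eq:generatingBV} in place of \eqref{eq:generatingB}, so the approach is the same and the argument is sound. One bookkeeping correction: the substitution into \eqref{eq:generatingBV} should be $\ab=\tfrac{tr}{(1-r)^2}\,\bb$ (as in the paper's proof of \eqref{eq:generatUL}), not $\tfrac{r}{(1-r)^2}\,\bb$; with that choice $\la\ab,x/t\ra=\tfrac{r}{(1-r)^2}\la\bb,x\ra$ and $\|\ab\|^2=\tfrac{t^2r^2}{(1-r)^4}\|\bb\|^2$, so clearing $(1-r)^4$ yields the stated denominator directly, no residual power of $t$ needs to be "adjusted," and the $(1-r)$-count is $(1-r)^{4\mu+2d-2}\cdot(1-r)^{-(2\mu+\b+d)}=(1-r)^{2\mu+d}/(1-r)^{\b+2}$ as claimed.
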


\subsection{Jacobi polynomials on the cone} 
Similarly to the Laguerre case, the generating function for the $U$-family of the Jacobi polynomials on the cone 
is given as follows. 

\begin{prop}
For $\bb \in \BB^d$, $r> 0$ and $r < 1$, the generating function for $\sU_{\kb,n}^\sJ$ is given by
\begin{align} \label{eq:generatUJ}
 \sum_{n=0}^\infty & \sum_{|\kb| \le n} \frac{(-1)^k (2\mu)_{|\kb|}}{ 2^{|\kb|} (\mu+\f1 2)_{|\kb|} |\kb|!(n-|\kb|)!}
 \sU_{\kb,n}^\sJ(x,t) \bb^\kb r^n \\
 & = \frac{2^{2\mu+\b+\g+d-1} R^{-1} (1+r+R)^{-\g} (1-r+R)^{2\mu - \b - d +1}} 
 { \big( [(1-r+R)^2 - 4 r \la \bb, x \ra]^2 + 16 r^2 \|\bb\|^2 (t^2 - \|x\|^2) \big)^\mu }, \notag
\end{align}
where 
$$
 R =\left[ (1-r)^2 + 4 r t\right ] ^{\frac12}. 
$$

\end{prop}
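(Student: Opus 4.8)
The plan is to mimic the proof of the generating function \eqref{eq:generatUL} for the Laguerre case. The key point is that, after the substitution $x = ty$, the two-variable Rodrigues formula defining $\sU_{\kb,n}^\sJ$ splits into a product of the ball Rodrigues polynomial $U_\kb(x/t)$ (times $t^{|\kb|}$) and a univariate Jacobi polynomial in $t$; one then sums over $n$ using the classical generating function for Jacobi polynomials and over $\kb$ using \eqref{eq:generatingB}.

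\emph{Reduction step.} Write $\alpha := 2\mu+\beta+d-1$ and start from the factored form of $\sU_{\kb,n}^\sJ$ used in the proof that $\{\sU_{\kb,n}^\sJ\}$ is a basis. The substitution $x = ty$ turns the $x$-factor into $t^{|\kb|}$ times the ball Rodrigues expression, so that by \eqref{eq:RodrigB}
$$
\frac{1}{(t^2-\|x\|^2)^{\mu-\f12}}\frac{\partial^\kb}{\partial x^\kb}(t^2-\|x\|^2)^{|\kb|+\mu-\f12}
= \frac{2^{|\kb|}(\mu+\f12)_{|\kb|}\,\kb!}{(-1)^{|\kb|}(2\mu)_{|\kb|}}\;t^{|\kb|}\,U_\kb\!\Big(\frac{x}{t}\Big),
$$
while the remaining $t$-factor is a Rodrigues formula on $[0,1]$; substituting $t\mapsto 1-2t$ in the classical Jacobi Rodrigues formula gives, with $k=|\kb|$,
$$
\frac{1}{t^{2k+\alpha}(1-t)^\gamma}\frac{\partial^{n-k}}{\partial t^{n-k}}\Big[t^{n+k+\alpha}(1-t)^{\gamma+n-k}\Big]
= (n-k)!\,P_{n-k}^{(2k+\alpha,\gamma)}(1-2t).
$$
Hence $\sU_{\kb,n}^\sJ(x,t) = C_{\kb,n}\,t^{k}U_\kb(x/t)\,P_{n-k}^{(2k+\alpha,\gamma)}(1-2t)$ for an explicit constant $C_{\kb,n}$, and I would first check (a short Pochhammer/factorial computation) that the prefactor of $\sU_{\kb,n}^\sJ$ in \eqref{eq:generatUJ} is exactly $C_{\kb,n}^{-1}$. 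With that, the left-hand side of \eqref{eq:generatUJ} becomes $\sum_{n\ge0}\sum_{|\kb|\le n} t^{k}U_\kb(x/t)\,P_{n-k}^{(2k+\alpha,\gamma)}(1-2t)\,\bb^\kb r^n$.

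\emph{Summation step.} Fix $\kb$ with $|\kb|=k$, set $m=n-k$, and use $\sum_{m\ge0}P_m^{(a,b)}(u)r^m = 2^{a+b}R^{-1}(1-r+R)^{-a}(1+r+R)^{-b}$ with $R=(1-2ur+r^2)^{1/2}$, evaluated at $u=1-2t$, $a=2k+\alpha$, $b=\gamma$, so that $R=\big((1-r)^2+4rt\big)^{1/2}$ as in the statement. The factors depending on $k$, namely $t^k$ from the reduction, $r^k$ from the index shift, and $4^k(1-r+R)^{-2k}$, collapse into the single scalar $\big(4rt/(1-r+R)^2\big)^k$, so that
$$
\mathrm{LHS} = \frac{2^{\alpha+\gamma}}{R\,(1-r+R)^{\alpha}(1+r+R)^{\gamma}}\sum_{\kb\in\NN_0^d}U_\kb\!\Big(\frac{x}{t}\Big)\,\ab^\kb,
\qquad \ab:=\frac{4rt}{(1-r+R)^2}\,\bb .
$$
Applying \eqref{eq:generatingB} with this $\ab$ and $y=x/t$, and using $\la\ab,x/t\ra=4r\la\bb,x\ra/(1-r+R)^2$ and $\|\ab\|^2(1-\|x/t\|^2)=16r^2\|\bb\|^2(t^2-\|x\|^2)/(1-r+R)^4$, the sum over $\kb$ equals
$$
\frac{(1-r+R)^{4\mu}}{\big([(1-r+R)^2-4r\la\bb,x\ra]^2+16r^2\|\bb\|^2(t^2-\|x\|^2)\big)^\mu}.
$$
Since $4\mu-\alpha=2\mu-\beta-d+1$ and $\alpha+\gamma=2\mu+\beta+\gamma+d-1$, this is exactly \eqref{eq:generatUJ}.

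The main thing requiring care is convergence and the interchange of the two summations: for $r$ and $\|\bb\|$ small enough, the univariate Jacobi series (for each fixed $k$) and the ball series \eqref{eq:generatingB} with the above $\ab\in\BB^d$ both converge absolutely, which legitimizes all rearrangements, and the identity then extends to all $\bb\in\BB^d$ and $0<r<1$ by analytic continuation, since there $R>0$, $1\pm r+R>0$, and the expression in the denominator is strictly positive (it equals $(1-r+R)^4$ times one of the form $(1-\la\ab,y\ra)^2+\|\ab\|^2(1-\|y\|^2)$ with $y\in\BB^d$). Apart from this, the only nonroutine step is pinning down $C_{\kb,n}$ and verifying the promised collapse of the $k$-dependent factors into $\ab$; everything else is the same computation as for \eqref{eq:generatUL}.
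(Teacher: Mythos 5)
Your proposal is correct and follows essentially the same route as the paper's own proof: factor $\sU_{\kb,n}^\sJ$ via $x=ty$ into $t^{|\kb|}U_\kb(x/t)$ times $P_{n-|\kb|}^{(2|\kb|+\alpha,\gamma)}(1-2t)$, sum over $n$ with the classical Jacobi generating function, and then apply \eqref{eq:generatingB} with $\ab = 4rt\,\bb/(1-r+R)^2$. The constant identification, the collapse of the $k$-dependent factors, and the final exponent bookkeeping all check out, and your added remarks on convergence and analytic continuation are a welcome bit of extra care the paper omits.
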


\begin{proof}
Changing variable $x = t y$, we can rewrite the polynomial $\sU_{\kb,n}^J$ as 
$$
 \sU_{k,n}^\sJ(x,t) = \frac{2^{|\kb|} (\mu+\f1 2)_{|\kb|} !\kb|! (n-|\kb|)!}{(-1)^k (2\mu)_{|\kb|}}
 t^k U_{\kb}\left( \frac{x}{t} \right) P_{n-k}^{(2k + \a, \g)}(1-2t), 
$$
where $\a = 2\mu + \b+ d-1$. We now use the generating formula for the Jacobi polynomials, given by
$$
 \sum_{n=0}^\infty P_n^{(\a,\b)}(1-2t) r^n = 2^{\a+\b} R^{-1} (1-r+R)^{-\a} (1+r+R)^{-\b}, 
$$
with $R$ defined as in the statement, which shows that the left-hand side of \eqref{eq:generatUJ} is equal to 
\begin{align*}
 \mathrm{LHS} \, & = \sum_{n=0}^\infty \sum_{k=0}^n \sum_{|\kb|= k} t^k U_\kb\left( \frac{x}{t} \right)
 P_{n-k}^{(2k + \a,\g)}(1-2t) \bb^\kb r^n \\
 & = \sum_{k=0}^\infty \sum_{|\kb|= k} t^k U_\kb \left( \frac{x}{t} \right) \bb^\kb 
 \sum_{n=k}^\infty P_{n-k}^{(2k + \a,\g)} (1-2t) r^n\\ 
 & = \sum_{k=0}^\infty \sum_{|\kb|= k} t^k U_\kb \left( \frac{x}{t} \right) \bb^\kb r^k 
 2^{2k+\a+\g} R^{-1} (1-r+R)^{- 2k-\a} (1+r+R)^{-\g} .
\end{align*}
Now, we apply the generating formula for $U_\kb$ in \eqref{eq:generatingB} with $\ab = 4 t r |\bb|/(1-r+R)^2$
and simplify the result to complete the proof. 
\end{proof}
 
Similarly, as in the Laguerre case, we can give a generating function for $\sS_{\kb,n}^\sJ$, 
\begin{align} \label{eq:S-OPconeJ}
 \sS_{\kb,n}^\sJ (x,t):= \wh P_{n-k}^{(2k+2\mu+\b+d-1,\g)}(1-2t) t^{k} V_\kb \left(\f{x}{t} \right), 
\end{align}
for $|\kb| = k$, $\kb \in \NN_0^d$ and $0\le k \le n$. Using the generating function \eqref{eq:generatingBV} 
in place of~\eqref{eq:generatingB}, the proof follows exactly as that of \eqref{eq:generatUL}. The result 
is the following. 

\begin{prop}
For $\bb \in \BB^d$, $r> 0$ and $r < 1$, the generating function for $\sS_{\kb,n}^\sJ$ is given by
\begin{align*} 
 \sum_{n=0}^\infty \sum_{|\kb| \le n} & \frac{(-1)^{n-k} 2^{|\kb|} (\mu+\f{d-1} 2)_{|\kb|} (n+k+2\mu+\b+\g+1)_{n-k}}
 {(n-|\kb|)! \kb!}
 \sS_{\kb,n}^\sJ(x,t) \bb^\kb r^n \\
 & = \frac{2^{2\mu+\b+\g+d-1} R^{-1} (1+r+R)^{-\g} (1-r+R)^{2\mu +d - \b -1}} 
 { \big( (1-r+R)^4 - 8 r (1-r+R)^2 \la \bb, x \ra + 16 r^2 t^2 \|\bb\|^2 \big)^{\mu +\frac{d-1}{2} }}, \notag
\end{align*}
where $R$ is the same as in \eqref{eq:generatUJ}.
\end{prop}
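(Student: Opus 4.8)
The plan is to reproduce the argument that proves \eqref{eq:generatUL}--\eqref{eq:generatUJ} essentially line by line, with the $V$-basis generating function \eqref{eq:generatingBV} on the ball replacing the $U$-basis one \eqref{eq:generatingB}, plus one extra preliminary step to pass from the \emph{monic} Jacobi polynomial occurring in $\sS_{\kb,n}^\sJ$ to its classical normalization. First I would change variables $x = ty$, so that $t^{|\kb|}V_\kb(\tfrac{x}{t})$ becomes a homogeneous polynomial of degree $|\kb|$ in $y$, and rewrite $\wh P_{n-k}^{(2|\kb|+2\mu+\b+d-1,\g)}(1-2t)$ in terms of the classical $P_{n-k}^{(2|\kb|+2\mu+\b+d-1,\g)}(1-2t)$ via $\wh P_m^{(a,b)}(1-2t)=\frac{(-1)^m m!}{(a+b+m+1)_m}P_m^{(a,b)}(1-2t)$ with $m=n-|\kb|$, $a=2|\kb|+2\mu+\b+d-1$, $b=\g$. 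The coefficient multiplying $\sS_{\kb,n}^\sJ$ in the asserted identity is chosen precisely so that, after this substitution, the sign $(-1)^{n-|\kb|}$, the factorial $(n-|\kb|)!$, and the Pochhammer ratio $(a+b+m+1)_m$ all cancel, leaving the left side equal to $\sum_{n\ge0}\sum_{|\kb|\le n}\frac{2^{|\kb|}(\mu+\tfrac{d-1}{2})_{|\kb|}}{\kb!}\,t^{|\kb|}V_\kb(\tfrac{x}{t})\,P_{n-|\kb|}^{(2|\kb|+2\mu+\b+d-1,\g)}(1-2t)\,\bb^\kb r^n$.

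Next, exactly as in the proof of \eqref{eq:generatUJ}, I would fix $k=|\kb|$, re-index the inner sum over $n\ge k$ by $j=n-k\ge0$, pull out the factor $(tr)^k$, and apply the Jacobi generating function $\sum_{j\ge0}P_j^{(a,b)}(1-2t)r^j=2^{a+b}R^{-1}(1-r+R)^{-a}(1+r+R)^{-b}$ with $a=2k+2\mu+\b+d-1$, $b=\g$, and $R=[(1-r)^2+4rt]^{1/2}$. Peeling off the $k$-independent factor $2^{2\mu+\b+\g+d-1}R^{-1}(1-r+R)^{-(2\mu+\b+d-1)}(1+r+R)^{-\g}$, the remaining $k$-dependent pieces collect into $\left(\tfrac{4tr}{(1-r+R)^2}\right)^{k}$, so the residual double sum over $\kb$ is $\sum_\kb\frac{2^{|\kb|}}{\kb!}(\mu+\tfrac{d-1}{2})_{|\kb|}V_\kb(\tfrac{x}{t})\,\ab^\kb$ with $\ab=\tfrac{4tr}{(1-r+R)^2}\bb$. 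By \eqref{eq:generatingBV} this equals $\bigl(1-2\la\ab,\tfrac{x}{t}\ra+\|\ab\|^2\bigr)^{-(\mu+\frac{d-1}{2})}$. Finally I would simplify $1-2\la\ab,\tfrac{x}{t}\ra+\|\ab\|^2=(1-r+R)^{-4}\bigl((1-r+R)^4-8r(1-r+R)^2\la\bb,x\ra+16r^2t^2\|\bb\|^2\bigr)$ and combine exponents: the $(1-r+R)^{4(\mu+\frac{d-1}{2})}=(1-r+R)^{2(2\mu+d-1)}$ coming out of the bracket times the prefactor power $(1-r+R)^{-(2\mu+\b+d-1)}$ gives $(1-r+R)^{2\mu+d-\b-1}$, while $2^{2\mu+\b+\g+d-1}$ is already the prefactor constant; this is precisely the right-hand side of the proposition.

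I do not anticipate a genuine obstacle: the content is the proof of \eqref{eq:generatUJ} with \eqref{eq:generatingBV} substituted for \eqref{eq:generatingB} and the single monic-to-classical conversion inserted at the start. The only delicate part is clerical — keeping track of the powers of $2$ and of $(1-r+R)$, and of the various Pochhammer symbols, so that the constant $2^{2\mu+\b+\g+d-1}$ and the exponent $2\mu+d-\b-1$ emerge exactly — together with the routine analytic justification: for $\bb\in\BB^d$ and $0<r<1$ one checks $\frac{4tr}{(1-r+R)^2}\|\bb\|<1$ on $\VV^{d+1}$ (using $\|x/t\|\le1$ and $t\le1$), so that $\ab\in\BB^d$ and \eqref{eq:generatingBV} applies, which, with $r$ small, also legitimises the interchange of the $n$- and $k$-summations and the use of the Jacobi generating series.
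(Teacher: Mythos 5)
Your proposal is correct and is exactly the route the paper intends: its entire ``proof'' of this proposition is the remark that one repeats the argument for \eqref{eq:generatUJ} with the $V$-generating function \eqref{eq:generatingBV} in place of \eqref{eq:generatingB}, and your accounting of the powers of $2$ and of $(1-r+R)$ (the $4(\mu+\tfrac{d-1}{2})$ from the bracket against the $-(2\mu+\b+d-1)$ from the Jacobi series, giving $2\mu+d-\b-1$) reproduces the stated right-hand side. Two caveats, both traceable to the paper's own normalizations rather than to your reasoning: the conversion you quote, $\wh P_m^{(a,b)}(1-2t)=\frac{(-1)^m m!}{(a+b+m+1)_m}P_m^{(a,b)}(1-2t)$, is the polynomial monic in $t$ (i.e.\ the $q_{n-k}$ of \eqref{eq:S-OPcone}, as in the paper's $d=1$ Jacobi subsection), not the literal $\wh P$ of \eqref{eq:S-OPconeJ}, which is monic in $1-2t$ and would leave an uncancelled factor $(-2)^{n-k}$; and the Pochhammer you cancel, $(a+b+m+1)_m=(n+k+2\mu+\b+\g+d)_{n-k}$, agrees with the statement's $(n+k+2\mu+\b+\g+1)_{n-k}$ only for $d=1$, so for $d>1$ your computation proves the identity with $+d$ in place of $+1$ there. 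Under those (evidently intended) readings the argument is complete, including the justification that $4tr/(1-r+R)^2<1$ so that \eqref{eq:generatingBV} applies.
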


\end{document}